\newtheorem{theorem}{Theorem}[section]
\newtheorem{lemma}{Lemma}[section]
\newtheorem{remark}{Remark}[section]
\def\today{\number\day \space\ifcase\month\or
  January\or February\or March\or April\or May\or June\or
  July\or August\or September\or October\or November\or December\fi
  \space\number\year}
\begin{document}
%\begin{linenumbers}
\begin{center}  
{\Large \bf Backward Bifurcation and Control in Transmission Dynamics of Arboviral Diseases}
\end{center}

\smallskip
\begin{center}
{\small \textsc{Hamadjam Abboubakar}\footnote{Corresponding author. Present Adress:  UIT--Department of Computer Science, P.O. Box 455,  Ngaoundere, Cameroon, email: abboubakarhamadjam@yahoo.fr, Tel. (+237) 694 52 31 11}$^{,2}$, 
\textsc{Jean Claude Kamgang}$^{2}$, \textsc{Daniel Tieudjo}$^{2}$}
\end{center}
\begin{center} {\small \sl $^{1}$ The University of Ngaoundere, UIT, Laboratoire d'Analyse, Simulation et Essai, P.O. Box 455 Ngaoundere, Cameroon}\\
{\small \sl $^{2}$ The University of Ngaoundere, ENSAI, Laboratoire de Math\'ematiques Exp\'erimentales, P. O. Box 455 Ngaoundere, Cameroon
}
\end{center}

\bigskip
{\small \centerline{\bf Abstract} 
In this paper, we derive and analyze a compartmental model for the control of arboviral diseases which takes into account an imperfect vaccine combined with individual protection and some vector control strategies already studied in the literature. After the formulation of the model, a qualitative study based on stability analysis and bifurcation theory reveals that the phenomenon of backward bifurcation may occur. The stable disease-free equilibrium of the model coexists with a stable endemic equilibrium when the reproduction number, $R_0$, is less than unity. Using Lyapunov function theory, we prove that the trivial equilibrium is globally asymptotically stable; When the disease--induced death is not considered, or/and, when the standard incidence is replaced by the mass action incidence, the backward bifurcation does not occur. Under a certain condition, we establish the global asymptotic stability of the disease--free equilibrium of the full model. Through sensitivity analysis, we determine the relative importance of model parameters for disease transmission. Numerical simulations show that the combination of several control mechanisms would significantly reduce  the spread of the disease, if we maintain the level of each control high,  and this, over a long period.
}
\medskip

\noindent {\bf Keywords}: Compartmental model, Arboviral diseases, Vaccination, Vector control strategies, Stability, Bifurcation, Sensitivity analysis.
\medskip

\noindent {\bf AMS Subject Classification (2010)}: 34D20, 34D23, 37N25, 92D30.

%%%%%%%%%%%%%%%%%%%%%%%%%%%%%%%%%%%%%%%%%%%%%%%%%%%%%%%%%%%%%%

\section{Introduction}
\label{sec:IntAr3opc}
Arboviral diseases are affections transmitted by hematophagous arthropods. There are currently 534 viruses registered in the International Catalog of Arboviruses and 25\% of them have caused documented illness in human populations \cite{ch,ka,gu}. Examples of those kinds of diseases are dengue, yellow fever, Saint Louis fever, encephalitis, West Nile fever and chikungunya. A wide range of arbovirus diseases are transmitted by mosquito bites and constitute a public health emergency of international concern. According to WHO, dengue, caused by any of four closely-related virus serotypes (DEN-1-4) of the genus Flavivirus, causes 50--100 million infections worldwide every year, and the majority of patients worldwide are children aged 9 to 16 years \cite{Sanofi2013,WHO2006,WHO2009}. The dynamics of arboviral diseases like dengue or chikungunya are influenced by many factors such as human and mosquito behavior, the virus itself, as well as the environment which  directly or indirectly affects all the present mechanisms of control. 

For all  mentioned diseases, only yellow fever has a licensed vaccine. Nevertheless, considerable efforts are made to obtain the vaccines for other diseases. In the case of Dengue for example, the scientists of french laboratory SANOFI have conducted different tries in Latin America and Asia. Thus, a tetravalent vaccine could be quickly set up in the coming months. But in any case, it is clear that this vaccine will be imperfect. However, the tries in Latin America have shown that vaccine efficacy was 64.7\%. Serotype-specific vaccine efficacy was 50.3\% for serotype 1, 42.3\% for serotype 2, 74.0\% for serotype 3, and 77.7\% for serotype 4~\cite{Villar2015}. The tries in Asia have shown that efficacy was 30.2\%, and differed by serotype~\cite{Arunee2012}.

Host-vector models for arboviral diseases transmission were proposed in \cite{Aldila2013,Antonio2001,caga,coutinho2006,cretal,Derouich2006,duch,esva98,esva99,fevh,gaguab,HelenaSofiaRodrigues2014,BlaynehaetAl,maya,moaaca,moaaHee2012,poetal} with the focus on the construction of the basic reproductive ratio and related stability analysis of the disease free and endemic equilibria. Some of these works in the literature focus on modeling the spread of arboviral diseases and its control using some mechanism of control like imperfect vaccines \cite{gaguab,HelenaSofiaRodrigues2014} and other control tools like individual protection and vector control strategy \cite{Aldila2013,Antonio2001,duch,BlaynehaetAl,moaaca,moaaHee2012}.

In \cite{duch}, Dumont and Chiroleu proposed a compartmental model to study the impact of vector control methods used to contain or stop the epidemic of Chikungunya of 2006 in R\'eunion island.  Moulay et {\it al.} \cite{moaaca} study an optimal control based on protection and vector control strategies to fight against Chikungunya. In \cite{HelenaSofiaRodrigues2014}, Rodrigues et {\it al.} simulate an hypothetical vaccine as an extra protection to the human population against epidemics of Dengue, using the optimal control. In these models \cite{duch,HelenaSofiaRodrigues2014,moaaca},
\begin{itemize}
\item[(i)] the population is constant, 
\item[(ii)] the   disease-induced death in humans is not considered,
\item[(iii)] the complete stage progression of development of vectors is not considered,
\item[(iv)] none of the above  mentioned models takes into account the combination of the mechanisms of control already studied in the literature, such as vaccination, individual protection and vector control strategies (destruction of breeding site, eggs and larvae reduction).
\end{itemize} 

The aim of this work is to propose and study a arboviral disease control model which takes into account human immigration, disease--induced mortality in human communities, the complete stage structured model for vectors and a combination of human vaccination, individual protection and vector control strategies to fight against the spread of these kind of diseases. 

We start with the formulation of the model, which is an extension of the previous model study in \cite{AbbouEtAl2015}. We include the complete stage progression of development of vectors, the waning vaccine, and four other continuous controls (individual protection, using adulticides, the mechanical control, Eggs and larvae reduction). We compute the net reproductive number $\mathcal{N}$, as well as the basic reproduction number, $R_0$, and investigate the existence and stability of equilibria. We prove that the trivial equilibrium is globally asymptotically stable whenever $\mathcal{N}<1$. When $\mathcal{N}>1$ and $R_0<1$, we prove that the system exhibit the backward bifurcation phenomenon. The implication of this occurrence is that the classical epidemiological requirement for effective eradication of the disease, $R_0<1$, is no longer sufficient, even though necessary. However considering two situations: the model without vaccination and the model with mass incidence rates, we prove that the disease--induced death and the standard incidence functions, respectively, are the main causes of the occurrence of backward bifurcation.
We found that the disease--free equilibrium is globally asymptotically stable under certain condition. Through local and global sensitivity analysis, we determine the relative importance  parameters of the model on the disease transmission. By using the pulse control technique in numerical simulations, we evaluate the impact of different controls combinations on the decrease of the spread of these diseases.

The paper is organized as follows. In Section \ref{sec2ar3opc} we present the transmission model and in Section \ref{sec3ar3opc} we carry out some analysis by determining important thresholds such as the net reproductive number $\mathcal{N}$ and the basic reproduction number $R_0$, and different equilibria of the model. We then demonstrate the  stability of equilibria and carry out bifurcation analysis. In section \ref{SensitivityAr3}, both local and global sensitivity analysis are used to assess the important parameters in the spread of the diseases. Section \ref{secNumear3opc} is devoted to numerical simulations and discussion. A conclusion rounds up the paper.

\section{The formulation of the model}
\label{sec2ar3opc}
The model we propose here is based on the modelling approach given in \cite{AbbouEtAl2015,duch,esva98,esva99,fevh,gaguab,moaaca,moaaHee2012}. It is assumed that the human and vector populations are divided into compartments described by time--dependent state variables. The compartments in which the populations are divided are the following ones:

--For humans, we consider susceptible (denoted by $S_h$), vaccinated ($V_h$), exposed ($E_h$), infectious ($I_h$) and resistant or immune ($R_h$); So that, $N_h=S_h+V_h+E_h+I_h+R_h$. Following Garba et {\emph{al.}}~\cite{gaguab} and Rodrigues et {\emph{al.}}~\cite{HelenaSofiaRodrigues2014}, we assume that the immunity, obtained by the vaccination process, is temporary. So, the immunity has the waning rate $\omega$. The recruitment in human population is at the constant rate $\Lambda_h$, and newly recruited individuals enter the susceptible compartment $S_h$. Are concerned by recruitment people that are totally naive from the disease, and immune people whose immunity is lost. Each individual human compartment  goes out from the dynamics at natural mortality rates $\mu_h$. The human susceptible population is decreased following infection, which can be acquired via effective contact with an exposed or infectious vector at a rate $\lambda_h=\dfrac{a\beta_{hv}(\eta_vE_v+I_v)}{N_h}$ \cite{gaguab} where $a$ is the biting rate per susceptible vector, $\beta_{hv}$ is the transmission probability from an infected vector ($E_v$ or $I_v$) to a susceptible human ($S_h$). The probability that a vector chooses a particular human or other source of blood to bite can be assumed as $\dfrac{1}{N_h}$. Thus, a human receives in average $a\dfrac{N_v}{N_h}$ bites per unit of times. Then, the infection rate per susceptible human is given $a\beta_{hv}\dfrac{N_v}{N_h}\dfrac{(\eta_vE_v+I_v)}{N_v}$. In expression of $\lambda_h$, the modification parameter $0 <\eta_v< 1$ accounts for the assumed reduction in transmissibility of exposed mosquitoes relative to infectious mosquitoes \cite{gaguab} (see the references therein for the specific sources). Latent humans ($E_h$) become infectious ($I_h$) at rate $\gamma_h$. Infectious humans recover at a constant rate, $\sigma$ or dies  as consequence of infection, at a disease-induced death rate $\delta$. Immune humans retain their immunity for life. 

-- Following \cite{moaaca}, the stage structured model is used to describe the vector population dynamics, which consists of three main stages: embryonic (E), larvae (L) and pupae (P). Even if
eggs (E) and immature stages (L and P) are both aquatic, it is important to dissociate them because, for optimal control point of view, drying the breeding sites does not kill eggs, but only larvae and pupae. Moreover, chemical interventions on the breeding sites has impact on the larvae population (as such as pupae), but not on the eggs \cite{moaaca}. The number of laid eggs is assumed proportional to the number of females. The system of stage structured model of aquatic phase development of vector is given by (see \cite{moaaca} for details) 
\begin{equation}
\label{AR3}
\left\lbrace  \begin{array}{ll}
\dot{E}&=\mu_b\left(1-\dfrac{E}{\Gamma_{E}} \right)(S_v+E_v+I_v)-(s+\mu_E)E\\
\dot{L}&=sE\left(1-\dfrac{L}{\Gamma_{L}} \right)-(l+\mu_L)L\\
\dot{P}&=lL-(\theta+\mu_P)P\\
\end{array}\right.
\end{equation}
Unlike the authors of \cite{moaaca}, we take into account the pupal stage in the development of the vector. This is justified by the fact that they do not feed during this transitional stage of development, as they transform from larvae to adults. So, the control mechanisms can not be applied to them.

A rate, $\theta$, of pupae become female Adults. Each individual vector compartment  goes out from the dynamics at natural mortality rates $\mu_v$. The vector susceptible population is decreased following infection, which can be acquired via effective contact with an exposed or infectious human at a rate $\lambda_v=\dfrac{a\beta_{vh}(\eta_hE_h+I_h)}{N_h}$ \cite{gaguab} where $\beta_{hv}$ is the transmission probability from an infected human ($E_h$ or $I_h$) to a susceptible vector ($S_v$). Latent vectors ($E_v$) become infectious ($I_v$) at rate $\gamma_v$. The vector population does not have an immune class, since it is assumed that their infectious period ends with their death \cite{esva99}.
 
Then, we add new terms in the model to assess the different control tools studied: 
\begin{itemize}
\item[(i)] $\alpha_1$ represents the efforts made to protect human from mosquitoes bites. It mainly consists to the use of mosquito nets or wearing appropiate clothing \cite{moaaHee2012}. Thus we modify the infection term as follows:
\begin{equation}
\label{infetRatear3}
\lambda^{c}_h=(1-\alpha_1)\lambda_h,\quad \text{and}\quad \lambda^{c}_v=(1-\alpha_1)\lambda_v,\text{with}\quad 0\leq \alpha_1<1;
\end{equation} 
\item[(ii)] $\eta_1$ and $\eta_2$ are eggs and larvae mortality rates induced by chemical intervention respectively, 
\item[(iii)] $c_m$ is the additional mortality rate due to the adulticide,
\item[(iv)] $\alpha_2$ is the parameter associated with the efficacy of the mechanical control.
\end{itemize}
The above assumptions lead to the following non-linear system of ordinary differential equations
\begin{equation}
\label{AR3}
\left\lbrace  \begin{array}{ll}
\dot{S}_h&=\Lambda_h+\omega V_h-\left( \lambda^{c}_h+\xi+\mu_h\right)S_h\\
\dot{V}_h&=\xi S_h-\left[ (1-\epsilon)\lambda^{c}_h+\omega+\mu_h\right] V_h\\
\dot{E}_h&=\lambda^{c}_h\left[S_h+(1-\epsilon)V_h\right] -(\mu_h+\gamma_h)E_h\\
\dot{I}_h&=\gamma_hE_h-(\mu_h+\delta+\sigma)I_h\\
\dot{R}_h&=\sigma I_h-\mu_hR_h\\
\dot{S}_v&=\theta P-\lambda^{c}_vS_v-(\mu_v+c_m)S_v\\
\dot{E}_v&=\lambda^{c}_vS_v-(\mu_v+\gamma_v+c_m)E_v\\
\dot{I}_v&=\gamma_vE_v-(\mu_v+c_m)I_v\\
\dot{E}&=\mu_b\left(1-\dfrac{E}{\alpha_2\Gamma_{E}} \right)(S_v+E_v+I_v)-(s+\mu_E+\eta_1)E\\
\dot{L}&=sE\left(1-\dfrac{L}{\alpha_2\Gamma_{L}} \right)-(l+\mu_L+\eta_2)L\\
\dot{P}&=lL-(\theta+\mu_P)P\\
\end{array}\right.
\end{equation}
It is important to note that no intervention measure is performed to kill the pupae for two reasons: the first reason is the fact that at this stage, no food is absorbed by the insect, so it is impossible to make her ingest a toxic substance; the second reason is the fact that products soluble in water deposits by contact are not selective mosquito nymphs and act on all the wildlife of the cottage.

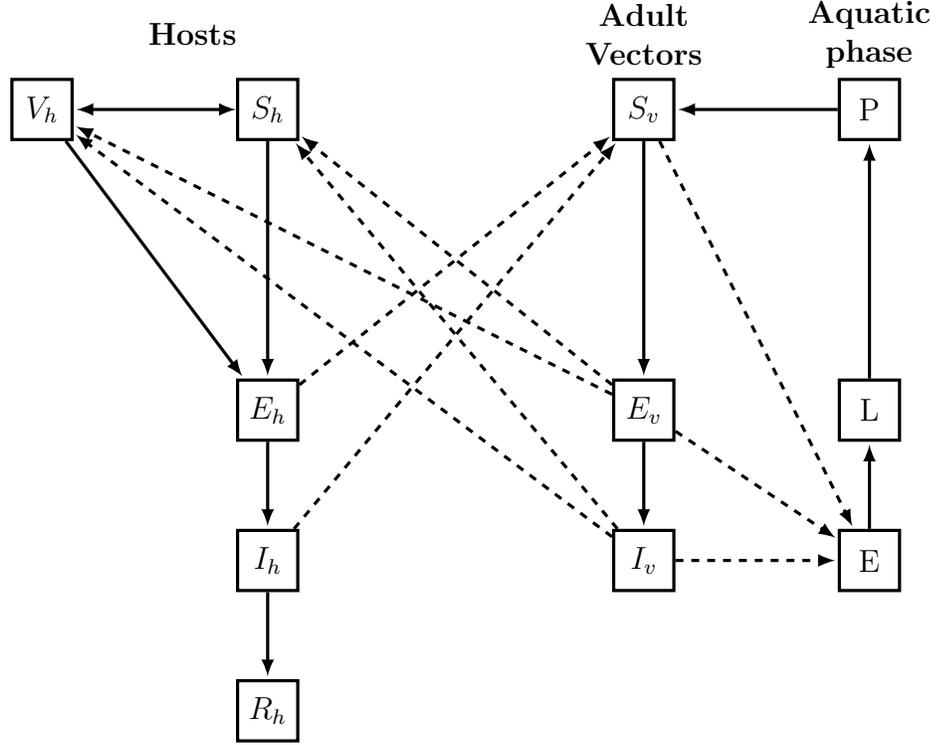
\begin{figure}[h!]
\begin{center}
%\hspace*{-35mm}
\begin{tikzpicture}
[host/.style={rectangle,draw=black!0,fill=black!0,very thick,
inner sep=3pt,minimum size=8mm, font=\bf, align=center},
compartment/.style={rectangle,draw=black!100,fill=black!0,very thick,
inner sep=3pt,minimum size=8mm},
intervention/.style={align=center,red},
bend angle = 15,
post/.style={->,shorten >=1pt,>=latex, very thick},
pre/.style={<-,shorten >=1pt,>=latex, very thick},
both/.style={<->,shorten >=1pt,>=latex, very thick},
predashed/.style={dashed,<-,shorten >=1pt,>=latex, very thick},
postdashed/.style={dashed,->,shorten >=1pt,>=latex, very thick},
]

\node  [host]  (hosts)        at (-1,1)        {Hosts};
\node  [host]  (hosts)        at (5,1)        {Adult\\ Vectors};
\node  [host]  (hosts)        at (8,1)        {Aquatic\\ phase};
%Human
\node [compartment] (noeud1) at (0,0) {$S_h$};
\node [compartment] (noeud2) at (-3,0) {$V_h$};
\node [compartment] (noeud3) at (0,-4) {$E_h$};
\node [compartment] (noeud4) at (0,-6) {$I_h$};
\node [compartment] (noeud5) at (0,-8) {$R_h$};

%Vector
\node [compartment] (noeud6) at (5,0) {$S_v$};
\node [compartment] (noeud7) at (5,-4) {$E_v$};
\node [compartment] (noeud8) at (5,-6) {$I_v$};
%Aquatic

\node [compartment] (noeud9) at (8,0) {P};
\node [compartment] (noeud10) at (8,-4) {L};
\node [compartment] (noeud11) at (8,-6) {E};
%figure proprement dites
\draw[both] (noeud1)-- (noeud2);
\draw[post] (noeud1)-- (noeud3);
\draw[post] (noeud2)-- (noeud3);
\draw[post] (noeud3)-- (noeud4);
\draw[post] (noeud4)-- (noeud5);
\draw[post] (noeud6)-- (noeud7);
\draw[post] (noeud7)-- (noeud8);
\draw[postdashed] (noeud3)-- (noeud6);
\draw[postdashed] (noeud4)-- (noeud6);
\draw[postdashed] (noeud7)-- (noeud1);
\draw[postdashed] (noeud8)-- (noeud1);
\draw[postdashed] (noeud7)-- (noeud2);
\draw[postdashed] (noeud8)-- (noeud2);
\draw[post] (noeud11)-- (noeud10);
\draw[post] (noeud10)-- (noeud9);
\draw[post] (noeud9)-- (noeud6);
\draw[postdashed] (noeud6)-- (noeud11);
\draw[postdashed] (noeud7)-- (noeud11);
\draw[postdashed] (noeud8)-- (noeud11);
\end{tikzpicture}
\end{center}
\caption{A compartment model for vector-borne disease with waning vaccine and mosquito aquatic development phase.} \label{ar3ModelOPC}
\end{figure}

The description of state variables and parameters of model \eqref{AR3} are given in Tables \ref{opctab1} and \ref{opctab2}--\ref{opctab21}.
\begin{table}[t]
\caption{The state variables of model (\ref{AR3}).}
\label{opctab1}
\begin{center}
\begin{tabular}{rlrlrl}
\hline
& Humans  &  &Vectors&&  \\
\hline
$S_{h}$:& Susceptible & $E$:& Eggs\\
$V_{h}$:& Vaccines      &$L$:& Larvae\\
$E_{h}$:& Infected in latent stage  &$P$:& Pupae\\
$I_{h}$:& Infectious   &$S_{v}$:& Susceptible\\
$R_{h}$:& Resistant (immune) &$E_v$& Infected in latent stage\\
&&$I_v$&Infectious\\
\hline
\end{tabular}
\end{center}
\end{table}
\begin{table}
\begin{center}
\caption{Description and baseline values/range of parameters of model (\ref{AR3}).\label{opctab2}} 
\begin{tabular}{llll}
\hline\noalign{\smallskip}
Parameters& Description&Baseline value/range &Sources\\
\noalign{\smallskip}\hline\noalign{\smallskip}
$\Lambda_h$ & Recruitment rate of humans &  2.5 day$^{-1}$&\cite{gaguab}  \\
$\mu_h$ & Natural mortality rate & $\frac{day^{-1}}{(67\times 365)}$&\cite{gaguab}\\
& in humans & &\\
$\xi$&Vaccine coverage&Variable\\
$\omega$&Vaccine waning rate &Variable\\
$\epsilon$&The vaccine efficacy&$0.61$&\cite{Figaro}\\
$a$ & Average number of bites& 1 day$^{-1}$&\cite{Aldila2013,gaguab}\\
$\beta_{hv}$ & Probability of transmission of &0.1, 0.75 day$^{-1}$&\cite{Aldila2013,gaguab}\\
&infection from an infectious human&&\\
&to a susceptible vector&&\\
$\gamma_h$ & Progression rate from $E_h$ to $I_h$ &$\left[\frac{1}{15},\frac{1}{3}\right]$ day$^{-1}$&\cite{duch,Scott2010}\\
$\delta$ & Disease--induced death rate& 10$^{-3}$ day$^{-1}$&\cite{gaguab}\\
$\sigma$ & Recovery rate for humans & 0.1428 day$^{-1}$&\cite{Aldila2013,gaguab}\\
$\eta_h$,$\eta_v$& Modifications parameter&$\left[0,1\right)$&\cite{gaguab}\\
$\mu_v$ & Natural mortality rate of vectors &$\left[\frac{1}{30},\frac{1}{14}\right]$ day$^{-1}$&\cite{Aldila2013,gaguab}\\
$\gamma_v$ & Progression rate from $E_v$ to $I_v$ &$\left[\frac{1}{21},\frac{1}{2}\right]day^{-1}$&\cite{duch,Scott2010} \\
$\beta_{vh}$ &Probability of transmission of &0.1, 0.75 day$^{-1}$&\cite{Aldila2013,gaguab}\\
&infection from an infectious vector&&\\
&to a susceptible human&&\\
$\theta$& Maturation rate from pupae & 0.08 day$^{-1}$&\cite{duch,moaaca,moaaHee2012}\\
& to adult&&\\
$\mu_b$&Number of eggs at each deposit&6 day$^{-1}$&\cite{duch,moaaca,moaaHee2012}\\
$\Gamma_{E}$&Carrying capacity for eggs&$10^3,10^6$&\cite{Aldila2013,moaaca}\\
$\Gamma_{L}$&Carrying capacity for larvae&$5\times 10^2,5\times10^5$&\cite{Aldila2013,moaaca}\\
$\mu_{E}$&Eggs death rate  &0.2 or 0.4 &\cite{moaaHee2012}\\
$\mu_{L}$&Larvae death rate&0.2 or 0.4&\cite{moaaHee2012}\\
$\mu_{P}$&Pupae death rate&$0.4$&\\
\noalign{\smallskip}\hline
\end{tabular}
\end{center}
\end{table}

\begin{table}
\begin{center}
\caption{Description and baseline values/range of parameters of model (\ref{AR3}).\label{opctab21}} 
\begin{tabular}{llll}
\hline\noalign{\smallskip}
Parameters& Description&Baseline value/range &Sources\\
\noalign{\smallskip}\hline\noalign{\smallskip}
$s$&Transfer rate from eggs to larvae&0.7 day$^{-1}$&\cite{moaaHee2012}\\
$l$&Transfer rate from larvae to pupae&0.5 day$^{-1}$&\cite{moaaca,http}\\
$\eta_{1},\eta_{2}$&Eggs and larvae mortality rates&0.001,0.3&\cite{moaaHee2012}\\
&induced by chemical intervention&&\\
$\alpha_1$&Human protection rate&$\left[0,1\right)$&\\
$\alpha_2$&Efficacy of the mechanical control&$\left(0,1\right]$&\cite{duch}\\
$c_{m}$&Adulticide killing rate&[0,0.8]&\cite{duch}\\
\noalign{\smallskip}\hline
\end{tabular}
\end{center}
\end{table}

\subsection{Well posedness of the model }
We now show that the system \eqref{AR3} is mathematically well defined and biologically feasible.
We write
\begin{equation}
\label{intervaria}
\begin{array}{l}
%b_1=(1-\alpha_1)\beta_{hv};\,\,\,b_2=(1-\alpha_1)\beta_{vh};\\
k_1:=\xi+\mu_h;\,\,k_2:=\omega+\mu_h;\,\,\,k_3:=\mu_h+\gamma_h;\,\,\,
k_4:=\mu_h+\delta+\sigma;\\
k_5:=s+\mu_E+\eta_1;\,\,\,k_6:=l+\mu_L+\eta_2;\,\,\,k_7:=\theta+\mu_P;\,\,\,k_8:=\mu_v+c_m;\\
k_9:=\mu_v+\gamma_v+c_m; K_{E}:=\alpha_2\Gamma_{E}; K_{L}:=\alpha_2\Gamma_{L};\pi:=1-\epsilon.
\end{array}
\end{equation}
%Denoting by $N_h$ the total human population and by $N_v$ the total
%population of adult vectors, i.e. $N_h=S_h+V_h+E_h+I_h+R_h$ and $N_v=S_v+E_v+I_v$,
System \eqref{AR3} can be rewritten in the following way
\begin{equation}
\label{model1Ar2} \dfrac{dX}{dt}=\mathbb{A}(X)X+F
\end{equation}
with $X=\left(S_h,V_h,E_h,I_h,R_h,S_v,E_v,I_v,E,L,P\right) ^{T}$,
$\mathbb{A}(X)=\left( \begin{array}{ccccccccccc}
A_1(X)&0\\
0&A_4(X)
\end{array}\right)$
with
$$A_1(X)=\left( \begin{array}{ccccc}
-\lambda^{c}_h-k_1&\omega&0&0&0\\
\xi&-\pi\lambda^{c}_h-k_2&0&0&0\\
\lambda_h&\pi\lambda_h&-k_3&0&0\\
0&0&\gamma_h&-k_4&0\\
0&0&0&\sigma&-\mu_h\\
\end{array}\right)$$ and
$$A_2(X)=\left( \begin{array}{cccccc}
-(\lambda^{c}_v+k_8)&0&0&0&0&\theta\\
\lambda_v&-k_9&0&0&0&0\\
0&\gamma_v&-k_8&0&0&0\\
A_{96}&A_{96}&A_{96}&-A_{97}&0&0\\
0&0&0&A_{109}&-A_{10}&0\\
0&0&0&0&l&-k_7\\
\end{array}\right)$$

where $A_{96}=\mu_b\left(1-\dfrac{E}{K_E}\right)$, $A_{97}=\left(\dfrac{\mu_bN_v}{K_E}+k_5\right)$,
$A_{109}=s\left( 1-\dfrac{L}{K_L}\right)$ and $A_{10}=\dfrac{sE}{K_L}+k_6$; and
$F=\left(\Lambda_h,0,0,0,0,0,0,0,0,0,0\right)^{T}$.

Note that $\mathbb{A}(X)$ is a Metzler matrix, i.e. a matrix such that off diagonal terms are non negative \cite{Berman,Jacquez}, for all $X\in \mathbb R^{11}_{+}$. Thus, using the fact that $F\geq 0$, system \eqref{model1Ar2} is positively invariant in $\mathbb R^{11}_{+}$, which means that any trajectory of the system starting from an initial state in the positive orthant $\mathbb R^{11}_{+}$, remains forever in $\mathbb R^{11}_{+}$. The right-hand side is Lipschitz continuous: there exists a unique maximal solution.

By adding the first four equations of model system \eqref{AR3}, it follows that
$$
\dot{N}_h(t)=\Lambda_h-\mu_hN_h-\delta I_h\leq \Lambda_h-\mu_hN_h
$$
So that
$$
0\leq N_h(t)\leq \dfrac{\Lambda_h}{\mu_h}+\left(N_h(0)-\dfrac{\Lambda_h}{\mu_h}\right)e^{-\mu_ht} \\
$$
Thus, at $t\longrightarrow \infty$,  $0\leq N_h(t)\leq\dfrac{\Lambda_h}{\mu_h}$.

By adding the equations in $S_v$, $E_v$ and $E_v$ of system \eqref{AR3}, it follows that
$$
\dot{N}_v(t)=\theta P-\mu_vN_v\\
$$
So that
$$
0\leq N_v(t)=\dfrac{\theta P}{\mu_v}+\left(N_v(0)-\dfrac{\theta P}{\mu_v}\right)e^{-\mu_vt} 
$$
Thus, at $t\longrightarrow \infty$,  $0\leq N_v(t)\leq\dfrac{\theta lK_L}{\mu_vk_7}$ since $P\leq \dfrac{lK_L}{k_7}$.

Therefore, all feasible solutions of model system \eqref{AR3} enter the region:
%{\small
\[
\begin{split}
\mathcal{D}&=\left\lbrace
(S_h,V_h,E_h,I_h,R_h,S_v,E_v,I_v,E,L,P)\in\mathbb R^{11}:
N_h\leq\dfrac{\Lambda_h}{\mu_h}; E\leq K_E; L\leq K_L;\right.\\
&\left. \qquad P\leq\dfrac{lK_L}{k_7};N_v\leq \dfrac{\theta lK_L}{k_7k_8}\right\rbrace\\
\end{split}
\]
%}
\section{Mathematical analysis}
\label{sec3ar3opc}
\subsection{The disease--free equilibria and its stability}
In the absence of disease in the both population (human and Adult vector), i.e. $\lambda^{c}_h=\lambda^{c}_v=0$ (or $E_h=I_h=E_v=I_v=0$), we obtain two equilibria without disease: the trivial equilibrium (equilibrium without vector and disease) $\mathcal{E}_{0}=\left(S^{0}_h,V^{0}_h,0,0,0,0,0,0,0,0,0\right)$ and the disease--free equilibrium (equilibrium with vector and without disease)
$\mathcal{E}_{1}=\left(S^{0}_h,V^{0}_h,0,0,0,N^{0}_v,0,0,E,L,P\right)$ with
\begin{equation}
\label{TEandDFE}
\begin{array}{l}
S^{0}_h=\dfrac{\Lambda_hk_2}{\mu_h(k_2+\xi)},\;\;\;V^{0}_h=\dfrac{\xi\Lambda_h}{\mu_h(k_2+\xi)},\,\,\,
N^{0}_v=\dfrac{K_{E}K_{L}k_{5}k_{6}\left(\mathcal{N}-1\right)} {\mu_{b}\left(K_{E}s+k_{6}K_{L}\right)},\\
P=\dfrac{K_{E}K_{L}k_{5}k_{6}k_{8}\left(\mathcal{N}-1\right)}{\mu_{b}\theta\left(K_{E}s+k_{6}K_{L}\right)}
,\,\,L=\dfrac{K_{E}K_{L}k_{5}k_{6}k_{7}k_{8}\left(\mathcal{N}-1\right)}{\mu_{b}\theta l\left(K_{E}s+k_{6}K_{L}\right)},\\
E=\dfrac{K_{E}K_{L}k_{5}k_{6}k_{7}k_{8}\left(\mathcal{N}-1\right)}{s\left(\mu_{b}lK_{L}\theta+k_{5}k_{7} k_{8}K_{E}\right)}.
\end{array} 
\end{equation}
where $\mathcal{N}$ is the net reproductive number \cite{moaaca,Cushing1998,CushingetAl1994} given by
\begin{equation}
\label{Nar3}
\mathcal{N}=\dfrac{\mu_b\theta ls}{k_5k_6k_7k_8}
\end{equation}
\subsubsection{Local stability of disease--free equilibria}
The local asymtotic stability result of equilibria $\mathcal{E}_{0}$ and $\mathcal{E}_{1}$  is given in the following.
\begin{theorem}
\label{th3Ar3}
Define the basic reproductive number \cite{dihe,vawa02}
{\footnotesize
\begin{equation}
\label{R0ar3}
\begin{split}
R_0&=\sqrt{\dfrac{a^2(1-\alpha_1)^{2}\beta_{hv}\beta_{vh}\mu_{h}k_{5}k_{6}\left(\gamma_{h}+k_{4}\eta_{h}\right) \left(\gamma_{v}+k_{8}\eta_{v}\right)
 \left(\pi\xi+k_{2}\right)\alpha_2\Gamma_{E}\Gamma_{L}(\mathcal{N}-1)}{k_{3}k_{4}k_{8}k_{9} \mu_{b}\Lambda_{h}\left(\xi+k_{2}\right)\left(k_{6}\Gamma_{L}+s\Gamma_{E}\right)}}
 \end{split}
\end{equation}
}
Then, 
\begin{itemize}
\item[(i)] if $\mathcal{N}\leq 1$, the trivial equilibrium $\mathcal{E}_{0}$ is locally asymptotically stable in $\mathcal{D}$; 
\item[(ii)] if $\mathcal{N}>1$, the trivial equilibrium is unstable and the desease--free equilibrium $\mathcal{E}_{1}$ is locally asymptotically stable in $\mathcal{D}$ whenever $R_0< 1$.
\end{itemize}
\end{theorem}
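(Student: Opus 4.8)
The plan is to linearise \eqref{AR3} at each of the two disease--free states and exploit the triangular structure already visible in the Metzler form \eqref{model1Ar2}: at any state with $E_h=I_h=E_v=I_v=0$ the linearised dynamics of the infected compartments $(E_h,I_h,E_v,I_v)$ depend only on those compartments, so the Jacobian is block triangular and its spectrum is the union of the spectra of a ``non--infected'' block and an ``infected'' block. Part (i) is then read off a small Metzler matrix, and part (ii) follows from the next--generation operator of van den Driessche and Watmough \cite{dihe,vawa02}.

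For (i): at $\mathcal{E}_0$ the vector population is absent ($S_v^0=0$), so every product $\lambda^{c}_\bullet S_\bullet$ has vanishing linearisation, and the Jacobian splits into the diagonal blocks $\left(\begin{smallmatrix}-k_1&\omega\\ \xi&-k_2\end{smallmatrix}\right)$ on $(S_h,V_h)$ (negative trace and determinant $\mu_h(\mu_h+\omega+\xi)>0$); the scalar $-\mu_h$ on $R_h$; the triangular blocks $\mathrm{diag}(-k_3,-k_4)$ on $(E_h,I_h)$ and $\mathrm{diag}(-k_9,-k_8)$ on $(E_v,I_v)$; and the $4\times4$ Metzler matrix
\[
M=\begin{pmatrix}-k_5&0&0&\mu_b\\ s&-k_6&0&0\\ 0&l&-k_7&0\\ 0&0&\theta&-k_8\end{pmatrix}
\]
on $(E,L,P,S_v)$. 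The leading principal minors of $-M$ are $k_5$, $k_5k_6$, $k_5k_6k_7$ and $\det(-M)=k_5k_6k_7k_8-\mu_b\theta ls=k_5k_6k_7k_8(1-\mathcal N)$ by \eqref{Nar3}; the M--matrix characterisation of Metzler stability \cite{Berman,Jacquez} then shows $M$ is Hurwitz precisely when $\mathcal N<1$, giving local asymptotic stability of $\mathcal E_0$ in that range (the marginal value $\mathcal N=1$, at which $\mathcal E_1$ merges with $\mathcal E_0$, is obtained as the limiting case). When $\mathcal N>1$ we have $\det M<0$; since a Metzler matrix has its stability modulus at a real eigenvalue (Perron--Frobenius for Metzler matrices), that eigenvalue must be positive, so $\mathcal E_0$ is unstable.

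For (ii): assume $\mathcal N>1$, so that $\mathcal E_1$ has the positive vector components in \eqref{TEandDFE}. I first check that the non--infected subsystem --- $(S_h,V_h,R_h)$ together with the aquatic/adult block $(S_v,E,L,P)$ now carrying the logistic corrections --- is locally asymptotically stable at $\mathcal E_1$: the host part is unchanged, and the aquatic/adult part is again a Metzler matrix whose diagonal entries are only made more negative by the logistic terms, so the criterion of \cite{Berman,Jacquez} still applies; this is the hypothesis needed to invoke Theorem~2 of \cite{vawa02}. Splitting the infected compartments as $\dot x=\mathcal F(x)-\mathcal V(x)$ with $x=(E_h,I_h,E_v,I_v)^T$, $\mathcal F$ collecting the new--infection terms $\lambda^{c}_h[S_h+\pi V_h]$ and $\lambda^{c}_vS_v$, and linearising at $\mathcal E_1$, one gets a matrix $F$ with nonzero entries only in the host--from--vector and vector--from--host positions and the block--triangular $V=\left(\begin{smallmatrix}k_3&0\\ -\gamma_h&k_4\end{smallmatrix}\right)\oplus\left(\begin{smallmatrix}k_9&0\\ -\gamma_v&k_8\end{smallmatrix}\right)$. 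Hence $FV^{-1}$ has the anti--block form $\left(\begin{smallmatrix}0&B_{hv}\\ B_{vh}&0\end{smallmatrix}\right)$ and $R_0=\rho(FV^{-1})=\sqrt{\rho(B_{hv}B_{vh})}$; computing $V^{-1}$ and the rank--one product $B_{hv}B_{vh}$ gives $\rho(B_{hv}B_{vh})$ as a single fraction, and substituting $S_h^0,V_h^0$ and $N_v^0=S_v^0$ from \eqref{TEandDFE} together with $N_h^0=\Lambda_h/\mu_h$, $K_E=\alpha_2\Gamma_E$, $K_L=\alpha_2\Gamma_L$ recovers the closed form \eqref{R0ar3}. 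Theorem~2 of \cite{vawa02} then gives local asymptotic stability of $\mathcal E_1$ when $R_0<1$, proving (ii).

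The main obstacle is the preliminary claim in (ii) that the aquatic/adult block is Hurwitz at the \emph{interior} equilibrium $\mathcal E_1$: unlike at $\mathcal E_0$ the logistic feedbacks make this less transparent, and it is precisely the hypothesis underpinning the use of \cite{vawa02}, so establishing it carefully (again via the leading--principal--minor test for the associated Metzler matrix) is the real work. A secondary point is the non--hyperbolic borderline $\mathcal N=1$ in part (i), handled as a limiting case or, for a fully rigorous statement, by a centre--manifold reduction on the single zero mode of $M$. Everything else is bookkeeping on triangular and Metzler blocks plus the one rank--one eigenvalue computation for $R_0$.
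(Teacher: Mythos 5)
Your argument is correct and, for part (ii), follows essentially the same route as the paper: both identify the next--generation pair $F,V$ at $\mathcal{E}_1$ and invoke Theorem~2 of \cite{vawa02}, with $R_0=\rho(FV^{-1})$ reducing to the square root in \eqref{R0ar3} because of the anti--block (host--vector) structure. For part (i) you take a genuinely different and shorter path. The paper factors the characteristic polynomial of $Df(\mathcal{E}_0)$ into linear factors, a quadratic $\phi_1$ for the $(S_h,V_h)$ block, and a quartic $\phi_2$ for the $(E,L,P,S_v)$ block, and then verifies the Routh--Hurwitz conditions $H_1,H_2,H_3>0$ by an explicit (page--long) expansion, with $H_4=A_4H_3$ and $A_4=k_5k_6k_7k_8(1-\mathcal{N})$ carrying the sign of $1-\mathcal{N}$. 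You instead observe that the same $4\times 4$ block is Metzler and apply the leading--principal--minor characterisation of Metzler (equivalently, M--matrix) stability \cite{Berman}, which reduces the whole computation to $\det(-M)=k_5k_6k_7k_8(1-\mathcal{N})$; this buys a much cleaner verification and, via Perron--Frobenius, the instability of $\mathcal{E}_0$ for $\mathcal{N}>1$ at no extra cost, which the paper asserts but does not argue. Two caveats, both of which you flag honestly and both of which the paper also leaves implicit: the borderline $\mathcal{N}=1$ is non--hyperbolic, so neither Routh--Hurwitz nor the minor test decides it (the theorem's ``$\leq$'' is really established only for strict inequality); and the hypothesis of \cite{vawa02} that the disease--free subsystem is stable at the \emph{interior} equilibrium $\mathcal{E}_1$ does need the check you defer. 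That check does go through by your own method: writing $\hat k_5=k_5+\mu_bN_v^{0}/K_E$, $\hat k_6=k_6+sE^{*}/K_L$, $\hat\mu_b=\mu_b(1-E^{*}/K_E)$, $\hat s=s(1-L^{*}/K_L)$, the equilibrium relations in \eqref{TEandDFE} give $\hat\mu_b\hat s\, l\theta=k_5k_6k_7k_8<\hat k_5\hat k_6k_7k_8$, so the last leading principal minor is positive and the block is Hurwitz whenever $\mathcal{N}>1$. With that one line added, your proof is complete and arguably tighter than the published one.
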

\begin{proof}
See Appendix \ref{Ar3Ap1}.\hfill
\end{proof}
The basic reproduction number of a disease is the average number of secondary cases that one infectious individual produces during his infectious period in a totally susceptible population. The epidemiological implication of Theorem \ref{th3Ar3} is that, in general, when the basic reproduction number, $R_0$ is less than unity, a small influx of infectious vectors into the community would not generate large outbreaks, and the disease dies out in time (since the DFE is LAS) \cite{gaguab,dihe,vawa02,cretal2005}. However, we show in the subsection \ref{ssEEBIF} that the disease may still persist even when $R_0 < 1$.

\subsubsection{Global stabilty of the trivial equilibrium}
The global stability of the trivial equilibrium is given by the following result:
\begin{theorem}
\label{GAsTEar3} If $\mathcal{N}\leq 1$, then $\mathcal{E}_0$ is globally asymptotically stable on $\mathcal{D}$.
\end{theorem}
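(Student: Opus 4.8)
The plan is to use that $\mathcal N\le1$ renders the whole vector block (aquatic stages plus adults) subcritical, so that $E,L,P,S_v,E_v,I_v$ all die out; once they are gone the force of infection in humans vanishes, the infected human compartments decay, and the $(S_h,V_h)$ subsystem relaxes to $(S^0_h,V^0_h)$. Concretely I would build a linear Lyapunov function in the vector variables and close the argument with LaSalle's invariance principle on the compact positively invariant set $\mathcal D$.

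First I would set $N_v=S_v+E_v+I_v$ and note that adding the $S_v,E_v,I_v$ equations of \eqref{AR3} cancels the incidence terms, giving $\dot N_v=\theta P-k_8N_v$. Then I would introduce
\[
\mathcal L=c_1E+c_2L+c_3P+c_4N_v,\qquad c_4=1,\ c_3=\tfrac{\theta}{k_7},\ c_2=\tfrac{l\theta}{k_6k_7},\ c_1=\tfrac{sl\theta}{k_5k_6k_7},
\]
which is nonnegative on $\mathcal D\subset\mathbb R^{11}_+$. The weights are chosen by matching the developmental cycle, so that in $\dot{\mathcal L}$ the coefficients of $E$, $L$ and $P$ vanish while the coefficient of $N_v$ becomes $c_1\mu_b-k_8=k_8(\mathcal N-1)$. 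Using the aquatic equations together with $0\le E\le K_E$, $0\le L\le K_L$ on $\mathcal D$, a short computation gives
\[
\dot{\mathcal L}=k_8(\mathcal N-1)N_v-\frac{c_1\mu_b}{K_E}\,EN_v-\frac{c_2s}{K_L}\,EL\ \le\ 0\qquad\text{on }\mathcal D.
\]

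Next I would apply LaSalle's invariance principle. For $\mathcal N<1$, $\dot{\mathcal L}=0$ forces $N_v=0$, hence $S_v=E_v=I_v=0$ by nonnegativity, and then on the largest invariant subset $\dot N_v=0=\theta P$ gives $P=0$, $\dot P=0=lL$ gives $L=0$, and $\dot L=0=sE$ gives $E=0$. The boundary case $\mathcal N=1$ needs the nonlinear terms: on an invariant set where $\dot{\mathcal L}\equiv0$ one must have $E\equiv0$ (else $L\equiv0$ on an interval where $E>0$, contradicting $\dot L=sE$ there), whence $\dot E=0=\mu_bN_v$ and the same cascade closes. On this invariant set all vector compartments are zero, so $\lambda^c_h\equiv0$; the $E_h,I_h,R_h$ equations then reduce to a linear decaying chain, forcing $E_h=I_h=R_h=0$, and the residual subsystem $\dot S_h=\Lambda_h+\omega V_h-k_1S_h$, $\dot V_h=\xi S_h-k_2V_h$ is Hurwitz with unique equilibrium $(S^0_h,V^0_h)$. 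Hence the largest invariant set in $\{\dot{\mathcal L}=0\}$ is $\{\mathcal E_0\}$, every trajectory in $\mathcal D$ converges to $\mathcal E_0$, and combining this global attractivity with the local asymptotic stability of $\mathcal E_0$ from Theorem \ref{th3Ar3}(i) (which already covers $\mathcal N\le1$) yields global asymptotic stability on $\mathcal D$.

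The step I expect to be the main obstacle is the degenerate case $\mathcal N=1$: there the Jacobian of the vector block at $\mathcal E_0$ has a zero eigenvalue and the naive linear comparison system no longer converges to the origin, so the reduction of the LaSalle invariant set must genuinely use the saturation terms, as sketched above. A secondary subtlety worth flagging is that $\mathcal L$ is not positive definite with respect to $\mathcal E_0$ — it vanishes on the whole face $\{E=L=P=N_v=0\}$ — which is precisely why one cannot conclude stability from $\mathcal L$ alone and must borrow the local asymptotic stability from Theorem \ref{th3Ar3}.
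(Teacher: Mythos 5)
Your proposal is correct, and it is a genuinely cleaner variant of the paper's argument rather than a reproduction of it. The paper also uses a linear Lyapunov functional with weights tuned to the developmental chain $E\to L\to P\to$ adults, but it takes $\mathcal{L}(Y)=\langle g,Y\rangle$ on \emph{all eleven} translated coordinates $Y=X-\mathcal{E}_0$, with $g=\bigl(1,\dots,1,\tfrac{k_8}{\mu_b},\tfrac{k_5k_8}{\mu_b s},\tfrac{k_5k_6k_8}{\mu_b sl}\bigr)$, and then reads off $\dot{\mathcal{L}}=-\mu_hY_1-\dots+\theta\bigl(1-\tfrac{1}{\mathcal{N}}\bigr)Y_{11}<0$. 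That functional is not sign-definite (the components $Y_1=S_h-S^0_h$ and $Y_2=V_h-V^0_h$ can be negative on $\mathcal{D}$), so the paper's claimed negativity of $\dot{\mathcal{L}}$ and the subsequent LaSalle step are stated rather than fully justified. By restricting to the vector block, where every coordinate is nonnegative, you obtain a genuinely nonnegative $\mathcal{L}$ with $\dot{\mathcal{L}}=k_8(\mathcal{N}-1)N_v-\tfrac{c_1\mu_b}{K_E}EN_v-\tfrac{c_2s}{K_L}EL\le 0$ (your weight computation checks out: the $E$, $L$, $P$ coefficients cancel and the $N_v$ coefficient is $c_1\mu_b-k_8=k_8(\mathcal{N}-1)$), and you then recover the human compartments by identifying the largest invariant set, which the paper skips. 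Your explicit treatment of the degenerate case $\mathcal{N}=1$ via the saturation terms is also something the paper does not address (its Routh--Hurwitz argument for Theorem \ref{th3Ar3}(i) really only covers $\mathcal{N}<1$, so the local-stability ingredient you borrow is itself only as solid as the paper's statement of it --- a caveat you rightly flag). The trade-off is that the paper's single functional gives, at least formally, a one-line derivative computation covering all compartments at once, whereas your route requires the extra LaSalle cascade through $P$, $L$, $E$ and then the human chain; but your version is the one that actually withstands scrutiny.
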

\begin{proof}
To prove the global asymptotic stability of the trivial disease--free equilibrium $\mathcal{E}_0$, we use the direct Lyapunov method. To this aim, we set $Y=X-TE$ with $X=(S_h,V_h,E_h,I_h,R_h,S_v,E_v,I_v,E,L,P)^{T}$ and rewrite \eqref{AR3} in the following manner
\[
\dfrac{dY}{dt}=\mathcal{B}(Y)Y.
\]
The global asymptotic stabilty of $\mathcal{E}_0$ is achieved by considering the following Lyapunov function
$\mathcal{L}(Y)=<g,Y>$ where $g=\left(1,1,1,1,1,1,1,1,\dfrac{k_8}{\mu_b},\dfrac{k_5k_8}{\mu_bs},\dfrac{k_5k_6k_8}{\mu_bsl}\right)$. See Appendix \ref{Ar3Ap2} for the details.\hfill
\end{proof}
\subsubsection{Global stabilty of the disease--free equilibrium}
%We assume the net reproductive number $\mathcal{N}>1$ hereafter. 
We now turn to the global stabilty of the disease--free equilibrium $\mathcal{E}_1$. we prove that the disease--free  equilibrium $\mathcal{E}_1$ is globally asymptotically stable under a certain threshold condition. To this aim, we use a result obtained by Kamgang and Sallet \cite{JCK2008}, which is an extension of some results given in \cite{vawa02}. Using the property of DFE, it is possible to rewrite \eqref{AR3} in the following manner
\begin{equation}
\label{model3Ar3}
\left\lbrace \begin{array}{ll}
\dot{X}_S=&\mathcal{A}_1(X)(X_S-X_{DFE})+\mathcal{A}_{12}(X)X_I\\
\dot{X}_I=&\mathcal{A}_2(X)X_I
\end{array}\right.
\end{equation}
where $X_S$ is the vector representing the state of different compartments of non transmitting individuals 
$(S_h, V_h, R_h, S_v, E, L, P)$ and the vector $X_I$ represents the state of compartments of different transmitting individuals ($E_h$, $I_h$, $E_v$, $I_v$). Here, we have $X_S=(S_h, V_h, R_h, S_v, E, L, P)^{T}$,
$X_I=(E_h, I_h, E_v, I_v)^{T}$, $X=(X_S, X_I)$ and\\ $X_{DFE}:=\mathcal{E}_1=\left(S^{0}_h,V^{0}_h,0,0,0,N^{0}_v,0,0,E,L,P\right)^{T}$,

%{\scriptsize 
$$
\mathcal{A}_1(X)=\left( \begin{array}{ccccccc}
\mathcal{A}^{(1)}_1&\mathcal{A}^{(2)}_1\\
\mathcal{A}^{(3)}_1&\mathcal{A}^{(4)}_1
\end{array}\right), 
$$
with
$
\mathcal{A}^{(1)}_1(X)=\left( \begin{array}{cccc}
-(\lambda^{c}_h+k_1)&\omega&0&0\\
\xi&-(\pi\lambda^{c}_h+k_2)&0&0\\
0&0&-\mu_h&0\\
0&0&0&-(\lambda^{c}_v+k_8)\\
\end{array}\right), 
$\\
$
\mathcal{A}^{(2)}_1(X)=\left( \begin{array}{ccc}
0&0&0\\
0&0&0\\
0&0&0\\
0&0&\theta\\
\end{array}\right), 
$
$
\mathcal{A}^{(3)}_1(X)=\left( \begin{array}{ccccccc}
0&0&0&\mu_b\left(1-\dfrac{E}{K_E}\right)\\
0&0&0&0\\
0&0&0&0\\
\end{array}\right),\\ 
$
$
\mathcal{A}^{(4)}_1(X)=\left( \begin{array}{ccccccc}
-\left(k_5+\mu_b\dfrac{S^{0}_v}{K_E}\right)&0&0\\
s\left(1-\dfrac{L}{K_L}\right) &-\left(k_6+\dfrac{sE^{*}}{K_L}\right)&0\\
0&l&-k_7\\
\end{array}\right), 
$

$$
\mathcal{A}_{12}(X)=\left( \begin{array}{ccccccc}
0&0&-\dfrac{ab_{1}\eta_vS^{0}_h}{N_h}&-\dfrac{ab_{1}S^{0}_h}{N_h}&0&0&0\\
0&0&-\dfrac{ab_{1}\eta_v\pi V^{0}_h}{N_h}&-\dfrac{ab_{1}\pi V^{0}_h}{N_h}&0&0&0\\
0&\sigma&0&0&0&0&0\\
-\dfrac{ab_{2}\eta_hS^{0}_v}{N_h}&-\dfrac{ab_{2}S^{0}_v}{N_h}&0&0&0&0&0\\
0&0&\mu_b\left(1-\dfrac{E}{K_E}\right)&\mu_b\left(1-\dfrac{E}{K_E}\right)&0&0&0\\
0&0&0&0&0&0&0\\
0&0&0&0&0&0&0\\
\end{array}\right),
$$

$$
\mathcal{A}_{2}(X)=\left( \begin{array}{cccc}
-k_3&0&\dfrac{ab_{1}\eta_v(S_h+\pi V_h)}{N_h}&\dfrac{ab_{1}(S_h+\pi V_h)}{N_h}\\
\gamma_h&-k_4&0&0\\
\dfrac{ab_{2}\eta_hS_v}{N_h}&\dfrac{ab_{2}S_v}{N_h}&-k_9&0\\
0&0&\gamma_v&-k_8
\end{array}\right).
$$
A direct computation shows that the eigenvalues of $\mathcal{A}_1(X)$ have negative real parts. Thus the system $\dot{X}_S=\mathcal{A}_1(X)(X_S-X_{DFE})$ is globally asymptotically stable at $X_{DFE}$. Note also that $\mathcal{A}_{2}(X)$ is a Metzler matrix.

We now consider the bounded set $\mathcal{G}$:
\[
\begin{split}
\mathcal{G}&=\left\lbrace (S_h,V_h,E_h,I_h,R_h,S_v,E_v,I_v,E,L,P)\in\mathbb R^{11}:S_h\leq N_h, V_h\leq N_h,E_h\leq N_h,\right.\\
&\left. I_h\leq N_h, R_h\leq N_h,\bar{N_h}=\Lambda_h/(\mu_h+\delta)\leq N_h\leq N^{0}_h=\Lambda_h/\mu_h;\right.\\
&\left. E\leq K_E; L\leq K_L;P\leq\dfrac{lK_L}{k_7};N_v\leq \dfrac{\theta lK_L}{k_7k_8}\right\rbrace\\
\end{split}
\]
Let us recall the following theorem \cite{JCK2008} (See \cite{JCK2008} for a proof in a more general setting).
\begin{theorem}
\label{JCK2008}
Let $\mathcal{G}\subset\mathcal{U}=\mathbb R^{7}\times\mathbb R^{4}$. The system \eqref{AR3} is of class $C^{1}$, defined on $\mathcal{U}$. If
\begin{itemize}
\item[(1)] $\mathcal{G}$ is positively invariant relative to \eqref{model3Ar3}.
\item[(2)] The system $\dot{X}_S=\mathcal{A}_1(X)(X_S-X_{DFE})$ is Globally asymptotically stable at $X_{DFE}$.
\item[(3)] For any $x\in \mathcal{G}$, the matrix $\mathcal{A}_2(x)$ is Metzler irreducible.
\item[(4)] There exists a matrix $\bar{\mathcal{A}_2}$ , which is an upper bound of
the set\\ $\mathcal{M}=\left\lbrace \mathcal{A}_2(x)\in\mathcal{M}_{4}(\mathbb R):x\in\mathcal{G} \right\rbrace $ with the property that if $\mathcal{A}_2\in\mathcal{M}$, for any $\bar{x}\in\mathcal{G}$, such that $\mathcal{A}_2(\bar{x})=\bar{\mathcal{A}}_2$, then $\bar{x}\in \mathbb R^{7}\times\left\lbrace 0\right\rbrace $.
\item[(5)] The stability modulus of $\bar{\mathcal{A}_2}$, $\alpha(\mathcal{A}_2)=max_{\lambda\in sp(\mathcal{A}_2)}\mathcal{R}{\bf e}(\lambda)$ satisfied $\alpha(\mathcal{A}_2)\leq 0$.
\end{itemize}
Then the DFE is GAS in $\mathcal{G}$. 
\end{theorem}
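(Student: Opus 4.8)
The plan is to decouple the dynamics of \eqref{model3Ar3} into the ``infected'' block $\dot X_I=\mathcal A_2(X)X_I$, to dominate this block by a monotone comparison against the \emph{constant} matrix $\bar{\mathcal A_2}$, and finally to recover $X_S\to X_{DFE}$ from the cascade structure. First I would note that by hypothesis (1) and the boundedness of $\mathcal G$, every trajectory starting in $\mathcal G$ remains in the compact set $\mathcal G$, so in particular $X_I(t)\ge 0$ for all $t\ge 0$. Since $\bar{\mathcal A_2}$ is an entrywise upper bound of the matrices $\mathcal A_2(X(t))$ and $X_I(t)\ge 0$, we get the differential inequality $\dot X_I=\mathcal A_2(X)X_I\le\bar{\mathcal A_2}X_I$, and a Kamke-type comparison theorem for cooperative (Metzler) systems yields $0\le X_I(t)\le Z(t)$, where $Z$ solves the linear autonomous system $\dot Z=\bar{\mathcal A_2}Z$ with $Z(0)=X_I(0)$. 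Being an upper bound of Metzler matrices, $\bar{\mathcal A_2}$ is itself Metzler; dominating the irreducible matrices of hypothesis (3), it is irreducible as well, so Perron--Frobenius theory applies: $\alpha(\bar{\mathcal A_2})$ is a simple real eigenvalue carrying a strictly positive left eigenvector $c\gg 0$, and hypothesis (5) gives $\alpha(\bar{\mathcal A_2})\le 0$.

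Now the argument splits according to the sign of $\alpha(\bar{\mathcal A_2})$. If $\alpha(\bar{\mathcal A_2})<0$, then $\bar{\mathcal A_2}$ is Hurwitz, so $Z(t)\to 0$ and hence $X_I(t)\to 0$. If $\alpha(\bar{\mathcal A_2})=0$, I would take $\mathcal V(X)=\langle c,X_I\rangle\ge 0$ as a Lyapunov function on $\mathcal G$; then $\dot{\mathcal V}=c^{T}\mathcal A_2(X)X_I\le c^{T}\bar{\mathcal A_2}X_I=0$, so $\mathcal V$ is non-increasing. LaSalle's invariance principle (valid since $\mathcal G$ is compact and positively invariant) forces every trajectory to approach the largest invariant set $M\subset\{X\in\mathcal G:\dot{\mathcal V}(X)=0\}$. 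On $M$ one has $c^{T}(\bar{\mathcal A_2}-\mathcal A_2(X))X_I=0$ with $c\gg 0$, $\bar{\mathcal A_2}-\mathcal A_2(X)\ge 0$ and $X_I\ge 0$; since $\mathcal A_2$ is irreducible, a nonzero nonnegative $X_I$ becomes instantly strictly positive along the flow, and then that equality would force $\mathcal A_2(X)=\bar{\mathcal A_2}$, so hypothesis (4) gives $X_I=0$ — a contradiction. Hence $X_I\equiv 0$ on $M$, and again $X_I(t)\to 0$.

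In either case $X_I(t)\to 0$, so the first block of \eqref{model3Ar3} reads $\dot X_S=\mathcal A_1(X)(X_S-X_{DFE})+\mathcal A_{12}(X)X_I$ with a perturbation $\mathcal A_{12}(X)X_I$ that is bounded on $\mathcal G$ and tends to $0$; since by hypothesis (2) the system $\dot X_S=\mathcal A_1(X)(X_S-X_{DFE})$ is globally asymptotically stable at $X_{DFE}$, a standard cascade (asymptotically autonomous systems) argument gives $X_S(t)\to X_{DFE}$. Together with $X_I(t)\to 0$ and the stability supplied by (2) — and, in the critical case, by the monotonicity of $\mathcal V$ — this shows the DFE is globally asymptotically stable in $\mathcal G$.

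The step I expect to be the main obstacle is the critical case $\alpha(\bar{\mathcal A_2})=0$: there the comparison system does not decay on its own, so convergence has to be extracted from LaSalle's principle, and this is exactly where the irreducibility of hypothesis (3) and the ``tightness'' of the bound in hypothesis (4) (which rules out non-trivial invariant behaviour on the face $X_I=0$) are used in an essential way; elsewhere the work is routine once the Metzler/comparison machinery is set up.
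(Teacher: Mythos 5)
Your proof is correct, and it is essentially the argument of Kamgang and Sallet \cite{JCK2008} that the paper itself merely cites without reproducing: bound the infected block by the constant Metzler matrix $\bar{\mathcal{A}_2}$, use the positive left Perron eigenvector (equivalently your comparison/LaSalle split between $\alpha<0$ and $\alpha=0$, with hypotheses (3)--(4) killing the invariant set in the critical case), and then pass the convergence $X_I\to 0$ through hypothesis (2) to the $X_S$ block. The only step you defer to ``standard'' machinery --- the asymptotically autonomous cascade argument for $X_S\to X_{DFE}$ on the compact set $\mathcal{G}$ --- is exactly how the cited reference handles it, so nothing essential is missing.
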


For our model system \eqref{AR3}, conditions (1--3) of the theorem \ref{JCK2008} are satisfied. An upper bound of the set of matrices $\mathcal{M}$, which is the matrix $\bar{\mathcal{A}_2}$ is given by
$$
\bar{\mathcal{A}_2}=\left( \begin{array}{cccc}
-k_3&0&\dfrac{ab_{1}\eta_v(S^{0}_h+\pi V^{0}_h)}{\bar{N}_h}&\dfrac{ab_{1}(S^{0}_h+\pi V^{0}_h)}{\bar{N}_h}\\
\gamma_h&-k_4&0&0\\
\dfrac{ab_{2}\eta_hS^{0}_v}{\bar{N_h}}&\dfrac{ab_{2}S^{0}_v}{\bar{N_h}}&-k_9&0\\
0&0&\gamma_v&-k_8
\end{array}\right),
$$ 
where $\bar{N_h}=\dfrac{\Lambda_h}{(\mu_h+\delta)}$.

To check condition (5) in theorem \ref{JCK2008}, we will use the useful lemma \cite{JCK2008} in \ref{Ar3Ap4}. To this aim, let\\
$A=\left(\begin{array}{cc}
-k_3&0\\
\gamma_h&-k_4
\end{array} \right)$,
$B=\left(\begin{array}{cc}
\dfrac{ab_{1}\eta_v(S^{0}_h+\pi V^{0}_h)}{\bar{N_h}}&\dfrac{ab_{1}(S^{0}_h+\pi V^{0}_h)}{\bar{N_h}}\\
0&0
\end{array} \right)$,\\
$C=\left(\begin{array}{cc}
\dfrac{ab_{2}\eta_hS^{0}_v}{\bar{N_h}}&\dfrac{ab_{2}S^{0}_v}{\bar{N_h}}\\
0&0
\end{array} \right)$,
$D=\left(\begin{array}{cc}
-k_9&0\\
\gamma_v&-k_8
\end{array} \right)$.
%\end{theorem}

Clearly, $A$ is a stable Metzler matrix. Then, after some computations, we obtain
$D-CA^{-1}B$ is a stable Metzler matrix if and only if
\begin{equation}
\label{GasDfeCondition}
\begin{split}
R_c<1
\end{split}
\end{equation}
where
{\footnotesize
\begin{equation}
\label{R_car3}
R_c=\sqrt{\dfrac{a^2(1-\alpha_1)^{2}\beta_{hv}\beta_{vh}k_{5}k_{6}\left(\gamma_{h}+k_{4}\eta_{h}\right)
 \left(\gamma_{v}+k_{8}\eta_{v}\right)K_{E}K_{L}(k_2+\pi\xi)(\mathcal{N}-1)}{k_{3}k_{4}k_{8}k_{9}\mu_{b}(k_2+\xi)(k_{6}K_{L}+K_{E}s)\Lambda_{h}} \dfrac{(\mu_h+\delta)^{2}}{\mu_h}}.
\end{equation}
}

We claim the following result
\begin{theorem}
\label{GASDFE}
If $\mathcal{N}>1$ and $R_0< R_c<1$, then the disease--free equilibrium $\mathcal{E}_1$ is globally asymptotically stable in $\mathcal{G}$.
\end{theorem}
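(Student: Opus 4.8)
The plan is to derive Theorem~\ref{GASDFE} directly from the Kamgang--Sallet criterion (Theorem~\ref{JCK2008}) applied to the decomposition \eqref{model3Ar3}, exploiting that most of the verification is already assembled above: conditions (1)--(3) of Theorem~\ref{JCK2008} (positive invariance of $\mathcal{G}$ relative to \eqref{model3Ar3}, global asymptotic stability of the linear subsystem $\dot X_S=\mathcal{A}_1(X)(X_S-X_{DFE})$ at $X_{DFE}$, and Metzler-irreducibility of $\mathcal{A}_2(x)$ for $x\in\mathcal{G}$) have been noted to hold, and $\bar{\mathcal{A}_2}$ has been exhibited as an entrywise upper bound of $\mathcal{M}=\{\mathcal{A}_2(x):x\in\mathcal{G}\}$. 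What remains is (i) the second half of condition~(4) and (ii) condition~(5), after which the theorem follows by invoking Theorem~\ref{JCK2008}.

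For condition~(4) I would show that $\mathcal{A}_2(\bar x)=\bar{\mathcal{A}_2}$ with $\bar x\in\mathcal{G}$ forces $\bar x\in\mathbb R^{7}\times\{0\}$. Equality in the off-diagonal entries of the first and third rows of $\mathcal{A}_2$ forces the quotients $(S_h+\pi V_h)/N_h$ and $S_v/N_h$ at $\bar x$ to attain the extremal values built into $\bar{\mathcal{A}_2}$; using the defining inequalities of $\mathcal{G}$ together with the explicit values \eqref{TEandDFE} for $S_h^{0}$, $V_h^{0}$ and $N_v^{0}$, one checks these extrema are realized only at $\mathcal{E}_1$, where the transmitting variables $E_h,I_h,E_v,I_v$ all vanish. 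This is the one place where a short genuine estimate is needed; the rest is bookkeeping.

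For condition~(5) one must check $\alpha(\bar{\mathcal{A}_2})\le 0$. Write $\bar{\mathcal{A}_2}=\left(\begin{smallmatrix}A&B\\C&D\end{smallmatrix}\right)$ with the $2\times2$ blocks $A,B,C,D$ as set up before \eqref{GasDfeCondition}. Since $A=\left(\begin{smallmatrix}-k_3&0\\ \gamma_h&-k_4\end{smallmatrix}\right)$ is a stable Metzler matrix (lower triangular, strictly negative diagonal, so $-A^{-1}\ge 0$), the lemma recalled in Appendix~\ref{Ar3Ap4} reduces Metzler-stability of $\bar{\mathcal{A}_2}$ to stability of the Schur complement $D-CA^{-1}B$; a direct $2\times2$ computation shows $D-CA^{-1}B$ is Metzler with negative trace, and its determinant is positive exactly when, after substituting \eqref{TEandDFE}, the inequality $R_c<1$ holds with $R_c$ as in \eqref{R_car3}. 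Hence under the standing hypothesis $R_c<1$ one gets $\alpha(\bar{\mathcal{A}_2})<0$, condition~(5) holds, and Theorem~\ref{JCK2008} yields that $\mathcal{E}_1$ is globally asymptotically stable in $\mathcal{G}$. The extra requirement $R_0<R_c$ is automatic, since $R_c^{2}=R_0^{2}(\mu_h+\delta)^{2}/\mu_h^{2}\ge R_0^{2}$; it is stated only to record consistency with the local threshold of Theorem~\ref{th3Ar3}, while $\mathcal{N}>1$ is what makes $\mathcal{E}_1$ exist and lie in $\mathcal{D}$ (cf.\ \eqref{TEandDFE}).

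I expect the main obstacle to be condition~(4): because $\bar{\mathcal{A}_2}$ is a not-necessarily-tight bound and the inequalities defining $\mathcal{G}$ are fairly weak, some care is needed to argue that $\mathcal{A}_2(\bar x)=\bar{\mathcal{A}_2}$ can occur only on the disease-free face $X_I=0$. The Schur-complement algebra in condition~(5) is routine but must be carried through carefully so that the unwieldy determinant inequality collapses to exactly \eqref{R_car3}.
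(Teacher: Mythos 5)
Your proposal follows essentially the same route as the paper: the Kamgang--Sallet theorem applied to the decomposition \eqref{model3Ar3}, with condition (5) checked via the block-Metzler lemma of Appendix~\ref{Ar3Ap4} and the Schur complement $D-CA^{-1}B$ collapsing to the threshold $R_c<1$ of \eqref{R_car3}. Your treatment is in fact slightly more careful than the paper's, since you explicitly address the second half of condition (4) (that $\mathcal{A}_2(\bar x)=\bar{\mathcal{A}_2}$ forces $\bar x$ onto the disease-free face), which the paper passes over without comment.
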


\begin{remark}
From \eqref{R_car3}, we have
$$
R^{2}_c=\dfrac{(\mu_h+\delta)^{2}}{\mu^{2}_h}R^{2}_0>R^{2}_0,
$$
showing that $R_c$ is not necessarily an optimal threshold parameter.
\end{remark}
\begin{remark}
Note that in the absence of disease--induced death, i.e. $\delta=0$, we have $R_c=R_0$. This suggests that the disease--induced death may be a cause of the occurence of the backward bifurcation phenomenon. 
\end{remark}
\begin{remark}
The previous results are of utmost importance, because they show that if at any time, through appropriate interventions (e.g. destruction of breeding sites, massive spraying, individual protection,...), we are able to lower $\mathcal{N}$ or $R_0$ and $R_c$ to less than 1 for a sufficiently long period, then the disease can disappear \cite{duch}. 
\end{remark}
Theorem \ref{GASDFE} means that for $R_{0}<R_c<1$, the DFE is the unique equilibrium
(no co-existence with an endemic equilibrium). If $R_{c}\leq R_0\leq 1$, then it is possible to have co-existence with endemic equilibria and thus, the occurrence of backward bifurcation phenomenon. 

The backward bifurcation phenomenon, in epidemiological systems, indicate the possibility of existence of at least one endemic equilibrium when $R_0$ is less than unity. Thus, the classical requirement of $R_0<1$ is, although necessary, no longer sufficient for disease elimination \cite{gaguab,arino2003,br,SharomietAl2007}. In some epidemiological models, it has been shown that the backward bifurcation phenomenon is caused by factors such as nonlinear incidence (the infection force), disease--induced death or imperfect vaccine \cite{gaguab,SharomietAl2007,bbnamc2011,bbnamc,duhucc,saetal}. To confirm whether or not the backward bifurcation phenomenon occurs in this case, one could use the approach developed in \cite{vawa02,duhucc,ccso}, which is based on the general centre manifold theorem \cite{guho}. We will explore this method in the next section.

\subsection{Endemic equilibria and bifurcation analysis}
\label{ssEEBIF}
\subsubsection{Existence of endemic equilibria}
We turn now to the existence of endemic equilibria. Let us introduce the following quantity $R_1=R^{2}_0\vert_{\delta=0}$. We proove the following result
\begin{theorem}\label{existenceEEAR3} 
We assume that $\mathcal{N}>1$, then

(i) In the absence of disease--induced death in human population ($\delta=0$),
model system \eqref{AR3} have
\begin{itemize}
\item[1.] an unique endemic equilibrium whenever $R_1>1$.
\item[2.] no endemic equilibrium otherwise. %and condition \eqref{condi1ar3} holds.
\end{itemize}
(ii) In presence of disease--induced death in human population ($\delta>0$), model system \eqref{AR3} could have
\begin{itemize}
\item[3.] at least one endemic equilibrium whenever $R_0>1$.
\item[4.] zero, one or more than one endemic equilibrium whenever $R_0<1$.
\end{itemize}
\end{theorem}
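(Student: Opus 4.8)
The plan is to reduce the equilibrium equations of \eqref{AR3} to one scalar equation in the human force of infection $x:=\lambda^{c}_h$ and to count its roots in the biologically admissible range. Adding the $S_v,E_v,I_v$ equations shows that $(E,L,P,N_v)$ obeys a subsystem blind to the infection; at an endemic equilibrium $N_v\neq 0$ (otherwise $S_v=E_v=I_v=0$, hence $\lambda^{c}_h=0$), so $\mathcal{N}>1$ pins it down to the values of \eqref{TEandDFE}, in particular $N_v=N^{0}_v$ and $\theta P=k_8N^{0}_v$. With $x=\lambda^{c}_h$, the human equilibrium relations give in cascade $S_h=\Lambda_h(\pi x+k_2)/D_h(x)$, $V_h=\xi S_h/(\pi x+k_2)$, $E_h=x\Lambda_h(\pi x+k_2+\pi\xi)/(k_3 D_h(x))$, $I_h=(\gamma_h/k_4)E_h$, $R_h=(\sigma/\mu_h)I_h$, where $D_h(x):=(x+k_1)(\pi x+k_2)-\omega\xi$ satisfies $D_h(x)>0$ for all $x\ge 0$ (since $D_h(0)=\mu_h(k_2+\xi)>0$ and $D_h'>0$), so $N_h(x)$ is an explicit positive rational function of $x$. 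Likewise $S_v=k_8N^{0}_v/(\lambda^{c}_v+k_8)$, $E_v=\lambda^{c}_vS_v/k_9$, $I_v=\gamma_v\lambda^{c}_vS_v/(k_8k_9)$, and because $k_9=k_8+\gamma_v$ one gets $S_v+E_v+I_v\equiv N^{0}_v$, whence $\eta_vE_v+I_v=\lambda^{c}_vN^{0}_v(\eta_vk_8+\gamma_v)/\big(k_9(\lambda^{c}_v+k_8)\big)$. Since $\lambda^{c}_v=(1-\alpha_1)a\beta_{vh}(\eta_hk_4+\gamma_h)E_h(x)/(k_4 N_h(x))$ is a rational function of $x$, substituting everything into $x=(1-\alpha_1)a\beta_{hv}(\eta_vE_v+I_v)/N_h$ yields a fixed-point equation $x=H(x)$; clearing the positive polynomial denominators turns it into $\mathcal{P}(x)=x\,\mathcal{Q}(x)=0$, where $x=0$ is the disease-free state and endemic equilibria are exactly the admissible roots of $\mathcal{Q}$. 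Three elementary facts drive the count: (a) for $x>0$ the sign of $H(x)-x$ is opposite to that of $\mathcal{Q}(x)$; (b) $H(0)=0$, $H'(0)=R_0^{2}$, and $\mathcal{Q}(0)=c_\star(1-R_0^{2})$ with $c_\star>0$ — with $R_1$ replacing $R_0^{2}$ when $\delta=0$; (c) along the equilibrium curve $\eta_vE_v+I_v\le N^{0}_v$ and $N_h\ge\bar N_h:=\Lambda_h/(\mu_h+\delta)$ (the latter because $I_h\le N_h$ forces $(\mu_h+\delta)I_h\le\Lambda_h$), so $H$ maps $[0,\bar x]$ into itself with $\bar x:=(1-\alpha_1)a\beta_{hv}N^{0}_v/\bar N_h$, and hence every positive root of $\mathcal{Q}$ automatically lies in $[0,\bar x]$.

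\emph{Case $\delta=0$.} Here $\dot N_h=\Lambda_h-\mu_hN_h$ forces $N_h\equiv N^{0}_h$ at equilibrium; the denominators simplify and $\mathcal{Q}$ becomes a quadratic $a_2x^{2}+a_1x+a_0$ with $a_2>0$ and $a_0=k_8\mu_h(k_2+\xi)(1-R_1)$. If $R_1>1$ then $a_0<0$, the product of the roots is negative, so $\mathcal{Q}$ has a unique positive root, admissible by (c): exactly one endemic equilibrium. If $R_1\le 1$ then $a_0\ge 0$, and I will further check $a_1>0$; all coefficients are then positive, no positive root exists, and there is no endemic equilibrium. Establishing $a_1>0$ reduces, after using $R_1\le 1$ to control the single negative term, to the elementary identity $(k_2+\pi k_1)(k_2+\pi\xi)-\pi\mu_h(k_2+\xi)=k_2^{2}+\pi^{2}k_1\xi+\pi k_2\xi+\pi\omega\xi\ge 0$, which follows from $k_1=\xi+\mu_h$, $k_2=\omega+\mu_h$ and $0\le\pi<1$. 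This settles (i).

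\emph{Case $\delta>0$.} Now $N_h(x)$ genuinely varies, and $\mathcal{Q}$ is a quartic with positive leading coefficient (the subtracted piece having lower degree) and $\mathcal{Q}(0)=c_\star(1-R_0^{2})$. If $R_0>1$ then $\mathcal{Q}(0)<0$ while, by $H(\bar x)\le\bar x$ and fact (a), $\mathcal{Q}(\bar x)\ge 0$, so the intermediate value theorem gives a root of $\mathcal{Q}$ in $(0,\bar x]$: at least one endemic equilibrium, which is statement~3. If $R_0<1$ then $\mathcal{Q}(0)>0$; an upward-opening quartic positive at $0$ may either remain positive on $[0,\bar x]$ (no endemic equilibrium) or dip below the axis and return, meeting it at two simple points or, in the degenerate case, at one double point, depending on the signs of the intermediate coefficients, which are not sign-definite. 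Hence zero, one, or more than one endemic equilibrium is possible, which is statement~4; that the backward branch is genuinely realised (so that at least one endemic equilibrium does coexist with the DFE for suitable $R_0<1$) will be confirmed by the centre-manifold computation carried out below.

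\emph{Main obstacle.} The real work is the bookkeeping of the cascade elimination and, above all, verifying that after clearing denominators the polynomial factors as $x\,\mathcal{Q}(x)$ with $\mathcal{Q}$ of the stated degree and constant term equal to a positive multiple of $1-R_0^{2}$ (respectively $1-R_1$); once this identification is secured, each root count is immediate, the positivity of $a_1$ in the $\delta=0$, $R_1\le 1$ subcase being the only step that needs the explicit expressions of the $k_i$.
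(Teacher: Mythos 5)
Your proposal is correct and follows essentially the same route as the paper: eliminate all state variables in favour of the equilibrium force of infection, obtain $x\,\mathcal{Q}(x)=0$ with $\mathcal{Q}$ quadratic (for $\delta=0$) or quartic (for $\delta>0$) whose constant term is a positive multiple of $1-R_1$ (resp.\ $1-R_0^{2}$), and count roots from the signs of the coefficients. The only cosmetic discrepancy is the sign of the quartic's leading coefficient (the paper's $c_4<0$ corresponds to your $-\mathcal{Q}$), and your two refinements --- the identity $(k_2+\pi k_1)(k_2+\pi\xi)-\pi\mu_h(k_2+\xi)\ge 0$ in place of the paper's argument that $R_b<1$ would force $\beta_{vh}<0$, and the intermediate-value argument on the invariant interval $[0,\bar{x}]$, which additionally settles admissibility of the root when $R_0>1$ --- are both valid.
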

\begin{proof}
See appendix \ref{Ar3Ap5}.
\end{proof}
Note that case 4 of Theorem \ref{existenceEEAR3} indicate the possibility of existence of at least one endemic equilibrium for $R_0<1$ and hence the potential occurrence of a backward bifurcation phenomenon. 

\subsubsection{Backward bifurcation analysis}
In the following, we use the center manifold theory \cite{BlaynehaetAl,vawa02,duhucc,ccso} to explore the possibility of backward bifurcation in \eqref{AR3}. To do so, a bifurcation parameter $\beta^{*}_{hv}$ is chosen, by solving for $\beta_{hv}$ from $R_0=1$, giving
\begin{equation}
\label{bifparam}
\beta^{*}_{hv}=\dfrac{k_{3}k_{4}k_{8}k_{9}\mu_{b}\Lambda_{h}\left(\xi+k_{2}\right)\left(k_{6}K_{L}+
 sK_{E}\right)}{a^2(1-\alpha_1)^{2}\beta_{vh}\mu_{h}k_{5}k_{6}\left(\gamma_{h}+k
 _{4}\,\eta_{h}\right)\,\left(\gamma_{v}+k_{8}\,\eta_{v}\right)
 \left(\pi\xi+k_{2}\right)K_{E}K_{L}(\mathcal{N}-1)}.
\end{equation}
Let $J_{\beta^{*}_{hv}}$ denotes the Jacobian of the system \eqref{AR3} evaluated at the DFE ($\mathcal{E}_1$ ) and with $\beta_{hv}=\beta^{*}_{hv}$. Thus,
\begin{equation}
\label{Jacm1Ar3}
J_{\beta^{*}_{hv}}=\left( \begin{array}{cccc}
J_1&J_2\\
J_3&J_4
\end{array} \right),
\end{equation} 
where\\
{\footnotesize
$
J_{1}=\left( \begin{array}{ccccc}
-k_1&\omega&0&0&0\\
\xi&-k_2&0&0&0\\
0&0&-k_3&0&0\\
0&0&\gamma_h&-k_4&0\\
0&0&0&\sigma&-\mu_h\\
\end{array} \right),
$
$
J_4=\left( \begin{array}{ccccccccccc}
-k_8&0&0&0&0&\theta\\
0&-k_9&0&0&0&0\\
0&\gamma_v&-k_8&0&0&0\\
K_1&K_1&K_1&-K_2&0&0\\
0&0&0&K_3&-K_4&0\\
0&0&0&0&l&-k_7
\end{array} \right) .
$ 
 \\
$
J_{2}=\left( \begin{array}{cccccc}
0&-\dfrac{a(1-\alpha)\beta^{*}_{hv}\eta_vS^{0}_h}{N^{0}_h}&-\dfrac{a(1-\alpha)\beta^{*}_{hv}S^{0}_h}{N^{0}_h}&0&0&0\\
0&-\dfrac{a(1-\alpha)\beta^{*}_{hv}\pi\eta_vV^{0}_h}{N^{0}_h}&-\dfrac{a(1-\alpha)\beta^{*}_{hv}\pi V^{0}_h}{N^{0}_h}&0&0&0\\
0&\dfrac{a(1-\alpha)\beta^{*}_{hv}\eta_vH^{0}}{N^{0}_h}&\dfrac{a(1-\alpha)\beta^{*}_{hv}H^{0}}{N^{0}_h}&0&0&0\\
0&0&0&0&0&0\\
0&0&0&0&0&0\\
\end{array} \right),
$\\ 
}
{\footnotesize
$
J_{3}=\left( \begin{array}{ccccccccccc}
0&0&-\dfrac{a(1-\alpha)\beta_{vh}\eta_hS^{0}_v}{N^{0}_h}&-\dfrac{a(1-\alpha)\beta_{vh}S^{0}_v}{N^{0}_h}&0\\
0&0&\dfrac{a(1-\alpha)\beta_{vh}\eta_hS^{0}_v}{N^{0}_h}&\dfrac{a(1-\alpha)\beta_{vh}S^{0}_v}{N^{0}_h}&0\\
0&0&0&0&0\\
0&0&0&0&0\\
0&0&0&0&0\\
0&0&0&0&0
\end{array} \right),
$
}

with $H^{0}=S^{0}_h+\pi V^{0}_h$, $K_1=\mu_b\left(1-\dfrac{E^{*}}{K_E}\right)$, $K_2=k_5+\dfrac{\mu_b}{K_E}S^{0}_v$. $K_3=s\left(1-\dfrac{L^{*}}{K_L}\right)$, and 
$K_4=\left(k_6+\dfrac{sE^{*}}{K_L}\right)$.

Note that the system \eqref{AR3}, with $\beta_{hv}=\beta^{*}_{hv}$, has a hyperbolic equilibrium point (i.e., the linearized system \eqref{AR3} has a simple eigenvalue with zero real part and all other eigenvalues
have negative real part). Hence, the center manifold theory \cite{guho,Carr} can be used to analyze the dynamics of the model \eqref{AR3} near $\beta_{hv}=\beta^{*}_{hv}$. The technique in Castillo-Chavez
and Song (2004) \cite{ccso} entails finding the left and right eigenvectors of the linearized system
above as follows.

The left eigenvector composants of $J_{\beta^{*}_{hv}}$, which correspond to the uninfected states are zero (see Lemma 3 in \cite{vawa02}). Thus a nonzero composants correspond to the infected states. It follows that the matrix $J_{\beta^{*}_{hv}}$ has a left eigenvector given by ${\bf v} = (v_1,v_2,\hdots,v_{11} )$, where 
$$
\begin{array}{l}
v_1=v_2=v_5=v_6=v_9=v_{10}=v_{11}=0;\,\,v_3=\dfrac{k_8N^{0}_h}{a(1-\alpha_1)\beta^{*}_{hv}H^{0}}v_8;\\
v_4=\dfrac{a(1-\alpha_1)\beta_{vh}S^{0}_v(\eta_vk_8+\gamma_v)}{k_4k_9N^{0}_h}v_8,\,\,
v_7=\dfrac{(\eta_vk_8+\gamma_v)}{k_9}v_8,\;\;v_8=v_8>0.
\end{array} 
$$
The system \eqref{AR3} has a right eigenvector given by ${\bf w} = (w_1 , w_2 ,\hdots, w_{11} )^{T}$ , where
$$
\begin{array}{l}
w_{11}>0,\;\; w_8>0,\\
w_{10}=\dfrac{k_7}{l}w_{11}, w_9=\dfrac{K_1\theta}{k_5k_8}w_{11},\;\;
w_7=\dfrac{k_8}{\gamma_v}w_8,\;\;w_6=\dfrac{\theta}{k_8}w_{11}-\dfrac{k_9}{\gamma_v}w_8,\;\;\\
w_5=\dfrac{\gamma_h\sigma k_8k_9N^{0}_h}{a(1-\alpha_1)\beta_{vh}\mu_h\gamma_vS^{0}_v(\eta_hk_4+\gamma_h)}w_8,\;\;
w_4=\dfrac{\mu_h}{\sigma}w_5,\;\;w_3=\dfrac{k_4}{\gamma_h}w_4,\;\;\\
w_2=-\dfrac{a(1-\alpha_1)\beta^{*}_{hv}(\eta_vk_8+\gamma_v)}{\gamma_vN^{0}_h(k_1k_2-\xi\omega)}(\xi S^{0}_h+k_1V^{0}_h)w_8,\\
w_1=\dfrac{\omega}{k_1}w_2-\dfrac{a(1-\alpha_1)\beta^{*}_{hv}S^{0}_h}{k_1N^{0}_h}\left(\eta_vw_7+w_8\right) .
\end{array} 
$$

Theorem 4.1 in Castillo-Chavez and Song \cite{ccso} is then applied to establish the existence of backward bifurcation in \eqref{AR3}. To apply such a theorem, it is convenient to let $f_k$ represent the right-hand side of the $k^{th}$ equation of the system \eqref{AR3} and let $x_k$ be the state variables whose derivative is given by the $k^{th}$ equation for $k = 1,\hdots,11$. The local bifurcation analysis near the bifurcation point ($\beta_{hv}=\beta^{*}_{hv}$) is then determined by the signs of two associated constants, denoted by $\mathcal{A}_1$ and $\mathcal{A}_2$, defined by
\begin{equation}
\label{ccsoAr3opc}
\mathcal{A}_1=\sum\limits_{k,i,j=1}^{n}v_kw_iw_j\dfrac{\partial^{2}f_k(0,0)}{\partial x_i\partial x_j}\qquad and\qquad
\mathcal{A}_2=\sum\limits_{k,i=1}^{n}v_kw_i\dfrac{\partial^{2}f_k(0,0)}{\partial x_i\partial \phi}
\end{equation}
with $\phi=\beta_{hv}-\beta^{*}_{hv}$. It is important to note that in $f_k(0,0)$, the first zero corresponds to the disease--free equilibrium, $\mathcal{E}_{1}$, for the system \eqref{AR3}. Since $\beta_{hv}=\beta^{*}_{hv}$ is the bifurcation parameter, it follows from $\phi=\beta_{hv}-\beta^{*}_{hv}$ that $\phi=0$ when $\beta_{hv}=\beta^{*}_{hv}$ which is the second component in $f_k(0,0)$.

It follows then, after some algebraic manipulations, that
$$
\mathcal{A}_1=\Gamma_1-\Gamma_2
$$
with 
%{\footnotesize
\[
\begin{split}
\Gamma_1&=\dfrac{a(1-\alpha_1)\beta^{*}_{hv}(2V^{0}_hw_1+\pi S^{0}_hw_2)}{(N^{0}_h)^{2}}(\eta_vw_7+w_8)v_3\\
&+\dfrac{a(1-\alpha_1)\beta_{vh}S^{0}_v}{N^{0}_h}\left[ (\eta_hw_3+w_4)\dfrac{1}{S^{0}_v}
+\left(\eta_hw_3+\dfrac{1}{S^{0}_v}w_4\right)\right]w_6v_7,\\
\end{split}
\]
\[
\begin{split}
\Gamma_2&=2\dfrac{a(1-\alpha_1)\beta_{vh}S^{0}_v}{(N^{0}_h)^{2}}\left( \sum\limits_{i=1}^{5}w_i\right)(\eta_hw_3+w_4)v_7\\
&+\dfrac{a(1-\alpha_1)\beta^{*}_{hv}(S^{0}_h+\pi V^{0}_h)(N^{0}_h+1)}{(N^{0}_h)^{2}}\left( \sum\limits_{i=3}^{5}w_i\right)(\eta_vw_7+w_8)v_3\\
\end{split}
\]
%}

and
$$
\mathcal{A}_2=\dfrac{a(S^{0}_h+\pi V^{0}_h)}{N^{0}_h}\left(\eta_vw_7+w_8 \right)v_3
$$
Hence, the coefficient $\mathcal{A}_1>0$ if and only if
\begin{equation}
\label{condofbacwardAr3opc}
\Gamma_1>\Gamma_2
\end{equation}
Note that the coefficient $\mathcal{A}_2$ is automatically positive. Thus, using Theorem 4.1 in \cite{ccso}, the following result is established.
\begin{theorem}
\label{thbifARopt}
The model \eqref{AR3} exhibits a backward bifurcation at $R_0 = 1$ whenever the inequality \eqref{condofbacwardAr3opc} holds. If the reversed inequality holds, then the bifurcation at $R_0 = 1$ is
forward.
\end{theorem}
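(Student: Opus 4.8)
The plan is to apply Theorem~4.1 of Castillo--Chavez and Song \cite{ccso} directly, since the material above has already put the system into the form required by that theorem. First I would record its hypotheses and verify them for \eqref{AR3}. Writing $f=(f_1,\dots,f_{11})$ for the right-hand side of \eqref{AR3} and $\phi=\beta_{hv}-\beta^{*}_{hv}$ for the bifurcation parameter, the disease--free equilibrium $\mathcal{E}_1$ is an equilibrium for \emph{every} value of $\phi$ because its coordinates in \eqref{TEandDFE} do not involve $\beta_{hv}$; after translating $\mathcal{E}_1$ to the origin, this says $f(0,\phi)=0$ for all $\phi$. By the choice of $\beta^{*}_{hv}$ obtained from $R_0=1$, the Jacobian $J_{\beta^{*}_{hv}}$ in \eqref{Jacm1Ar3} has a simple zero eigenvalue with all other eigenvalues having negative real part (the hyperbolicity remark already made), and the corresponding right and left eigenvectors are the vectors $\mathbf{w}$ and $\mathbf{v}$ displayed above. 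In particular, by Lemma~3 of \cite{vawa02}, the components of $\mathbf{v}$ attached to the non-transmitting compartments vanish, so $v_k\neq 0$ only for $k\in\{3,4,7,8\}$; this is what makes the first sum in \eqref{ccsoAr3opc} manageable.

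Next I would compute the two bifurcation constants $\mathcal{A}_1$ and $\mathcal{A}_2$ of \eqref{ccsoAr3opc} at the DFE. Since only $f_3,f_4,f_7,f_8$ carry nonzero weights $v_k$, and $f_4$ ($\dot{I}_h$) and $f_8$ ($\dot{I}_v$) are linear, only $f_3$ ($\dot{E}_h$) and $f_7$ ($\dot{E}_v$) contribute second derivatives. These two equations contain the bilinear incidence terms $\lambda^{c}_h[S_h+\pi V_h]$ and $\lambda^{c}_v S_v$ with $\lambda^{c}_h=(1-\alpha_1)a\beta_{hv}(\eta_vE_v+I_v)/N_h$, $\lambda^{c}_v=(1-\alpha_1)a\beta_{vh}(\eta_hE_h+I_h)/N_h$, and $N_h=S_h+V_h+E_h+I_h+R_h$. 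Consequently the nonzero second partials at $\mathcal{E}_1$ are of two kinds: the direct cross terms pairing an infected vector variable ($E_v$ or $I_v$) with $S_h$ or $V_h$ in $f_3$, and an infected human variable ($E_h$ or $I_h$) with $S_v$ in $f_7$; and the denominator terms coming from differentiating the factor $1/N_h$, which pair an infected variable with \emph{any} human compartment and which carry an extra minus sign together with a factor involving $N^{0}_h$. Substituting the explicit components of $\mathbf{w}$ and $\mathbf{v}$, sorting the positive contributions into $\Gamma_1$ and the negative ones into $\Gamma_2$, yields $\mathcal{A}_1=\Gamma_1-\Gamma_2$; the same substitution in the second sum of \eqref{ccsoAr3opc}, where only $\partial^{2}f_3/\partial x_i\partial\phi$ survives, gives $\mathcal{A}_2=a(S^{0}_h+\pi V^{0}_h)(\eta_vw_7+w_8)v_3/N^{0}_h$, which is positive since $v_3,w_7,w_8>0$.

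With $\mathcal{A}_2>0$ established unconditionally, Theorem~4.1 of \cite{ccso} gives the dichotomy immediately: if $\mathcal{A}_1>0$ the bifurcation at $R_0=1$ is backward (a locally asymptotically stable endemic equilibrium appears for $R_0$ slightly below unity, coexisting with the locally stable DFE), while if $\mathcal{A}_1<0$ it is forward. Since $\mathcal{A}_1>0$ is equivalent to $\Gamma_1>\Gamma_2$, i.e.\ to \eqref{condofbacwardAr3opc}, the theorem follows.

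I expect the main obstacle to be the careful enumeration and sign bookkeeping of the denominator second derivatives in the middle step: differentiating $1/N_h$ couples the forces of infection to all five human compartments and is exactly what produces the sums $\sum_{i=1}^{5}w_i$ and $\sum_{i=3}^{5}w_i$ and the factor $(N^{0}_h+1)$ in $\Gamma_2$, so it is easy to drop or mis-sign one of them. A useful consistency check on the final expressions is that setting $\delta=0$, or replacing the standard incidence by mass action, must force $\mathcal{A}_1<0$, in agreement with the earlier remarks and with Theorem~\ref{existenceEEAR3}.
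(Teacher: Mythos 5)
Your proposal is correct and follows essentially the same route as the paper: the argument there is precisely the Castillo-Chavez--Song center manifold computation, with the left/right eigenvectors $\mathbf{v}$, $\mathbf{w}$ of $J_{\beta^{*}_{hv}}$, the bifurcation constants $\mathcal{A}_1=\Gamma_1-\Gamma_2$ and $\mathcal{A}_2>0$, and the conclusion that the bifurcation is backward exactly when $\Gamma_1>\Gamma_2$. Your additional observations (that only $f_3$ and $f_7$ contribute second derivatives, and that the denominator terms from $1/N_h$ generate the sums in $\Gamma_2$) match the structure of the paper's computation.
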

The associated bifurcation diagrams are depicted in Figures~\ref{backwardar3VacPerfect} and~ \ref{Forwardar3VacPerfect}. Parameter values used in figure \ref{backwardar3VacPerfect} correspond to those in Table \ref{vaueR0ar3}, except $\Lambda_h=10$, $\epsilon=1$, $\beta_{vh}=0.8$, $\eta_h=1$, $\eta_v=1$, $\sigma=0.01428$, $\delta=1$, $\alpha_1=0.001$, $\alpha_2=1$, $c_m=0.0001$, $\Gamma_E=10^{5}$, $\Gamma_L=50000$. In this case the conditions required by Theorem~\ref{thbifARopt}, are satisfied: $\mathcal{A}_1=0.0114>0$ and $\mathcal{A}_2=1.1393>0$. 

Parameter  values used in figure \ref{Forwardar3VacPerfect} correspond to those in Table~\ref{vaueR0ar3}, except $\Lambda_h=10$, $\beta_{vh}=0.8$, $\eta_h=\eta_v=0=\delta=c_m=\alpha_1=0$, $\alpha_2=1$, $\Gamma_E=10^{5}$, $\Gamma_L=50000$.  We also have $\mathcal{A}_1=-2.4223<0$ and $\mathcal{A}_2=0.8333>0$.
\begin{figure}[t!]
\begin{center}
\includegraphics[width=\textwidth]{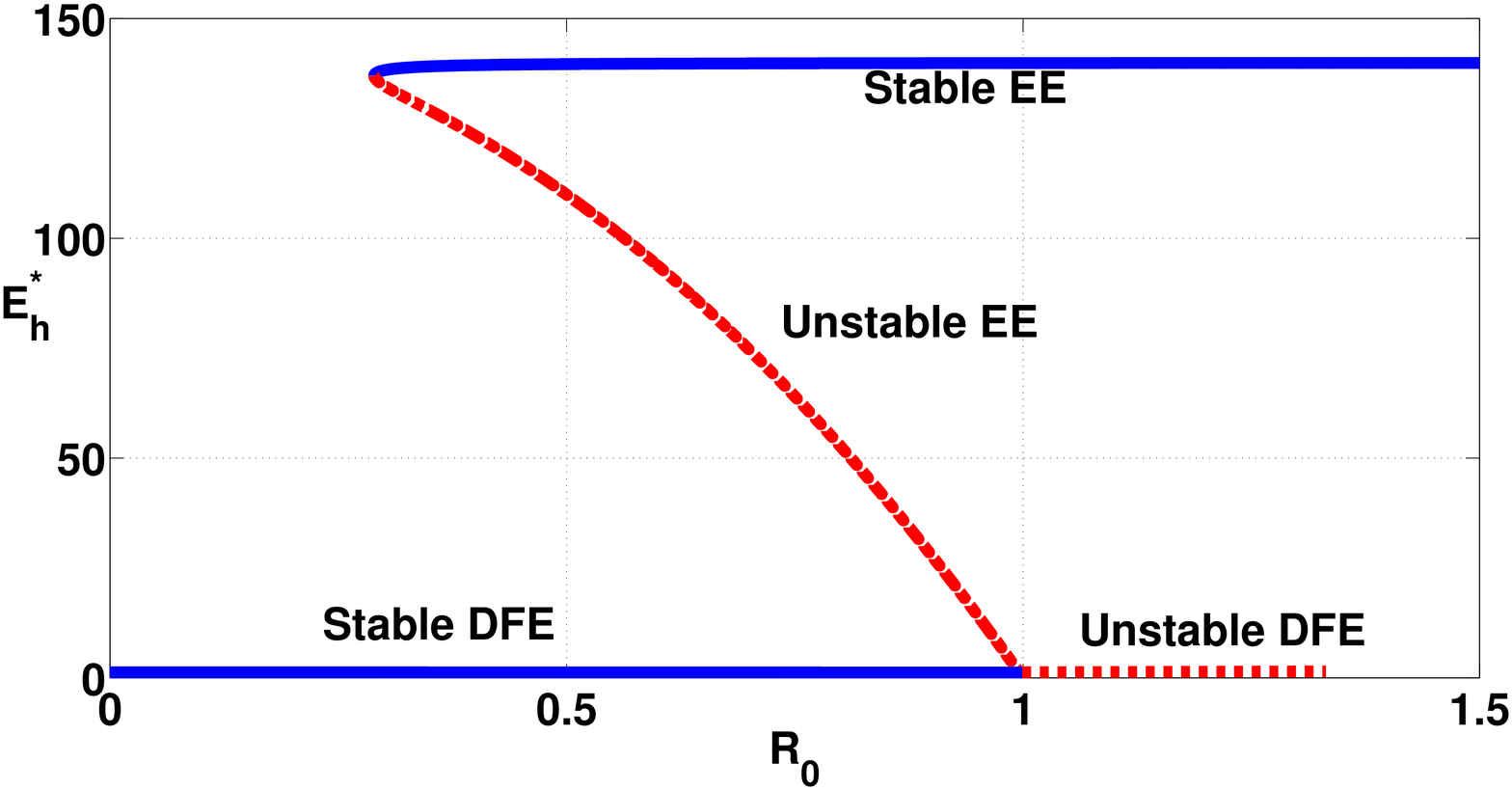}
\includegraphics[width=\textwidth]{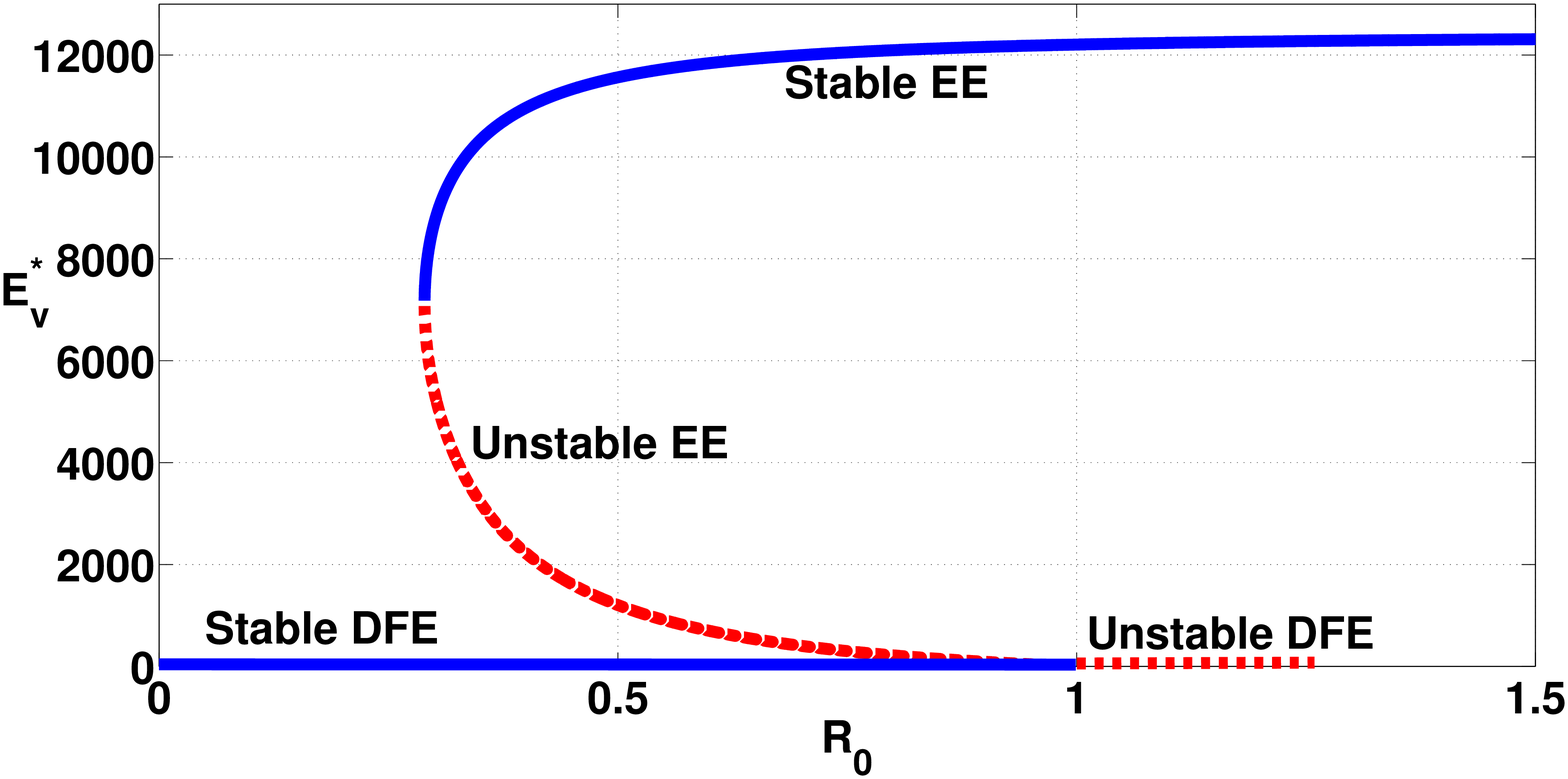}
\caption{The backward bifurcation curves for model system \eqref{AR3withoutVac} in the $(R_{0}, E^{*}_{h})$, and $(R_{0}, E^{*}_{v})$ planes. The parameter $\beta_{hv}$ is varied in the range [0, 0.2810] to allow $R_0$ to vary in the range [0, 1.5]. Two endemic equilibrium points coexist for values of $R_0$ in the range (0.2894, 1) (corresponding to the range (0.0105, 0.1249) of $\beta_{hv}$). The notation EE and DFE stand for endemic equilibrium and disease free equilibrium, respectively.  Solid line represent stable equilibria and dash line stands for  unstable equilibria. \label{backwardar3VacPerfect}}
\end{center}
\end{figure}

\begin{figure}[t!]
\begin{center}
\includegraphics[width=\textwidth]{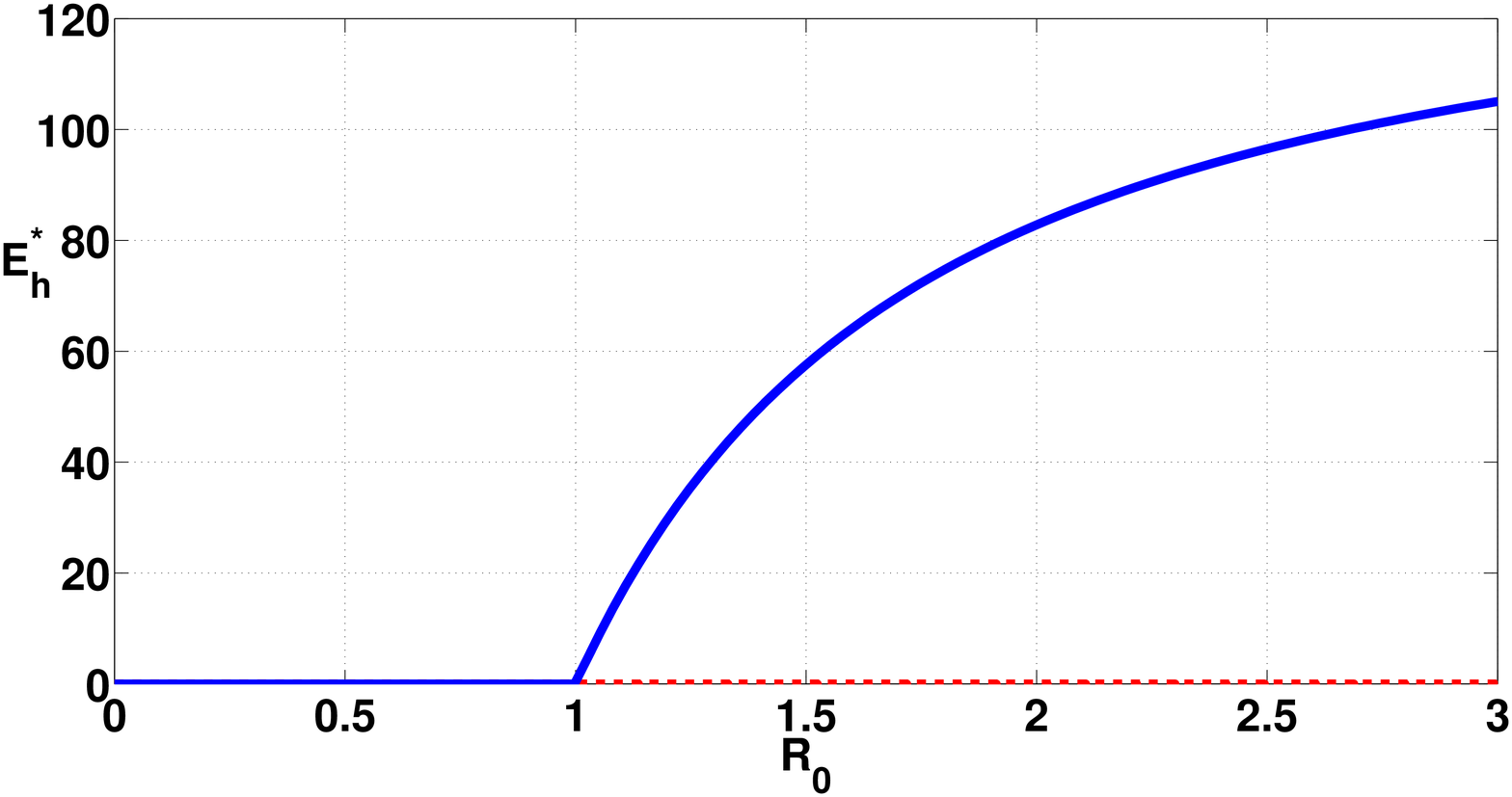}
\includegraphics[width=\textwidth]{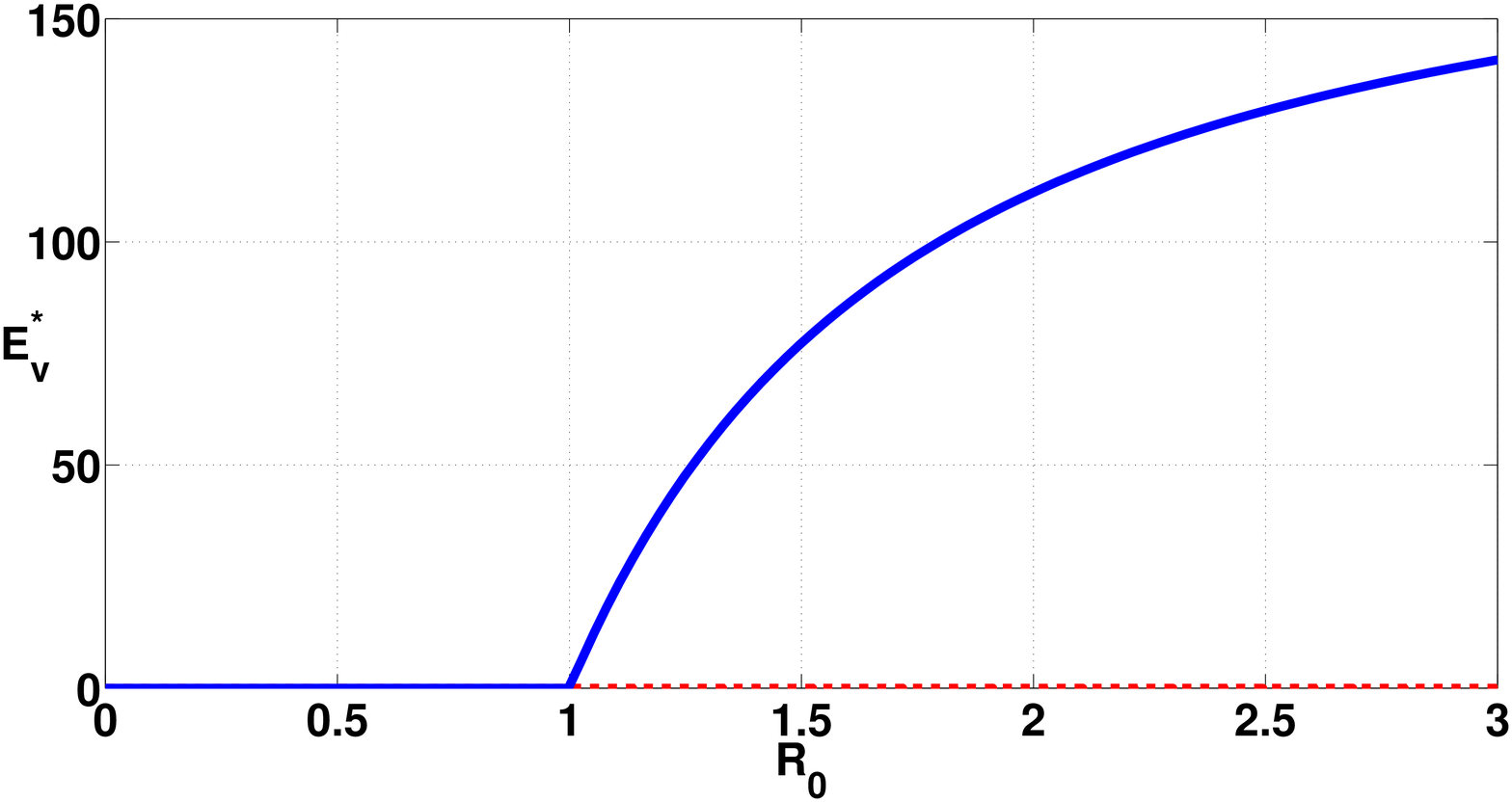}
\caption{The forward bifurcation curves for model system \eqref{AR3withoutVac} in the $(R_{0}, E^{*}_{h})$, and $(R_{0}, E^{*}_{v})$ planes. Solid line represents stable equilibria and dash line stands for  unstable equilibria. \label{Forwardar3VacPerfect}}
\end{center}
\end{figure}
The occurrence of the backward bifurcation can be also seen in Figure~\ref{EhtBackward}. Here, $R_0$ is less than the transcritical bifurcation threshold ($R_0=0.29 < 1$), but the solution of the model~\ref{AR3} can approach either the endemic equilibrium point or the disease-free equilibrium point, depending on the initial condition.
\begin{figure}[h!]
\begin{center}
\includegraphics[width=\textwidth]{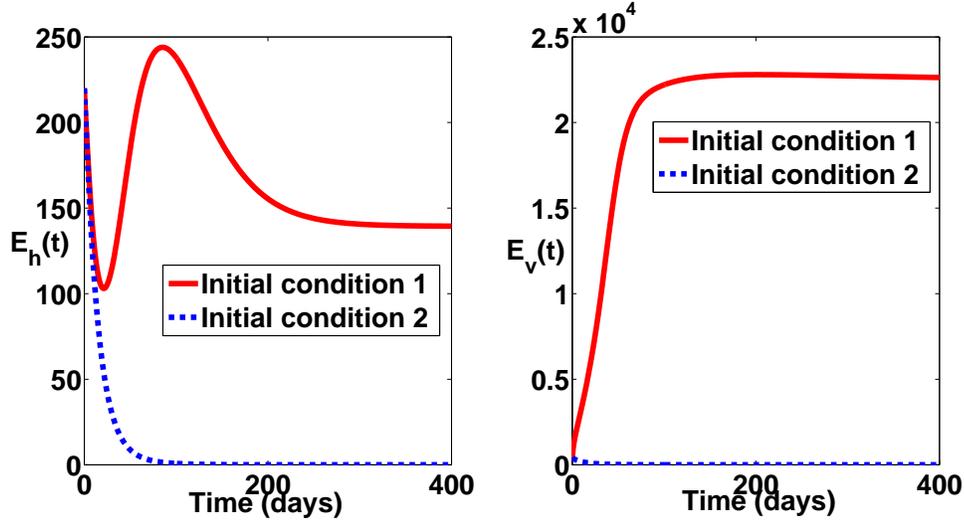}
\caption{Solutions of model \eqref{AR3} of the number of infected humans, $E_h$, and the number of infected vectors, $E_v$, for parameter values given in the bifurcation diagram in Figure~\ref{backwardar3VacPerfect} with $\beta_{hv}=0.0105$. So $R_0=0.29<1$, for two different set of initial conditions. The first set of initial conditions (corresponding to the solid trajectory) is $S_h=700$, $V_h=10$, $E_h=220$, $I_h=100$, $R_h=60$, $S_v=3000$, $E_v=400$, $I_v=120$, $E=10000$, $L=5000$ and $P=3000$. The second set of initial conditions (corresponding to the dotted trajectory) is $S_h=489100$, $V_h=10$, $E_h=220$, $I_h=100$, $R_h=60$, $S_v=3000$, $E_v=400$, $I_v=120$, $E=10000$, $L=5000$ and $P=3000$. The solution for initial condition~1 approaches the locally asymptotically stable endemic equilibrium point, while the solution for initial condition~2 approaches the locally asymptotically stable DFE.} \label{EhtBackward}
\end{center}
\end{figure}

From theorem \ref{existenceEEAR3}, item (i), it follows that the disease-induced death in human ($\delta$) may be a cause of the occurence of backward bifurcation phenomenon. In the following, we show that the backward bifurcation phenomenon is caused by the disease-induced death in human and the standard incidence functions ($\lambda^{c}_h$ and $\lambda^{c}_v$). 
\subsection{The different causes of the backward bifurcation}
\label{causeofBackward}
The occurrence of backward bifurcation phenomenon in epidemiological models, is caused by three factors: the presence of an imperfect vaccine, the presence of the death induced by the disease, and the standard incidence rates. In this section, we will consider two variants of the model \eqref{AR3} (the corresponding model without vaccination, and the corresponding model with mass action incidence), to determine the causes of this phenomenon.
\subsubsection{Analysis of the model without vaccination} 
The model without vaccination is given by
\begin{equation}
\label{AR3withoutVac}
\left\lbrace  \begin{array}{ll}
\dot{S}_h&=\Lambda_h-\left( \lambda^{c}_h+\mu_h\right)S_h\\
\dot{E}_h&=\lambda^{c}_hS_h-(\mu_h+\gamma_h)E_h\\
\dot{I}_h&=\gamma_hE_h-(\mu_h+\delta+\sigma)I_h\\
\dot{R}_h&=\sigma I_h-\mu_hR_h\\
\dot{S}_v&=\theta P-\lambda^{c}_vS_v-(\mu_v+c_m)S_v\\
\dot{E}_v&=\lambda^{c}_vS_v-(\mu_v+\gamma_v+c_m)E_v\\
\dot{I}_v&=\gamma_vE_v-(\mu_v+c_m)I_v\\
\dot{E}&=\mu_b\left(1-\dfrac{E}{\alpha_2\Gamma_{E}} \right)(S_v+E_v+I_v)-(s+\mu_E+\eta_1)E\\
\dot{L}&=sE\left(1-\dfrac{L}{\alpha_2\Gamma_{L}} \right)-(l+\mu_L+\eta_2)L\\
\dot{P}&=lL-(\theta+\mu_P)P\\
\end{array}\right.
\end{equation}
where $\lambda^{c}_h$ and $\lambda^{v}_v$ are given at section \ref{sec2ar3opc}. Model system \eqref{AR3withoutVac} is defined in the positively-invariant set 
\[
\begin{split}
\mathcal{D}_{1}&=\left\lbrace
(S_h,E_h,I_h,R_h,S_v,E_v,I_v,E,L,P)\in\mathbb R^{10}:\right.\\
&\left. N_h\leq\Lambda_h/\mu_h; E\leq K_E; L\leq K_L;P\leq\dfrac{lK_L}{k_7};N_v\leq \dfrac{\theta lK_L}{k_7k_8}\right\rbrace.\\
\end{split}
\]
Without lost of generality, we assume that $\mathcal{N}>1$. The corresponding disease--free equilibria of model \eqref{AR3withoutVac} are given by
$\mathcal{E}^{nv}_{0}=\left(N^{0}_h,0,0,0,0,0,0,0,0,0\right)$ and\\ $\mathcal{E}^{nv}_{1}=\left(N^{0}_h,0,0,0,N^{0}_v,0,0,E,L,P\right)$ with
$N^{0}_h=\frac{\Lambda_h}{\mu_h}$ and $N^{0}_v$, $E$, $L$ and $P$ are the same, given by \eqref{TEandDFE}. The associated next generation matrices, $F_1$ and $V_1 $, are, respectively, given by\\
$
F_1=\left( \begin{array}{cccc}
0&0&a(1-\alpha_1)\beta_{hv}\eta_v&a(1-\alpha_1)\beta_{hv}\\
0&0&0&0\\
\dfrac{a(1-\alpha_1)\beta_{vh}\eta_vN^{0}_v}{N^{0}_h}&\dfrac{a(1-\alpha_1)\beta_{vh}N^{0}_v}{N^{0}_h}&0&0\\
0&0&0&0
\end{array}\right)$ and\\
$
V_1=\left(\begin{array}{cccc}
k_3&0&0&0\\
-\gamma_h&k_4&0&0\\
0&0&k_9&0\\
0&0&-\gamma_v&k_8
\end{array}\right).
$

It follows that the associated reproduction number for the model without vaccination, denoted by
$R_{nv}=\rho(F_1V^{-1}_1)$, is given by
\begin{equation}
\label{RosansVacar3}
R_{nv}=\sqrt{\dfrac{a^2(1-\alpha_1)^{2}\beta_{hv}\beta_{vh}(\gamma_{h}+k_4\eta_h)(\gamma_v+k_8\eta_v)N^{0}_{v}}{k_{3}k_{4}k_{8}k_{9}N^{0}_{h}}}.
\end{equation}
Using Theorem 2 of \cite{vawa02}, we establish the following result:
\begin{theorem}
\label{lodfesansVac}
Assumed that $\mathcal{N}>1$. For basic arboviral model without vaccination, given by \eqref{AR3withoutVac}, the corresponding disease--free equilibrium is LAS if $R_{nv}<1$, and unstable if $R_{nv}>1$.
\end{theorem}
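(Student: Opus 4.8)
The plan is to apply Theorem~2 of van den Driessche and Watmough \cite{vawa02} directly to system \eqref{AR3withoutVac}, using the next-generation decomposition. First I would split the right-hand side of \eqref{AR3withoutVac} into the vector $\mathcal{F}$ of \emph{new infection} terms and the vector $\mathcal{V}$ of remaining transition terms, on the transmitting compartments $x_I=(E_h,I_h,E_v,I_v)^T$: the only genuinely new-infection terms are $\lambda^{c}_hS_h$ entering $\dot E_h$ and $\lambda^{c}_vS_v$ entering $\dot E_v$, while the progressions $\gamma_hE_h$, $\gamma_vE_v$ and all removals with rates $k_3,k_4,k_8,k_9$ go into $\mathcal{V}$. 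Linearising at $\mathcal{E}^{nv}_{1}$ and restricting to $x_I$ reproduces exactly the matrices $F_1$ and $V_1$ written just above the statement; in particular $V_1$ is (block) lower triangular with positive diagonal entries $k_3,k_4,k_9,k_8$ and nonpositive off-diagonal entries, hence a nonsingular M-matrix, and $F_1\ge 0$, so $F_1V_1^{-1}\ge 0$ and $R_{nv}=\rho(F_1V_1^{-1})$ is well defined and equals \eqref{RosansVacar3}.

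Next I would check the remaining hypotheses of \cite{vawa02}. The nonnegativity of $\mathcal{F}$, the sign structure of $\mathcal{V}$, and the fact that $\mathcal{F}$ vanishes on the disease-free subspace are immediate from the form of the incidences $\lambda^{c}_h,\lambda^{c}_v$ and of the linear transfer terms. The substantive point is the last hypothesis: the disease-free subsystem obtained by setting $E_h=I_h=E_v=I_v=0$, namely the dynamics of $(S_h,R_h,S_v,E,L,P)$, must have $\mathcal{E}^{nv}_{1}$ as a locally asymptotically stable equilibrium. Here the $S_h$ and $R_h$ equations decouple and contribute eigenvalues $-\mu_h$ and $-\mu_h$, so the whole question reduces to the stability of the $4\times4$ block governing $(S_v,E,L,P)$ evaluated at the nonzero aquatic equilibrium given in \eqref{TEandDFE}.

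That $4\times4$ block is the main obstacle: because the $E$ and $L$ equations carry logistic factors $1-E/K_E$ and $1-L/K_L$, the Jacobian entries depend on the equilibrium values, so stability cannot be read off by inspection. I would handle it either by substituting the values of $E,L,P,N_v^{0}$ from \eqref{TEandDFE} and running a Routh--Hurwitz test on the resulting characteristic polynomial, or, more cleanly, by exhibiting a positive diagonal rescaling that makes this Metzler-type matrix strictly (column- or row-) diagonally dominant, which under $\mathcal{N}>1$ forces its stability modulus to be negative --- essentially the same computation already carried out for $\mathcal{E}_1$ in Appendix~\ref{Ar3Ap1}. With all hypotheses verified, Theorem~2 of \cite{vawa02} yields that $\mathcal{E}^{nv}_{1}$ is locally asymptotically stable when $R_{nv}<1$ and unstable when $R_{nv}>1$, which is the claim.
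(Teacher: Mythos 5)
Your proposal is correct and follows essentially the same route as the paper, which simply invokes Theorem~2 of \cite{vawa02} with the matrices $F_1,V_1$ displayed just before the statement. You are in fact more careful than the paper: you explicitly flag the need to verify that the positive aquatic equilibrium is stable under $\mathcal{N}>1$ (the disease-free-subsystem hypothesis of \cite{vawa02}), a step the paper leaves implicit.
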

\paragraph{Existence of endemic equilibria}$\,$

Here, the existence of endemic equilibria of the model \eqref{AR3withoutVac} will be explored. Let us set the following coefficients
\begin{equation}
\label{coeffEEsnsVac}
\begin{array}{l}
R_c=\dfrac{\left\lbrace 2k_{8}(k_{3}k_{4}-\delta\gamma_{h})+(\eta_hk_4+\gamma_h)a\mu_{h}(1-\alpha_1)\beta_{vh}\right\rbrace}{k_{3}k_{4}k_{8}},\\
\footnotesize
\begin{split}
d_2&=-k_{9}\mu_{b}\Lambda_{h}(sK_E+k_6K_L)\left(k_{3}k_{4}-\delta\gamma_{h}\right)
\left((\eta_hk_4+\gamma_h)a\mu_{h}(1-\alpha_1)\beta_{vh}+(k_{3}k_{4}-\delta\gamma_{h})k_{8}\right)<0,
\end{split}\\
\normalsize
d_1=k^{2}_{3}k^{2}_{4}k_{8}k_{9}(sK_E+k_6K_L)\mu_{b}\Lambda_{h}\mu_{h}(R_{nv}^{2}-R_c),\\
d_0=k^{2}_{3}k^{2}_{4}k_{8}k_{9}(sK_E+k_6K_L)\mu_{b}\Lambda_{h}\mu_{h}^2\left(R_{nv}^{2}-1\right).
\end{array}
\end{equation}
We claim the following:
\begin{theorem}
\label{EEsansVAc}
The arboviral diseases model without vaccination \eqref{AR3withoutVac} has:
\begin{itemize}
\item[(i)] a unique endemic equilibrium if $d_0>1\Leftrightarrow R_{nv}>1$;
\item[(ii)] a unique endemic equilibrium if $d_1>0$, and $d_0=0$ or $d^{2}_1-4d_2d_0=0$;
\item[(iii)] two endemic equilibria if $d_0<0$ (i.e. $R_{nv}<1$), $d_1>0$ (i.e $R^{2}_{nv}>R_c$) and 
$d^{2}_1-4d_2d_0>0$;
\item[(iv)] no endemic equilibrium if $d_0<0$ (i.e. $R_{nv}<1$) and $\delta=0$.
\item[(v)] no endemic equilibrium otherwise.
\end{itemize}
\end{theorem}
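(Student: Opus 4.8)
The plan is to reduce the problem to counting positive roots of a quadratic. First I would note that at any steady state of \eqref{AR3withoutVac}, adding the equations for $S_v,E_v,I_v$ gives $\theta P=k_8N_v$, and then the three equations for $E,L,P$ together with this relation form exactly the system that defines the breeding equilibrium; hence $E,L,P,N_v$ are forced to take the values listed in \eqref{TEandDFE}, which are unique and positive precisely because $\mathcal{N}>1$. This decouples the vector-development block, and in particular $N_v=N^{0}_v$ at every equilibrium.

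Next I would solve the remaining steady-state equations for the transmission compartments in terms of the two forces of infection $\lambda^{c}_h$ and $\lambda^{c}_v$: $S_h=\Lambda_h/(\lambda^{c}_h+\mu_h)$, $E_h=\lambda^{c}_hS_h/k_3$, $I_h=\gamma_h\lambda^{c}_hS_h/(k_3k_4)$, $R_h=\sigma I_h/\mu_h$, $S_v=k_8N^{0}_v/(\lambda^{c}_v+k_8)$, $E_v=\lambda^{c}_vS_v/k_9$, $I_v=\gamma_v\lambda^{c}_vS_v/(k_8k_9)$, together with $N_h=(\Lambda_h-\delta I_h)/\mu_h$ from $\dot N_h=0$. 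Substituting these into the two identities $\lambda^{c}_hN_h=a(1-\alpha_1)\beta_{hv}(\eta_vE_v+I_v)$ and $\lambda^{c}_vN_h=a(1-\alpha_1)\beta_{vh}(\eta_hE_h+I_h)$, I would use the second to express $\lambda^{c}_v$ as a rational function of $\lambda^{c}_h$, insert it into the first, clear denominators, and factor out $\lambda^{c}_h$; the factor $\lambda^{c}_h=0$ returns the DFE, and the remaining factor is a quadratic $d_2(\lambda^{c}_h)^2+d_1\lambda^{c}_h+d_0=0$. A bookkeeping computation — using $N^{0}_v=K_EK_Lk_5k_6(\mathcal{N}-1)/(\mu_b(K_Es+k_6K_L))$ and the identity $a^2(1-\alpha_1)^2\beta_{hv}\beta_{vh}(\gamma_h+k_4\eta_h)(\gamma_v+k_8\eta_v)\mu_hN^{0}_v/(k_3k_4k_8k_9\Lambda_h)=R^{2}_{nv}$ — matches the coefficients with \eqref{coeffEEsnsVac} up to a positive normalising factor, and shows that $d_2<0$ unconditionally (here the inequality $k_3k_4-\delta\gamma_h>0$, verified by expanding, is crucial), that the sign of $d_0$ equals that of $R^{2}_{nv}-1$, and that the sign of $d_1$ equals that of $R^{2}_{nv}-R_c$. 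Since every positive root $\lambda^{c}_h>0$ yields, through the formulas above, an equilibrium with all components positive and lying in $\mathcal{D}_1$, counting endemic equilibria is the same as counting positive roots of the quadratic.

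The rest is elementary root counting for $d_2x^2+d_1x+d_0$ with $d_2<0$. If $d_0>0$ (equivalently $R_{nv}>1$) the product of the roots $d_0/d_2$ is negative, so exactly one root is positive: item (i). If $d_1>0$ and $d_0=0$ the roots are $0$ and $-d_1/d_2>0$; if $d_1>0$ and $d_1^2-4d_2d_0=0$ there is a single double root $-d_1/(2d_2)>0$; in both cases there is exactly one positive root: item (ii). If $d_0<0$ (equivalently $R_{nv}<1$), $d_1>0$ (equivalently $R^{2}_{nv}>R_c$) and $d_1^2-4d_2d_0>0$, the two roots are real with positive product $d_0/d_2$ and positive sum $-d_1/d_2$, hence both positive: item (iii). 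For item (iv), when $\delta=0$ one reads from \eqref{coeffEEsnsVac} that $R_c=2+(\eta_hk_4+\gamma_h)a\mu_h(1-\alpha_1)\beta_{vh}/(k_3k_4k_8)>2$, so $d_0<0$ forces $R^{2}_{nv}<1<R_c$ and hence $d_1<0$; then $d_2,d_1,d_0$ are all negative and $d_2x^2+d_1x+d_0<0$ for every $x>0$, so there is no endemic equilibrium. Item (v) collects the remaining sign patterns ($d_0<0$ with $d_1\le 0$; $d_0<0$, $d_1>0$ but $d_1^2-4d_2d_0<0$; $d_0=0$ with $d_1\le 0$), in each of which the quadratic has no positive root.

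The main obstacle I anticipate is not conceptual but the length and sign-sensitivity of the elimination in the second step: reducing the coupled rational system in $(\lambda^{c}_h,\lambda^{c}_v,N_h)$ to exactly the coefficients of \eqref{coeffEEsnsVac} is a long computation, and one must track the (negative) normalising constant that makes $d_2<0$ so that it does not invert the advertised correspondence between the signs of $d_0$, $d_1$ and the thresholds $R_{nv}$, $R_c$. A secondary technical point is confirming that each positive root really delivers a biologically admissible equilibrium inside $\mathcal{D}_1$ — in particular $N_h>0$ — which again rests on $k_3k_4-\delta\gamma_h>0$.
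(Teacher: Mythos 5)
Your proposal follows essentially the same route as the paper: solve the steady-state equations in terms of the forces of infection, reduce to the quadratic $d_2(\lambda^{c,*}_h)^2+d_1\lambda^{c,*}_h+d_0=0$ with $d_2<0$, and count positive roots case by case. In fact your write-up supplies the sign analysis (items (ii)--(v), including the observation that $\delta=0$ forces $R_c>2$ and hence $d_1<0$ when $d_0<0$) that the paper's own proof leaves implicit after stating ``Clearly, $d_2<0$ and $d_0>0$ (resp.\ $d_0<0$) if $R_{nv}>1$ (resp.\ $R_{nv}<1$).''
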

\begin{proof}
Solving the equations in the model \eqref{AR3withoutVac} in terms of $\lambda^{c,*}_h$ and $\lambda^{c,*}_v$,
gives
\begin{equation}
\label{EEhsansvac}
\begin{array}{l}
S^{*}_h=\dfrac{\Lambda_h}{\mu_h+\lambda^{c,*}_{h}},\;\;\;
E^{*}_h=\dfrac{\lambda^{c,*}_{h}S^{*}_h}{k_3},\;\;I^{*}_h=\dfrac{\gamma_h\lambda^{c,*}_{h}S^{*}_h}{k_3k_4},\;\;R^{*}_h=\dfrac{\sigma\gamma_h\lambda^{c,*}_{h}S^{*}_h}{\mu_hk_3k_4},
\end{array}
\end{equation}
and
\begin{equation}
\label{EEvsansVac}
\begin{array}{l}
S^{*}_v=\dfrac{\theta P}{(\lambda^{c,*}_v+k_8)},\,\;\; E^{*}_v=\dfrac{\theta P\lambda^{c,*}_v}{k_9(\lambda^{c,*}_v+k_8)},\;\;I^{*}_v=\dfrac{\gamma_v\theta P\lambda^{c,*}_v}{k_8k_9(\lambda^{c,*}_v+k_8)},\\
E=\dfrac{\mu_b\theta K_EP}{(k_5k_8K_E+\mu_b\theta P)},\;\;
L=\dfrac{\mu_b\theta sK_EK_LP}{k_6K_L(k_5k_8K_E+\mu_b\theta P)+s\mu_b\theta K_EP},
\end{array}
\end{equation}
Substituting \eqref{EEhsansvac} and \eqref{EEvsansVac} into the expression of $\lambda^{*}_h$ and $\lambda^{*}_v$ and simplifying, shows that the nonzero equilibria of the model without vaccination satisfy the quadratic equation
\begin{equation}
\label{eqEEsansVac}
d_2(\lambda^{c,*}_h)^{2}+d_1\lambda^{c,*}_h+d_0=0
\end{equation}
where $d_i$, $i=0,1,2$, are given by \eqref{coeffEEsnsVac}.

Clearly, $d_2<0$ and $d_0>0$ (resp. $d_0<0$) if $R_{nv}>1$ (resp. $R_{nv}<1$). Thus Theorem \ref{EEsansVAc} is etablished.
\end{proof}
It is clear that cases (ii) and (iii) of theorem \ref{EEsansVAc} indicates the possibility of backward bifurcation (where the locally-asymptotically stable DFE co-exists with a locally-asymptotically stable endemic equilibrium when $R_{nv}<1$) in the model without vaccination \eqref{AR3withoutVac}. 

This is illustrated by simulating the model with the following set of parameter values (it should be stated that these parameters are chosen for illustrative purpose only, and may not necessarily be realistic epidemiologically): $\Lambda_h=5$, $\beta_{hv}=0.03$, $\eta_h=\eta_v=1$, $\delta=1$, $\sigma=0.01$, $c_m=0.1$, $\beta_{vh}=0.4$, $\alpha_1=0.7$ and $\alpha_2=0.5$. All other parameters are as in Table \ref{vaueR0ar3}.  With this set of parameters, $R_c=0.0216<1$, $R_{nv}=0.2725<1$ (so that $R_c<R_{nv}<1$). It follows: $d_2=-0.0263<0$, $d_1=4.8763\times 10^{-4}$ and $d_0=-3.5031\times 10^{-7}$, so that $d^{2}_1-4d_2d_0=2.0093\times 10^{-7}>0$. The resulting two endemic equilibria $\mathcal{E}^{nv}=(S^{*}_h,E^{*}_h,I^{*}_h,R^{*}_h,S^{*}_v,E^{*}_v,I^{*}_v,E,L,P)$, are:\\
%\scriptsize
$\mathcal{E}^{nv}_{1}=(281,70,5, 1207,5739,182,44,22180,10201,9977)$
 which is locally stable and\\
%\footnotesize
$\mathcal{E}^{nv}_{2}=(6333,67,4,1147,5936,37,2,22180,10201,9977)$
which is unstable.  
\begin{figure}[t!]
\begin{flushleft}
\includegraphics[width=\textwidth]{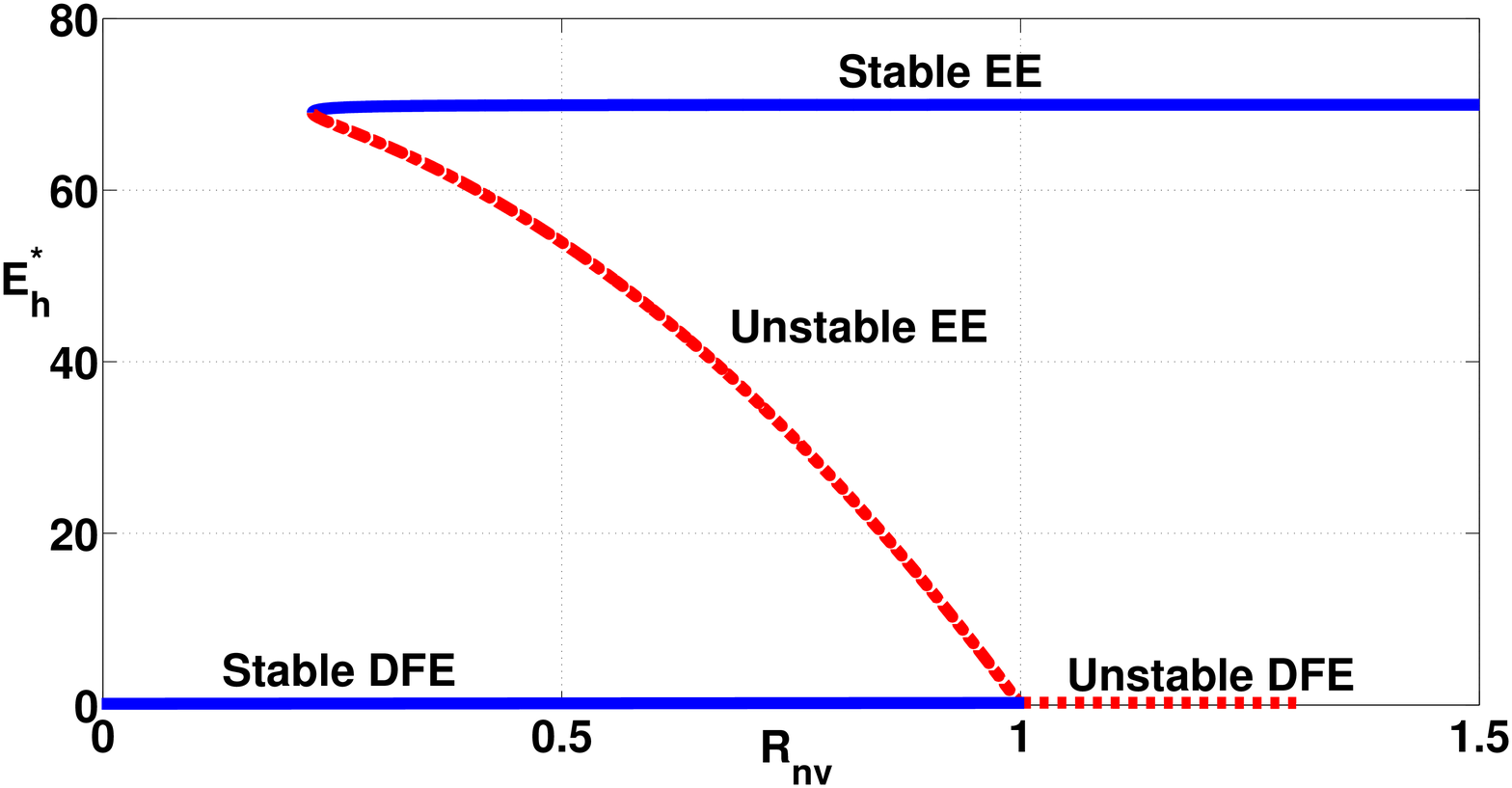}
\includegraphics[width=\textwidth]{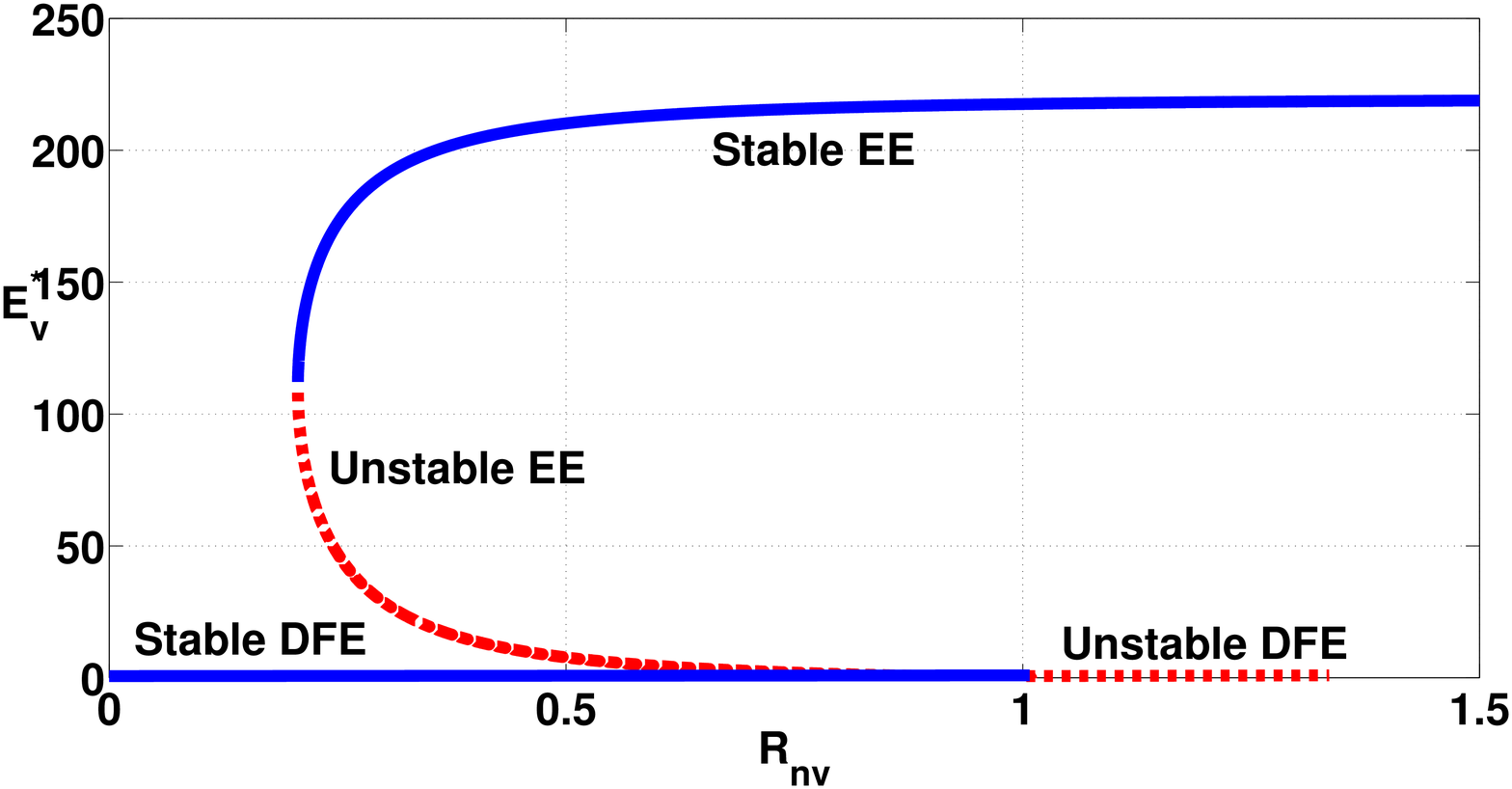}
\caption{The backward bifurcation curves for model system \eqref{AR3withoutVac} in the $(R_{nv}, E^{*}_{h})$, and $(R_{nv}, E^{*}_{v})$ planes. The parameter $\beta_{hv}$ is varied in the range [0, 0.9090] to allow $R_0$ to vary in the range [0, 1.5]. Two endemic equilibrium points coexist for values of $R_0$ in the range (0.2286, 1) (corresponding to the range (0.0211, 0.4040) of $\beta_{hv}$). The notation EE and DFE stand for endemic equilibrium and disease free equilibrium, respectively.  Solid line represent stable equilibria and dash line stands for  unstable equilibria.\label{backwardar3}}
\end{flushleft}
\end{figure}

The associated bifurcation diagram is depicted in figure \ref{backwardar3}. This clearly shows the co-existence of two locally-asymptotically stable equilibria when $R_{nv}<1$, confirming that the model without vaccination \eqref{AR3withoutVac} undergoes the phenomenon of backward bifurcation too. 

Thus, the following result is established.
\begin{lemma}
\label{nonbackward}
The model without vaccination \eqref{AR3withoutVac} undergoes backward bifurcation when Case (iii) of Theorem \ref{EEsansVAc} holds.
\end{lemma}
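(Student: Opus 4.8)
The plan is to deduce the lemma from material already in hand: the quadratic \eqref{eqEEsansVac} governing the endemic equilibria of \eqref{AR3withoutVac}, the local stability statement of Theorem~\ref{lodfesansVac}, and the elementary fact (used repeatedly above) that $R_{nv}$ is strictly increasing in the parameter $\beta_{hv}$, so that $R_{nv}<1$ is equivalent to $d_0<0$ (since $d_0$ is a positive multiple of $R_{nv}^2-1$), while $d_2<0$ always holds. We view $\beta_{hv}$ as the bifurcation parameter and track the positive roots of \eqref{eqEEsansVac} as $\beta_{hv}$ crosses the critical value at which $R_{nv}=1$.

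\textbf{Step 1 (the endemic branch leaves the DFE at $R_{nv}=1$).} Let $\widehat\beta_{hv}$ be the unique value of $\beta_{hv}$ for which $R_{nv}=1$; then $d_0=0$ there and \eqref{eqEEsansVac} becomes $\lambda^{c,*}_h\bigl(d_2\lambda^{c,*}_h+d_1\bigr)=0$. Under Case~(iii) we have $d_1>0$, hence the nontrivial root $\lambda^{c,*}_h=-d_1/d_2>0$: a curve of positive (endemic) equilibria passes through the point $(R_{nv},\lambda^{c,*}_h)=(1,0)$. \textbf{Step 2 (the branch bends into $R_{nv}<1$).} For $\beta_{hv}$ slightly below $\widehat\beta_{hv}$ one has $d_0<0$ small; the discriminant $d_1^2-4d_2d_0$ equals $d_1^2>0$ at $\widehat\beta_{hv}$ and, the coefficients $d_0,d_1,d_2$ being continuous in $\beta_{hv}$, stays positive on a left neighbourhood of $\widehat\beta_{hv}$. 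There both the sum of the roots $-d_1/d_2$ and their product $d_0/d_2$ are positive, so \eqref{eqEEsansVac} has two positive roots; the smaller one tends to $0$ and the larger to $-d_1/d_2>0$ as $R_{nv}\to1^-$. Hence the branch of Step~1 is the smaller root, it persists for $R_{nv}<1$, and it is accompanied by a second endemic branch --- precisely the signature that the bifurcation at $R_{nv}=1$ is backward, not forward.

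\textbf{Step 3 (coexistence of stable states).} By Theorem~\ref{lodfesansVac}, the disease--free equilibrium $\mathcal{E}^{nv}_1$ is locally asymptotically stable for $R_{nv}<1$. A linearisation at the two endemic points --- or the standard exchange-of-stability picture along an $S$-shaped equilibrium curve, whereby the branch below the saddle-node turning point is unstable and the one above is stable --- shows that the smaller root corresponds to a saddle and the larger root to a locally asymptotically stable endemic equilibrium; the explicit pair $\mathcal{E}^{nv}_1,\mathcal{E}^{nv}_2$ displayed above and the curves in Figure~\ref{backwardar3} confirm this. Thus for $R_{nv}<1$ the stable DFE coexists with a stable endemic equilibrium, which is exactly the definition of backward bifurcation, and the lemma follows.

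\textbf{Alternative and main obstacle.} One could equally apply Theorem~4.1 of Castillo--Chavez and Song~\cite{ccso} to \eqref{AR3withoutVac} as in the proof of Theorem~\ref{thbifARopt} and check that the corresponding coefficient $\mathcal{A}_1$ is positive precisely under the Case~(iii) conditions; the quadratic route above is shorter because the root structure is already known. The only genuinely delicate point is Step~3: establishing stability of the upper endemic branch rigorously (rather than by the generic $S$-curve argument) requires either a Routh--Hurwitz analysis of the Jacobian of \eqref{AR3withoutVac} at $\mathcal{E}^{nv}$ inside $\mathcal{D}_1$, or a normal-form reduction at the turning point; everything else is continuity of $d_0,d_1,d_2$ in $\beta_{hv}$ and the sign bookkeeping of a quadratic.
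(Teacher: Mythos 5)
Your argument follows essentially the same route as the paper: the paper establishes the lemma by combining Case~(iii) of Theorem~\ref{EEsansVAc} (two positive roots of the quadratic \eqref{eqEEsansVac}, hence two endemic equilibria coexisting with the DFE for $R_{nv}<1$) with the local asymptotic stability of the DFE from Theorem~\ref{lodfesansVac}, and then confirms the stability of the larger endemic equilibrium only through the numerical example and the bifurcation diagram of Figure~\ref{backwardar3}. Your Steps~1--2 merely make the branch structure near $R_{nv}=1$ explicit via continuity of the coefficients $d_0,d_1,d_2$, and the stability gap you flag in Step~3 is equally present in the paper's treatment, which likewise offers no Routh--Hurwitz or normal-form verification of the upper endemic branch.
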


\paragraph{Non-existence of endemic equilibria for $R_{nv}<1$ and $\delta=0$}
\begin{lemma}
The model \eqref{AR3withoutVac} without disease--induced death ($\delta=0$) has no endemic equilibrium  when $R_{nv,\delta=0}\leq 1$, and has a unique endemic equilibrium otherwise.
\end{lemma}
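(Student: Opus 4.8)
The plan is to read off the sign pattern of the quadratic \eqref{eqEEsansVac} after specialising the coefficients \eqref{coeffEEsnsVac} to $\delta=0$; what results is essentially cases (i) and (iv) of Theorem~\ref{EEsansVAc} sharpened into an if-and-only-if statement. Setting $\delta=0$ gives $k_3k_4-\delta\gamma_h=k_3k_4>0$, so that
\[
d_2=-k_9\mu_b\Lambda_h(sK_E+k_6K_L)k_3k_4\big((\eta_hk_4+\gamma_h)a\mu_h(1-\alpha_1)\beta_{vh}+k_3k_4k_8\big)<0
\]
is still strictly negative, and, crucially,
\[
R_c\big|_{\delta=0}=2+\frac{(\eta_hk_4+\gamma_h)a\mu_h(1-\alpha_1)\beta_{vh}}{k_3k_4k_8}\;>\;2\;>\;1 .
\]

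First I would treat the case $R_{nv,\delta=0}\le 1$. Then $R_{nv,\delta=0}^{2}\le 1<R_c\big|_{\delta=0}$, so from \eqref{coeffEEsnsVac}
\[
d_1=k_3^2k_4^2k_8k_9(sK_E+k_6K_L)\mu_b\Lambda_h\mu_h\big(R_{nv,\delta=0}^{2}-R_c\big)<0,\qquad
d_0=k_3^2k_4^2k_8k_9(sK_E+k_6K_L)\mu_b\Lambda_h\mu_h^{2}\big(R_{nv,\delta=0}^{2}-1\big)\le 0 .
\]
Hence, for every $\lambda_h^{c,*}>0$, the left-hand side of \eqref{eqEEsansVac} is the sum of a strictly negative term $d_2(\lambda_h^{c,*})^{2}$ and two nonpositive terms, so it is strictly negative; the only nonnegative root of \eqref{eqEEsansVac} is $\lambda_h^{c,*}=0$, i.e. the disease-free equilibrium $\mathcal{E}^{nv}_{1}$. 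Thus \eqref{AR3withoutVac} with $\delta=0$ has no endemic equilibrium. (The boundary case $R_{nv,\delta=0}=1$ is covered, since then $d_0=0$ and the remaining root $-d_1/d_2<0$ is still negative.)

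For the converse, when $R_{nv,\delta=0}>1$ we have $d_0>0$ while $d_2<0$, so the product of the two roots of \eqref{eqEEsansVac} equals $d_0/d_2<0$ and exactly one root $\lambda_h^{c,*}$ is positive. Feeding this value back through the compatibility relation between $\lambda_h^{c,*}$ and $\lambda_v^{c,*}$ used to obtain \eqref{eqEEsansVac}, and then through \eqref{EEhsansvac}--\eqref{EEvsansVac}, reconstructs one and only one equilibrium; all of its components are positive because every expression in \eqref{EEhsansvac}--\eqref{EEvsansVac} is positive whenever $\lambda_h^{c,*},\lambda_v^{c,*}>0$, and it lies in $\mathcal{D}_1$ by the positive invariance established in Section~\ref{sec2ar3opc}. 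This is precisely case (i) of Theorem~\ref{EEsansVAc}, so \eqref{AR3withoutVac} with $\delta=0$ has a unique endemic equilibrium.

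There is no genuine difficulty here beyond bookkeeping; the one point that must be handled with care is the inequality $R_c\big|_{\delta=0}>1$, since it is exactly what forbids the sign configuration $d_1>0$ needed for the two-equilibria case (iii) of Theorem~\ref{EEsansVAc} to occur. In passing, this argument makes precise the Remark following Theorem~\ref{GASDFE}: in the vaccination-free model, switching off the disease-induced death ($\delta=0$) destroys the backward bifurcation and restores the standard forward transcritical picture at $R_{nv,\delta=0}=1$.
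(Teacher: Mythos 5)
Your proof is correct, but it takes a genuinely different route from the paper's. The paper does not specialise the quadratic \eqref{eqEEsansVac}: it re-runs the equilibrium computation with $\delta=0$ from the outset, where $N_h^{*}=\Lambda_h/\mu_h$ is constant, so that the compatibility condition between $\lambda_h^{c,*}$ and $\lambda_v^{c,*}$ collapses to a \emph{linear} equation $p_1\lambda_h^{c,*}+p_0=0$ with $p_1>0$ and $p_0=-\mu_hk_3k_4k_8k_9K_{12}\mu_b\Lambda_h\bigl(R^2_{nv,\delta=0}-1\bigr)$; the sign of $p_0$ then settles both halves of the lemma at once. You instead keep the degree-two equation valid for all $\delta\ge 0$ and read off the signs of $d_2,d_1,d_0$ at $\delta=0$, the decisive observation being $R_c\vert_{\delta=0}=2+\tfrac{(\eta_hk_4+\gamma_h)a\mu_h(1-\alpha_1)\beta_{vh}}{k_3k_4k_8}>1$, which forces $d_1<0$ whenever $R_{nv,\delta=0}\le 1$ and thereby excludes the sign pattern of case (iii) of Theorem~\ref{EEsansVAc}. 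The two are consistent: at $\delta=0$ the quadratic factors as $d_2(\lambda_h^{c,*}+\mu_h)(\lambda_h^{c,*}-\lambda^{\dagger})$, the root $-\mu_h$ being an artefact of clearing the denominator of $S_h^{*}$, and $\lambda^{\dagger}$ being exactly the root of the paper's linear equation. What your version buys is an explicit explanation, in terms of the coefficients already on the page, of \emph{why} the backward-bifurcation configuration $d_0<0$, $d_1>0$ is unattainable at $\delta=0$; what the paper's version buys is brevity and the structural insight that the fixed-point equation genuinely drops a degree when the disease-induced death is removed. Two very minor points: your uniqueness half should invoke the standing assumption $\mathcal{N}>1$ to guarantee $P>0$ in \eqref{EEvsansVac}, and the closing appeal to positive invariance of $\mathcal{D}_1$ is not needed (and does not by itself place an equilibrium in that set); neither affects the validity of the argument.
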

\begin{proof}
Considering the model \eqref{AR3withoutVac} without disease--induced death in human, and applying the same procedure, we obtain that the nonzero equilibria of the model without vaccination satisfy the linear
equation
\[
p_1\lambda^{c,*}_h+p_0=0,
\]
where  $p_1=k_{9}k_{10}K_{12}a\mu_{b}\Lambda_{h}\mu_{h}(1-\alpha_1)\beta_{vh}+k_{3}(\mu_h+\sigma)k_{8}k_{9}K_{12}\mu_{b}\Lambda_{h}$ and \\
$p_0=-\mu_{h}k_{3}k_{4}k_{8}k_{9}K_{12}\mu_{b}\Lambda_{h}\left(R^{2}_{nv,\delta=0}-1\right)$.

Clearly, $p_1>0$ and $p_0\geq 0$ whenever $R_{nv,\delta=0}\leq 1$, so that $\lambda^{c,*}_h=-\dfrac{p_0}{p_1}\leq 0$. Therefore, the model \eqref{AR3withoutVac} without disease--induced death in human, has no endemic equilibrium whenever $R^{2}_{nv,\delta=0}\leq 1$.
\end{proof}
The above result suggests the impossibility of backward bifurcation in the model \eqref{AR3withoutVac} without disease--induced death, since no endemic equilibrium exists when $R_{nv,\delta=0}<1$ (and backward bifurcation requires the presence of at least two endemic equilibria when $R_{nv,\delta=0}<1$)
~\cite{gaguab,SharomietAl2007}. To completely rule out backward bifurcation in model \eqref{AR3withoutVac}, we use the direct Lyapunov method to proove the global stability of the DFE.
\paragraph{Global stability of the DFE of \eqref{AR3withoutVac} for $\delta=0$ }$\,$

Define the positively-invariant and attracting region
%{\footnotesize
\[
\begin{split}
\mathcal{D}_{2}&=\left\lbrace
(S_h,E_h,I_h,R_h,S_v,E_v,I_v,E,L,P)\in\mathcal{D}_{1}:
S_h\leq N^{0}_h; S_v\leq N^{0}_v\right\rbrace\\
\end{split}
\]
%}
We claim the following result.
\begin{theorem}
The DFE, $\mathcal{E}^{nv}_{1}$, of model \eqref{AR3withoutVac} without disease--induced death ($\delta=0$), is globally asymptotically stable (GAS) in $\mathcal{D}_{2}$ if $R_{nv,\delta=0}<1$.
\end{theorem}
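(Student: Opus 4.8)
The plan is to construct a linear (or weighted-sum) Lyapunov function on the infected compartments, following the standard approach for vector-borne models with mass-action-type incidence. Set $\delta=0$ so that, on $\mathcal{D}_2$, we have $N_h\to N_h^0=\Lambda_h/\mu_h$ and $S_h\le N_h^0$, $S_v\le N_v^0$. First I would observe that the infected subsystem $(E_h,I_h,E_v,I_v)$ is governed (on $\mathcal{D}_2$) by a cooperative-type linear differential inequality
\[
\dot{X}_I\le (F_1-V_1)X_I,
\]
where $F_1,V_1$ are exactly the next-generation matrices exhibited before Theorem~\ref{lodfesansVac} (with $N_v^0/N_h^0$ already baked in, since $S_v\le N_v^0$ and $S_h\le N_h^0$ give the coordinate-wise bound). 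Here $\lambda_h^c S_h\le a(1-\alpha_1)\beta_{hv}(\eta_v E_v+I_v)$ because $S_h/N_h\le 1$, and $\lambda_v^c S_v \le a(1-\alpha_1)\beta_{vh}(\eta_h E_h+I_h)N_v^0/N_h^0$ because $S_v\le N_v^0$ and $1/N_h\le 1/\bar N_h$; when $\delta=0$ we get the cleaner $1/N_h^0$.

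Next I would take $\mathbf{u}^T=(u_1,u_2,u_3,u_4)$ to be the nonnegative left Perron eigenvector of $V_1^{-1}F_1$ (equivalently a suitable left eigenvector associated with $\rho(F_1V_1^{-1})=R_{nv,\delta=0}$), and define
\[
\mathcal{L}=\mathbf{u}^T V_1^{-1} X_I = u_1\frac{E_h}{\cdots}+\cdots
\]
i.e. the linear functional whose coefficients are obtained from $\mathbf{u}^TV_1^{-1}$. Differentiating along trajectories and using the inequality above,
\[
\dot{\mathcal{L}}\le \mathbf{u}^T V_1^{-1}(F_1-V_1)X_I=(R_{nv,\delta=0}-1)\,\mathbf{u}^T X_I\le 0
\]
whenever $R_{nv,\delta=0}<1$, with equality only on the set where $X_I=0$ (using $\mathbf{u}>0$, which follows from irreducibility of $F_1+V_1$ — the same irreducibility invoked for Theorem~\ref{JCK2008}). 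By LaSalle's invariance principle, every solution in $\mathcal{D}_2$ converges to the largest invariant set contained in $\{X_I=0\}$; substituting $E_h=I_h=E_v=I_v=0$ back into \eqref{AR3withoutVac} forces $S_h\to N_h^0$, $R_h\to 0$, and the aquatic/vector equations relax to the unique $\mathcal{N}>1$ equilibrium values $N_v^0,E,L,P$ from \eqref{TEandDFE}. Hence that invariant set is the singleton $\{\mathcal{E}^{nv}_1\}$, proving GAS on $\mathcal{D}_2$.

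The main obstacle — and the part deserving care — is twofold. First, one must justify that $\mathcal{D}_2$ is positively invariant and attracting (so that the bounds $S_h\le N_h^0$, $S_v\le N_v^0$ legitimately hold along trajectories), and that the aquatic subsystem $(E,L,P)$ genuinely converges to its equilibrium so the limiting vector dynamics are as claimed; this can be done by noting the aquatic/vector block is (asymptotically autonomous and) monotone with a globally attracting fixed point when $\mathcal{N}>1$, or by appealing to the already-established convergence in the proof of Theorem~\ref{GASDFE}. Second, pinning down the LaSalle invariant set requires checking that no trajectory can stay in $\{X_I=0\}$ with the susceptible/recruitment variables off-equilibrium — which is immediate since on $\{X_I=0\}$ the $S_h$ and $R_h$ equations are linear with globally stable equilibria. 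With the comparison lemma and LaSalle in hand the remaining computations (the explicit entries of $\mathbf{u}^TV_1^{-1}$ and verifying the equality case) are routine.
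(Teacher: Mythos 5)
Your proposal follows essentially the same route as the paper's own proof: a linear Lyapunov function in the infected compartments $(E_h,I_h,E_v,I_v)$, the bounds $S_h\le N^{0}_h$ and $S_v\le N^{0}_v$ available on $\mathcal{D}_2$, a derivative estimate proportional to $(R_{nv,\delta=0}-1)$ times a nonnegative linear form in $X_I$, and LaSalle's invariance principle to pin down the limit set as $\{\mathcal{E}^{nv}_1\}$. The only difference is cosmetic: you package the coefficients abstractly as $\mathbf{u}^{T}V_1^{-1}$ for a left Perron eigenvector, whereas the paper writes the equivalent weights $q_1,\dots,q_4$ explicitly and arrives at $\dot{\mathcal{G}}\le (R^{2}_{nv,\delta=0}-1)E_h$.
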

\begin{proof}
Consider the Lyapunov function
\[
\mathcal{G}=q_1E_h+q_2I_h+q_3E_v+q_4I_v.
\]
where
\[
\begin{array}{l}
q_1=\dfrac{1}{k_3},\,\,\,
q_2=\dfrac{a^{2}(1-\alpha_1)^{2}\beta_{hv}\beta_{vh}(\eta_vk_8+\gamma_v)N^{0}_v}{k_3k_4k_8k_9N^{0}_h},\,\,
q_3=\dfrac{a(1-\alpha_1)\beta_{hv}(\eta_vk_8+\gamma_v)N^{0}_v}{k_3k_8k_9N^{0}_h},\\
q_4=\dfrac{a(1-\alpha_1)\beta_{hv}}{k_3k_8}.
\end{array}
\]
The derivative of $\mathcal{G}$ is given by
\small
\[
\begin{split}
\dot{\mathcal{G}}&=q_1\dot{E_h}+q_2\dot{I_h}+q_3\dot{E_v}+q_4\dot{I_v}\\
&=q_1(\lambda^{c}_hS_h-k_3E_h)+q_2(\gamma_hE_h-k_4I_h)+q_3(\lambda^{c}_vS_v-k_9E_v)+q_4(\gamma_vE_v-k_8I_v)\\
&=q_1((1-\alpha_1)\lambda_hS_h-k_3E_h)+q_2(\gamma_hE_h-k_4I_h)+q_3((1-\alpha_1)\lambda_vS_v-k_9E_v)+q_4(\gamma_vE_v-k_8I_v)\\
&=q_1\dfrac{\mu_h}{\Lambda_h}\left[ a(1-\alpha_1)\beta_{hv}(\eta_vE_v+I_v)S_h-\dfrac{\Lambda_h}{\mu_h}k_3E_h\right] +q_2(\gamma_hE_h-k_4I_h)\\
&+q_3\dfrac{\mu_h}{\Lambda_h}\left[a(1-\alpha_1)\beta_{vh}(\eta_hE_h+I_h)S_v-k_9\dfrac{\Lambda_h}{\mu_h}E_v\right] +q_4(\gamma_vE_v-k_8I_v)\\
&=\dfrac{1}{k_3}\dfrac{\mu_h}{\Lambda_h}a(1-\alpha_1)\beta_{hv}\eta_vS_hE_v+\dfrac{1}{k_3}\dfrac{\mu_h}{\Lambda_h}a(1-\alpha_1)\beta_{hv}S_hI_v\\
&-\dfrac{1}{k_3}k_3E_h+\dfrac{a^{2}(1-\alpha_1)^{2}\beta_{hv}\beta_{vh}(\eta_vk_8+\gamma_v)N^{0}_v}{k_3k_4k_8k_9N^{0}_h}\gamma_hE_h-\dfrac{a^{2}(1-\alpha_1)^{2}\beta_{hv}\beta_{vh}(\eta_vk_8+\gamma_v)N^{0}_v}{k_3k_4k_8k_9N^{0}_h}k_4I_h\\
&+\dfrac{a(1-\alpha_1)\beta_{hv}(\eta_vk_8+\gamma_v)N^{0}_v}{k_3k_8k_9N^{0}_h}\dfrac{\mu_h}{\Lambda_h}a(1-\alpha_1)\beta_{vh}\eta_hS_vE_h\\
&+\dfrac{a(1-\alpha_1)\beta_{hv}(\eta_vk_8+\gamma_v)N^{0}_v}{k_3k_8k_9N^{0}_h}\dfrac{\mu_h}{\Lambda_h}a(1-\alpha_1)\beta_{vh}S_vI_h-\dfrac{a(1-\alpha_1)\beta_{hv}(\eta_vk_8+\gamma_v)N^{0}_v}{k_3k_8k_9N^{0}_h}k_9E_v\\
&+\dfrac{a(1-\alpha_1)\beta_{hv}}{k_3k_8}\gamma_vE_v-\dfrac{a(1-\alpha_1)\beta_{hv}}{k_3k_8}k_8I_v\\
&\leq\left[\dfrac{1}{k_3}a(1-\alpha_1)\beta_{hv}\eta_v+q_4\gamma_v-\dfrac{a(1-\alpha_1)\beta_{hv}(\eta_vk_8+\gamma_v)N^{0}_v}{k_3k_8k_9N^{0}_h}k_9\right] E_v
+\left[\dfrac{1}{k_3}a(1-\alpha_1)\beta_{hv}-q_4k_8\right] I_v\\
&+\left[\dfrac{a(1-\alpha_1)\beta_{hv}(\eta_vk_8+\gamma_v)N^{0}_v}{k_3k_8k_9N^{0}_h}\dfrac{\mu_h}{\Lambda_h}a(1-\alpha_1)\beta_{vh}\eta_hN^{0}_v\right.\\
&\left.+\dfrac{a^{2}(1-\alpha_1)^{2}\beta_{hv}\beta_{vh}(\eta_vk_8+\gamma_v)N^{0}_v}{k_3k_4k_8k_9N^{0}_h}\gamma_h-\dfrac{1}{k_3}k_3\right] E_h\\
&+\left[\dfrac{a(1-\alpha_1)\beta_{hv}(\eta_vk_8+\gamma_v)N^{0}_v}{k_3k_8k_9N^{0}_h}\dfrac{\mu_h}{\Lambda_h}a(1-\alpha_1)\beta_{vh}N^{0}_v-\dfrac{a^{2}(1-\alpha_1)^{2}\beta_{hv}\beta_{vh}(\eta_vk_8+\gamma_v)N^{0}_v}{k_3k_4k_8k_9N^{0}_h}k_4\right] I_h\\
%&=\left[\dfrac{1}{k_3}\dfrac{a^{2}(1-\alpha_1)^{2}\beta_{hv}\beta_{vh}(\eta_vk_8+\gamma_v)N^{0}_v}{k_8k_9N^{0}_h}\eta_h+\dfrac{1}{k_3}\dfrac{a^{2}(1-\alpha_1)^{2}\beta_{hv}\beta_{vh}(\eta_vk_8+\gamma_v)N^{0}_v}{k_4k_8k_9N^{0}_h}\gamma_h-\dfrac{1}{k_3}k_3\right] E_h\\
%&=\left[\dfrac{a^{2}(1-\alpha_1)^{2}\beta_{hv}\beta_{vh}(\eta_vk_8+\gamma_v)N^{0}_v}{k_3k_8k_9N^{0}_h}\eta_h+\dfrac{a^{2}(1-\alpha_1)^{2}\beta_{hv}\beta_{vh}(\eta_vk_8+\gamma_v)N^{0}_v}{k_3k_4k_8k_9N^{0}_h}\gamma_h-1\right] E_h\\
&=(R^{2}_{nv,\delta=0}-1) E_h\\
\end{split}
\]
\normalsize
We have $\dot{\mathcal{G}}\leq 0$ if $R_{nv,\delta=0}\leq 1$, with $\dot{\mathcal{Y}}=0$ if $\mathcal{R}_1=1$ or $E_h=0$. Whenever $E_h=0$, we also have $I_h=0$, $E_v=0$ and $I_v=0$. Substituting  $E_h=I_h=E_v=I_v=0$ in the first, fourth and fifth equation of Eq. \eqref{AR3withoutVac} with $\delta=0$ gives $S_h(t)\rightarrow S^{0}_h=N^{0}_h$, $R_h(t)\rightarrow 0$, and $S_v(t)\rightarrow S^{0}_v=N^{0}_v$ as
$t\rightarrow \infty$. Thus 
\small
\[
\begin{split}
&\left[S_h(t),E_h(t),I_h(t),R_h(t),S_v(t),E_v(t),I_v(t),E(t),L(t),P(t)\right]
\rightarrow (N^{0}_h,0,0,0,N^{0}_v,0,0,E,L,P)\\
&\text{as} \,\,\, t\rightarrow \infty.
\end{split}
\]\normalsize
It follows from the LaSalle's invariance principle \cite{invariance,14,13}
that every solution of \eqref{AR3withoutVac} (when $\mathcal{R}^{2}_{nv,\delta=0}\leq 1$), with
initial conditions in $\mathcal{D}_2$ converges to $\mathcal{E}^{nv}_{1}$, as
$t\rightarrow \infty$. Hence, the DFE, $\mathcal{E}^{nv}_{1}$, of model \eqref{AR3withoutVac} without disease--induced death, is GAS in $\mathcal{D}_2$  if $\mathcal{R}^{2}_{nv,\delta=0}\leq 1$.
\end{proof}
\subsubsection{Analysis of the model with mass action incidence} 
Consider the model \eqref{AR3} with mass action incidence. Thus, the associated forces of infection, $\lambda_h$ and $\lambda_v$, respectively, reduce to
\begin{equation}
\label{fmass}
\lambda_{mh}=C_{h}(\eta_vE_v+I_v)\quad \text{and}\quad \lambda_{mv}=C_{v}(\eta_hE_h+I_h),
\end{equation}
where, $C_{h}=a(1-\alpha_1)\beta_{hv}$ and  $C_{v}=a(1-\alpha_1)\beta_{vh}$. The resulting model (mass action model), obtained by using \eqref{fmass} in \eqref{AR3}, has the same disease--free equilibria given by \eqref{TEandDFE}. Without lost of generality, we consider that $\mathcal{N}>1$.
The associated next generation matrices, $F_m$ and $V_m$ are given by
$$
F_m=\left( \begin{array}{cccc}
0&0&C_{h}\eta_vH^{0}&C_{h}H^{0}\\
0&0&0&0\\
C_{v}\eta_vS^{0}_v&C_{v}S^{0}_v&0&0\\
0&0&0&0
\end{array}\right),\;V_m=\left(\begin{array}{cccc}
k_3&0&0&0\\
-\gamma_h&k_4&0&0\\
0&0&k_9&0\\
0&0&-\gamma_v&k_8
\end{array}\right),
$$
where $H^{0}=S^{0}_h+\pi V^{0}_h$. It follows that the associated reproduction number for the mass
action model, denoted by $R_{0,m}=\rho(F_{m}V_{m}^{-1})$, is given by
\begin{equation}
\label{Romass}
\begin{split}
R_{0,m}%&=\sqrt{\dfrac{a^2\beta_{hv}\beta_{vh}\,\Lambda_{h}\left(\gamma_{h}+k_{4}\eta_{h}\right) \left(\gamma_{v}+k_{8}\eta_{v}\right)\left(\pi\xi+k_{2}\right)\theta P}
%{k_{3}k_{4}k_{8}^2k_{9}\mu_{h}\left(\xi+k_{2}\right)}}\\
%&=\sqrt{\left( \dfrac{a\beta_{hv}\Lambda_{h}\left(\gamma_{h}+k_{4}\eta_{h}\right)\left(\pi\xi+k_{2}\right)}{k_{3}k_{4}\mu_{h}\left(\xi+k_{2}\right)}\right) 
%\left( \dfrac{a\beta_{vh} \left(\gamma_{v}+k_{8}\eta_{v}\right)\theta P}{k_{8}^2k_{9}}\right)} \\
&=\sqrt{\mathcal{R}^{m}_{hv}\mathcal{R}^{m}_{vh}},
\end{split}
\end{equation}
where\\
{\small $\mathcal{R}^{m}_{hv}=\left( \dfrac{a(1-\alpha_1)\beta_{hv}\Lambda_{h}\left(\gamma_{h}+k_{4}\eta_{h}\right)\left(\pi\xi+k_{2}\right)}{\mu_{h}k_{3}k_{4}\left(\xi+k_{2}\right)}\right)$ and 
$\mathcal{R}^{m}_{vh}=\left( \dfrac{a(1-\alpha_1)\beta_{vh} \left(\gamma_{v}+k_{8}\eta_{v}\right)\theta P}{k_{8}^2k_{9}}\right)$.}

Using Theorem 2 of \cite{vawa02}, the following result is established:
\begin{theorem}
\label{lmass}
Assume that $\mathcal{N}>1$. For the arboviral disease model with mass action incidence, given by \eqref{AR3} with \eqref{fmass}, the DFE, $\mathcal{E}_1$, is LAS if $R_{0,m}<1$, and unstable if $R_{0,m}>1$
\end{theorem}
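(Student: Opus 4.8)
The plan is to apply the next-generation matrix method of van den Driessche and Watmough \cite{vawa02} (their Theorem 2), exactly as was done for the full model in Theorem \ref{th3Ar3}. First I would order the state variables so that the four infected compartments $(E_h,I_h,E_v,I_v)$ come first, and decompose the right-hand sides of the infected equations as $\mathcal{F}_i-\mathcal{V}_i$, where $\mathcal{F}=(\lambda_{mh}(S_h+\pi V_h),\,0,\,\lambda_{mv}S_v,\,0)^{T}$ collects the new-infection terms and $\mathcal{V}$ collects all remaining transfer terms (progression $E_h\to I_h$ and $E_v\to I_v$, together with removal at the rates $k_3,k_4,k_9,k_8$). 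One then checks the structural assumptions (A1)--(A5) of \cite{vawa02}: nonnegativity of $\mathcal{F}$ and of the inflow part of $-\mathcal{V}$, the fact that $\mathcal{F}_i=0$ whenever the infected compartments vanish, invariance of the disease-free set, and --- crucially --- that in the absence of new infection the disease-free subsystem (the $S_h,V_h,R_h,S_v,E,L,P$ equations) has a locally asymptotically stable equilibrium at $\mathcal{E}_1$. This last point is where the hypothesis $\mathcal{N}>1$ enters: it guarantees that $\mathcal{E}_1$ is the biologically relevant equilibrium with a positive vector population, and the stability of the reduced linear system is already contained in the proof of Theorem \ref{th3Ar3} (the corresponding block $\mathcal{A}_1(X)$ there has all eigenvalues with negative real parts).

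Next I would compute the two Jacobian blocks at $\mathcal{E}_1$, namely $F_m=D\mathcal{F}$ and $V_m=D\mathcal{V}$ restricted to the infected compartments. Since the mass-action forces of infection are $\lambda_{mh}=C_h(\eta_vE_v+I_v)$ and $\lambda_{mv}=C_v(\eta_hE_h+I_h)$ with $C_h=a(1-\alpha_1)\beta_{hv}$ and $C_v=a(1-\alpha_1)\beta_{vh}$, and since at $\mathcal{E}_1$ one has $S_h+\pi V_h=H^{0}$ and $S_v=S^{0}_v$, differentiation yields precisely the matrices $F_m$ and $V_m$ displayed above. Here $V_m$ is block lower-triangular with positive diagonal entries $k_3,k_4,k_9,k_8$, hence a nonsingular M-matrix, and $F_m\ge 0$, so $R_{0,m}:=\rho(F_mV_m^{-1})$ is well defined.

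It then remains to evaluate the spectral radius. Because $F_m$ has nonzero entries only in the host-from-vector and vector-from-host blocks, $F_mV_m^{-1}$ has the block anti-diagonal structure typical of host--vector models, so its nonzero eigenvalues come in $\pm\sqrt{\cdot}$ pairs and $\rho(F_mV_m^{-1})^{2}$ equals the product of the two scalar off-diagonal contributions. Carrying out the $2\times2$ inversions of the diagonal blocks of $V_m$ (which produce the factors $(\gamma_h+k_4\eta_h)/(k_3k_4)$ and $(\gamma_v+k_8\eta_v)/(k_8k_9)$) and substituting $H^{0}=\Lambda_h(\pi\xi+k_2)/(\mu_h(\xi+k_2))$ and $S^{0}_v=\theta P/k_8$ from \eqref{TEandDFE}, one obtains $R_{0,m}=\sqrt{\mathcal{R}^{m}_{hv}\mathcal{R}^{m}_{vh}}$ with the two quantities as stated. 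With $R_{0,m}$ identified, Theorem 2 of \cite{vawa02} gives directly that $\mathcal{E}_1$ is locally asymptotically stable when $R_{0,m}<1$ and unstable when $R_{0,m}>1$.

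The main obstacle is bookkeeping rather than conceptual: one must (i) split $\mathcal{F}$ and $\mathcal{V}$ so that every assumption in \cite{vawa02} holds --- in particular taking care that the vector recruitment term $\theta P$ and the logistic aquatic dynamics are assigned to the disease-free subsystem and not to the infected block --- and (ii) verify that the stability of the full Jacobian at $\mathcal{E}_1$ decouples into the stability of the uninfected subsystem (already handled via $\mathcal{A}_1$ in the proof of Theorem \ref{th3Ar3}) together with the sign of $\rho(F_mV_m^{-1})-1$. Once these are in place, the conclusion is immediate.
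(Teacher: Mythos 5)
Your proposal is correct and follows essentially the same route as the paper: the paper also computes the next-generation matrices $F_m$ and $V_m$ at $\mathcal{E}_1$, identifies $R_{0,m}=\rho(F_mV_m^{-1})=\sqrt{\mathcal{R}^{m}_{hv}\mathcal{R}^{m}_{vh}}$, and concludes by invoking Theorem~2 of \cite{vawa02}. Your additional care in verifying the hypotheses (A1)--(A5) and the stability of the uninfected subsystem only makes explicit what the paper leaves implicit.
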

\paragraph{Existence of endemic equilibria}$\,$
Solving the equations in the model \eqref{AR3withoutVac} in terms of $\lambda^{*}_{mh}$ and $\lambda^{*}_{mv}$,
gives
%{%\footnotesize
\begin{equation}
\label{EEhmass}
\begin{array}{l}
S^{*}_{mh}=\dfrac{\Lambda_h(\pi\lambda^{c,*}_{mh}+k_2)}
{\lambda^{c,*}_{mh}(k_2+\pi(k_1+\lambda^{c,*}_{mh}))+k_1k_2-\omega\xi},\;\;\;
V^{*}_{mh}=\dfrac{\xi S^{*}_{mh}}{k_2+\pi\lambda^{c,*}_{mh}},\;\;\;
E^{*}_{mh}=\dfrac{\lambda^{c,*}_{mh}S^{*}_{mh}}{k_3},\;\;\\
I^{*}_{mh}=\dfrac{\gamma_h\lambda^{c,*}_{h}S^{*}_{mh}}{k_3k_4},\;\;R^{*}_{mh}=\dfrac{\sigma\gamma_h\lambda^{c,*}_{mh}S^{*}_{mh}}{\mu_hk_3k_4},
\end{array}
\end{equation}
%}
and
\begin{equation}
\label{EEvmass}
\begin{array}{l}
S^{*}_{mv}=\dfrac{\theta P}{(\lambda^{c,*}_{mv}+k_8)},\,\;\; E^{*}_{mv}=\dfrac{\theta P\lambda^{c,*}_{mv}}{k_9(\lambda^{c,*}_{mv}+k_8)},\;\;I^{*}_{mv}=\dfrac{\gamma_v\theta P\lambda^{c,*}_{mv}}{k_8k_9(\lambda^{c,*}_{mv}+k_8)}.
\end{array}
\end{equation}
Substituting \eqref{EEhmass} and \eqref{EEvmass} into the expression of $\lambda^{*}_{mh}$ and $\lambda^{*}_{mv}$ and simplifying, shows that the nonzero equilibria of the model without vaccination satisfy the quadratic equation
\begin{equation}
\label{eqEEsansVac}
e_2(\lambda^{c,*}_{mh})^{2}+e_1\lambda^{c,*}_{mh}+e_0=0,
\end{equation}
where $e_i$, $i=0,1,2$, are given by
\[
\begin{array}{l}
e_2=k_{8}k_{9}\pi
\left[\left(\gamma_{h}+k_{4}\eta_{h}\right)C_{v}\Lambda_{h}+k_{3}k_{4}k_{8}\right]\\
e_1=\dfrac{k_3k_4k^{2}_8k_{9}\kappa\pi}{(\pi\xi+k_2)}\left( R_{cm}-R^{2}_{0,m}\right),\\
e_0=k_{3}k_{4}k_{8}^2k_{9}\kappa\left(1-R^{2}_{0,m}\right),
\end{array}
\]
with $\kappa=k_1k_2-\xi\omega>0$ and 
$$
R_{cm}=\dfrac{\left[ \left(\gamma_{h}+k_{4}\eta_{h}\right)(\pi \xi+k_{2})\Lambda_{h}C_{v}
+(k_{1}\pi+k_{2})k_{3}k_{4}k_{8}\right] (\pi\xi+k_2)}{k_3k_4k_8\kappa\pi}.
$$
$e_2$ is always positive and $e_0$ is positive (resp. negative) whenever $R_{0m}$ is less (resp. greather) than unity. Thus, the mass action model admits only one endemic equilibria whenever $R_{0m}>1$. 

Now, we consider the case $R_{0m}<1$. The occurence of backward bifurcation phenomenon depend of the sign of coefficient $e_1$. The coefficent $e_1$ is always positive if and only if $R^{2}_{0,m}<R_{cm}$. It follows that the disease--free equilibrium is the unique equilibrium when $\mathcal{N}>1$ and $R_{cm}<1$. Now if $R_{cm}<R^{2}_{0,m}<1$, then in addition to the DFE  $\mathcal{E}_1$, there exists two endemic equilibria whenever $\Delta_{m}=e^{2}_1-4e_2e_0>0$. However, $R_{cm}<R^{2}_{0,m}<1\Rightarrow R_{cm}<1\Leftrightarrow 
\beta_{vh}<-\dfrac{k_3k_4k_8(\xi\omega\pi+k_{1}\pi^{2}\xi+k_{2}(\pi\xi+k_2))}{a(1-\alpha_1)\left(\gamma_{h}+k_{4}\eta_{h}\right)(\pi \xi+k_{2})(\pi\xi+k_2)\Lambda_{h}}<0$. Since all parameter of model are nonnegative, we conclude that the condition $R_{cm}<R^{2}_{0,m}<1$ does not hold.  And thus, the model with mass-action incidence does not admit endemic equilibria for $R_{cm}<1$.
\paragraph{Global stability of the DFE for the model with mass action incidence}$\,$

Since the DFE of the model with mass action incidence is the unique equilbrium whenever the corresponding basic reproduction number $R^{2}_{0,m}$ is less than unity, it remains to show that the DFE is gas. To this aim, we use the direct Lyapunov mehod. 

Let us define the following positive constants:
\[
p_1=\dfrac{1}{k_3}, p_2=\dfrac{C_hH^{0}(\eta_vk_8+\gamma_v)}{k_8k_9}\dfrac{C_vS^{0}_v}{k_3k_4},\,\,
p_3=p_1C_hH^{0}\dfrac{(\eta_vk_8+\gamma_v)}{k_8k_9},\,\,
p_4=\dfrac{C_hH^{0}}{k_3k_8}.
\]
Consider the Lyapunov function
\[
\mathcal{L}=p_1E_h+p_2I_h+p_3E_v+p_4I_v.
\]
The derivative of $\mathcal{L}$ is given by
\[
\begin{split}
\dot{\mathcal{L}}&=p_1\dot{E_h}+p_2\dot{I_h}+p_3\dot{E_v}+p_4\dot{I_v}\\
&=(p_1C_h\eta_vH+p_4\gamma_v-p_3k_9)E_v+(p_1C_hH-p_4k_8)I_v\\
&+(p_3C_v\eta_hS_v+p_2\gamma_h-p_1k_3)E_h+(p_3C_vS_v-p_2k_4)I_h\\
\end{split}
\]
Replacing $p_i,\; i=1,\hdots 4$ by their respective term, and using the fact that $H=(S_h+\pi V_h)\leq H^{0}=(S^{0}_h+\pi V^{0}_h)$ and $S_v\leq N^{0}_v$ in
\[
\begin{split}
\mathcal{D}_3&=\left\lbrace (S_h,V_h,E_h,I_h,R_h,S_v,E_v,I_v,E,L,P)\in\mathcal{D}:\right.\\
&\left. N_h\leq \frac{\Lambda_h}{\mu_h}, S_v\leq N^{0}_v=\theta P, E\leq K_E, L\leq K_L, P\leq \frac{lK_L}{k_7k_8}\right\rbrace,
\end{split}
\]
we obtain
$
%\begin{split}
\dot{\mathcal{L}}\leq \left( R^{2}_{0,m}-1\right) E_h.
%\end{split}
$

We have $\dot{\mathcal{L}}\leq 0$ if $R_{0,m}\leq 1$, with $\dot{\mathcal{L}}=0$ if $\mathcal{R}_{0,m}=1$ or $E_h=0$. Whenever $E_h=0$, we also have $I_h=0$, $E_v=0$ and $I_v=0$. Substituting  $E_h=I_h=E_v=I_v=0$ in the first, fourth and fifth equation of Eq. \eqref{AR3withoutVac} with mass action incidence gives $S_h(t)\rightarrow S^{0}_h$, $V_h(t)\rightarrow V^{0}_h$, $R_h(t)\rightarrow 0$, and $S_v(t)\rightarrow S^{0}_v=N^{0}_v$ as
$t\rightarrow \infty$. Thus 
\small
\[
\begin{split}
&\left[S_h(t),V_h(t),E_h(t),I_h(t),R_h(t),S_v(t),E_v(t),I_v(t),E(t),L(t),P(t)\right]\\
&\rightarrow (S^{0}_h,V^{0}_h,0,0,0,N^{0}_v,0,0,E,L,P)\quad \text{as} \,\,\, t\rightarrow \infty.
\end{split}
\]\normalsize
It follows from the LaSalle's invariance principle \cite{invariance,14,13}, that every solution of \eqref{AR3} with mass action incidence, with initial conditions in $\mathcal{D}_3$ converges to the DFE, as $t\rightarrow \infty$. Hence, the DFE, $\mathcal{E}_{1}$, of the model with mass action incidence, is GAS in $\mathcal{D}_3$  if $\mathcal{R}_{0,m}\leq 1$.

Thus, we claim the following result.
\begin{theorem}
The DFE, $\mathcal{E}_{1}$, of the model \eqref{AR3} with mass action incidence, is globally asymptotically stable (GAS) in $\mathcal{D}_{3}$ if $R_{0,m}<1$.
\end{theorem}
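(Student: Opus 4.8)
The plan is to establish the global asymptotic stability of $\mathcal{E}_1$ for the mass action model by the direct Lyapunov method, exactly in the spirit of the argument just carried out for model \eqref{AR3withoutVac} without disease--induced death. The key observation is that, because the incidence is now bilinear rather than of standard (ratio) type, a Lyapunov function that is \emph{linear} in the four infected compartments suffices. Concretely, I would take $\mathcal{L}=p_1E_h+p_2I_h+p_3E_v+p_4I_v$ with the positive weights $p_1,\dots,p_4$ defined as above, and differentiate $\mathcal{L}$ along trajectories of the mass action system \eqref{AR3}--\eqref{fmass}.

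First I would substitute $\dot{E}_h,\dot{I}_h,\dot{E}_v,\dot{I}_v$ from the mass action model (using $\lambda_{mh}=C_h(\eta_vE_v+I_v)$ and $\lambda_{mv}=C_v(\eta_hE_h+I_h)$) and collect $\dot{\mathcal{L}}$ as a linear combination of $E_h$, $I_h$, $E_v$ and $I_v$ with state-dependent coefficients, the only state dependence entering through $H:=S_h+\pi V_h$ in the $E_v,I_v$ coefficients and through $S_v$ in the $E_h,I_h$ coefficients. The weights are chosen so that three of these four coefficients are made non-positive after the estimates $H\le H^{0}=S^{0}_h+\pi V^{0}_h$ and $S_v\le N^{0}_v$, which hold throughout $\mathcal{D}_3$: $p_4$ is picked so that the $I_v$-coefficient vanishes, $p_3$ so that the $E_v$-coefficient vanishes at $H=H^{0}$, and $p_2$ so that the $I_h$-coefficient vanishes at $S_v=N^{0}_v$. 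What remains is a single term whose coefficient, after the algebraic identity $\mathcal{R}^{m}_{hv}\mathcal{R}^{m}_{vh}=R^{2}_{0,m}$ is invoked, collapses to $R^{2}_{0,m}-1$, giving $\dot{\mathcal{L}}\le (R^{2}_{0,m}-1)E_h\le 0$ whenever $R_{0,m}\le 1$.

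To finish, I would apply LaSalle's invariance principle on the positively invariant, attracting set $\mathcal{D}_3$. On $\{\dot{\mathcal{L}}=0\}$ one has $E_h=0$ (when $R_{0,m}<1$), and then the $I_h$, $E_v$, $I_v$ equations force $I_h=E_v=I_v=0$; substituting these into the remaining equations of \eqref{AR3} with \eqref{fmass} shows $S_h\to S^{0}_h$, $V_h\to V^{0}_h$, $R_h\to 0$, $S_v\to N^{0}_v$, and the $E,L,P$ subsystem converges to its DFE values, so the largest invariant subset of $\{\dot{\mathcal{L}}=0\}$ is the singleton $\{\mathcal{E}_1\}$. Hence every solution starting in $\mathcal{D}_3$ converges to $\mathcal{E}_1$; this is consistent with the earlier finding that $\mathcal{E}_1$ is the unique equilibrium of the mass action model when $R_{0,m}<1$, which rules out any competing $\omega$-limit set.

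I expect the main obstacle to be purely computational: verifying that the three cancellations occur exactly with positive weights $p_i$, and checking the identity that reduces the surviving $E_h$-coefficient to $R^{2}_{0,m}-1$. One must also confirm that $\mathcal{D}_3$ is indeed positively invariant, so that the estimates $H\le H^{0}$ and $S_v\le N^{0}_v$ are legitimate along every trajectory; this follows from the bounds on $N_h$, $N_v$, $E$, $L$, $P$ already obtained in the well-posedness subsection together with $H\le N_h\le N^{0}_h$.
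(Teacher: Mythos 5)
Your proposal is correct and follows essentially the same route as the paper: the same linear Lyapunov function $\mathcal{L}=p_1E_h+p_2I_h+p_3E_v+p_4I_v$ with the same weights, the same bounds $H\le H^{0}$ and $S_v\le N^{0}_v$ on $\mathcal{D}_3$ yielding $\dot{\mathcal{L}}\le (R^{2}_{0,m}-1)E_h$, and the same conclusion via LaSalle's invariance principle. Your added remark on verifying the positive invariance of $\mathcal{D}_3$ is a point the paper leaves implicit, but it does not change the argument.
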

Thus, the substitution of standard incidence with mass action incidence in the arboviral model~\eqref{AR3} removes the backward bifurcation phenomenon of the model. It should be mentioned that a similar situation was reported by Garba~\emph{et al.} in~\cite{gaguab} and by Sharomi~\emph{et al.} in~\cite{SharomietAl2007}.

We summarize the previous analysis of Subsection~\ref{causeofBackward} as follows:
\begin{lemma}
The main causes of occurence of backward bifurcation phenomenon in models \eqref{AR3} and \eqref{AR3withoutVac} are the disease--induced death and the non-linear incidence rates.
\end{lemma}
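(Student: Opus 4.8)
The plan is to prove this lemma simply by assembling the partial results of Subsection~\ref{causeofBackward} and then arguing by elimination among the three classical drivers of backward bifurcation: an imperfect vaccine, disease--induced mortality, and the standard (non-linear) incidence. First I would record that the phenomenon genuinely occurs in the full model: Theorem~\ref{thbifARopt} shows that \eqref{AR3} exhibits a backward bifurcation at $R_0=1$ whenever $\mathcal{A}_1>0$, and this sign is realized for the illustrative parameter set, so it only remains to attribute the phenomenon to its ingredients.

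To rule out the imperfect vaccine as a necessary cause, I would point to the model without vaccination \eqref{AR3withoutVac}: cases (ii)--(iii) of Theorem~\ref{EEsansVAc}, together with Lemma~\ref{nonbackward}, show that a locally stable disease--free equilibrium still coexists with a locally stable endemic equilibrium for $R_{nv}<1$. Hence deleting the vaccine does not remove the bifurcation, so the vaccine by itself is not what produces it.

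To show that disease--induced death and the non-linear incidence \emph{are} causes, I would invoke the two degenerate variants. For $\delta=0$: item~(i) of Theorem~\ref{existenceEEAR3} gives a unique endemic equilibrium of \eqref{AR3} when $R_1>1$ and none otherwise, which precludes the sub-threshold coexistence that a backward bifurcation needs (equivalently, the Remark following Theorem~\ref{GASDFE} gives $R_c=R_0$ when $\delta=0$, so the disease--free equilibrium is globally stable throughout $R_0<1$); and for \eqref{AR3withoutVac} with $\delta=0$ the non-existence lemma plus the Lyapunov argument using $\mathcal{G}=q_1E_h+q_2I_h+q_3E_v+q_4I_v$ give global stability of $\mathcal{E}^{nv}_1$ on $\mathcal{D}_2$. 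For mass--action incidence: the sign analysis of the endemic quadratic shows that $e_1$ cannot be positive while $e_0>0$ (since $R_{cm}<1$ would force $\beta_{vh}<0$), so there is no endemic equilibrium for $R_{0,m}<1$, and the Lyapunov function $\mathcal{L}=p_1E_h+p_2I_h+p_3E_v+p_4I_v$ yields global stability of $\mathcal{E}_1$ on $\mathcal{D}_3$. Thus switching off $\delta$, or replacing the standard incidence by mass action, destroys the backward bifurcation; combined with the previous paragraph, this is exactly the claim.

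The main obstacle is logical rather than computational: one must state explicitly that ``no endemic equilibrium for $R_0<1$'' (or global stability of the disease--free equilibrium there) genuinely excludes a backward bifurcation, which uses the standard fact that such a bifurcation forces at least two endemic equilibria below threshold~\cite{gaguab,SharomietAl2007}; and one should check that the sets $\mathcal{D}_2$ and $\mathcal{D}_3$ on which the global stability statements are proved actually contain the long--time dynamics, so that global stability there truly rules out a hidden endemic branch. Once these two points are made carefully, the lemma is just a summary of the subsection's theorems and lemmas.
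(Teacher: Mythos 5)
Your proposal is correct and follows essentially the same route as the paper: the lemma is explicitly presented there as a summary of Subsection~\ref{causeofBackward}, and you assemble exactly the same ingredients (backward bifurcation persists without vaccination via Theorem~\ref{EEsansVAc} and Lemma~\ref{nonbackward}; it disappears when $\delta=0$ by Theorem~\ref{existenceEEAR3}(i), the non-existence lemma and the Lyapunov argument; and it disappears under mass-action incidence by the sign analysis of the endemic quadratic and the corresponding Lyapunov function). Your added remarks on why sub-threshold non-existence of endemic equilibria rules out backward bifurcation, and on checking the invariant regions $\mathcal{D}_2$, $\mathcal{D}_3$, are sensible refinements of points the paper leaves implicit, but they do not change the argument.
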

\section{Sensitivity analysis}
\label{SensitivityAr3}
As shown in the previous sections, model (\ref{AR3}) may admit single or multiple steady states according to the value of the basic reproduction  number $R_0$. In turn, $R_0$ depends on the parameters of the model. Usually there are uncertainties in data collection and estimated values, as for our model, and therefore it is important to assess the robustness of model predictions to parameter values and, in particular, to estimate the effect on $R_0$ of varying single parameters. To this aim, we use sensitivity analysis and calculate the sensitivity indices of $R_0$ to the parameters in the model using both local and global methods.
\subsection{Local sensitivity analysis}
The local sensitivity analysis, based on the {\it normalised sensitivity index} of $R_0$ (see \cite{chhycu}), is given by
\[
S_{\Psi}=\dfrac{\Psi}{R_0}\dfrac{\partial R_0}{\partial \Psi}
\]
where $\Psi$ denotes the generic parameter of \eqref{AR3}.\\
This index indicates how sensitive $R_0$ is to changes of parameter $\Psi$. Clearly, a positive (resp. negative) index indicates that an increase in the parameter value results in an increase (resp. decrease) in the $R_0$ value \cite{chhycu}.

For instance, the computation of the sensitivity index of $R_0$ with respect to $a$ is given by
\[
S_{a}=\dfrac{a}{R_0}\dfrac{\partial R_0}{\partial a}=1>0.
\]
This shows that $R_0$ is an increasing function of $a$ and the parameter $a$ has an influence on the
spread of disease. 

We tabulate the indices of the remaining parameters in Table \ref{opctab2} using parameter values on Table \ref{vaueR0ar3}. The results, displayed in Table \ref{indexR0ar3} and Figure \ref{F:SAresultsar3opc}a. The parameters are arranged from most sensitive to least.
\begin{table}[t!]
\begin{center}
\caption{Parameter values using to compute the sensitivity indices of $R_0$.\label{vaueR0ar3}}
\begin{tabular}{ccccccc}
\hline
Parameter&value&Parameter&value&Parameter&value\\
\hline
$c_m$&0.01&$s$&0.7&$\beta_{vh}$&0.75\\
$\mu_b$&6&$\eta_2$&0.3&$\Gamma_{E}$&10000\\
$\mu_P$&0.4&$\mu_E$&0.2&$\Gamma_{L}$&5000\\
$\theta$&0.08&$\epsilon$&0.61&$\alpha_2$&0.5\\
$l$&0.5&$\Lambda_h$&2.5&$\mu_h$&$\frac{1}{67*365}$\\
$a$&1&$\beta_{hv}$&0.75&$\eta_v$&0.35\\
$\mu_v$&$\frac{1}{30}$&$\mu_L$&0.4&$\sigma$&0.1428\\
$\gamma_h$&$\frac{1}{14}$&$\eta_h$&0.35&$\gamma_v$&$\frac{1}{21}$\\
$\xi$&0.5&$\omega$&0.05&$\eta_1$&0.001\\
$\delta$&0.001&$\alpha_1$&0.2\\
\hline
\end{tabular}
\end{center}
\end{table}
\begin{table}[t!]
\begin{center}
\caption{Sensitivity indices of $R_0$ to parameters of model (\ref{AR3}), evaluated at the
baseline parameter values given in Table \ref{vaueR0ar3}.\label{indexR0ar3}}
\begin{tabular}{lcccccc}
\hline
Parameter&Index&Parameter&Index&Parameter&Index\\
\hline
$a$&+1&$\sigma$&--0.2911&$\xi$&--0.0566\\
$\mu_v$&--0.9190&$c_m$&--0.2757&$\omega$&+0.0565\\
$\epsilon$&--0.6223&$\alpha_1$&--0.25&$\mu_E$&--0.0171\\
$s$&+0.5172&$\eta_h$&+0.2067&$\delta$&--0.0020\\
$\Lambda_h$&--0.5&$\gamma_h$&--0.2064&$\eta_1$&--0.0000858\\
$\beta_{hv},\beta_{vh},\Gamma_E,\Gamma_L,\alpha_2$&+0.5&$\eta_v$&+0.1207\\
$\mu_h$&+0.4996&$\gamma_v$&+0.1174\\
$\mu_P$&--0.4810&$\mu_L$&--0.1026\\
$\theta$&+0.4810&$\mu_b$&+0.0772 \\
$l$&+0.4489&$\eta_2$&--0.0770\\
\hline
\end{tabular}
\end{center}
\end{table}
The model system \eqref{AR3} is most sensitive to $a$, the average number of mosquitoes bites,
followed by $\mu_v$, $\epsilon$, $s$, $\Lambda_h$, $\beta_{hv}$, $\beta_{vh}$, $\Gamma_E$, $\Gamma_L$  and $\alpha_2$. It is important to note that increasing (decreasing) $a$ by 10\% increases (decreases) $R_0$ by 10\%. However, increasing (decreasing) the parameters $\mu_v$ by 10\% decreases (increases) $R_0$ by 9.190\%. The same reasonning can be done for other parameters.

\subsection{Uncertainty and global sensitivity analysis}
Local sensitivity analysis assesses the effects of individual parameters at particular points in parameter space without taking into account of the combined variability resulting from considering all input parameters simultaneously. Here, we perform a global sensitivity analysis to examine the model’s response to parameter variation within a wider range in the parameter space.

Following the approach by Marino \emph{et al.}\ and Wu \emph{et al.}\ \cite{Marino2008, Wu2013}, partial rank correlation coefficients (PRCC) between the basic reproduction number $R_0$ and each parameter are derived from 5,000 runs of the Latin hypercube sampling (LHS) method \cite{Stein1987}. The parameters are assumed to be random variables with uniform distributions with its mean value listed in Table~\ref{vaueR0ar3}.

With these 5,000 runs of LHS, the derived distribution of $R_0$ is given in Figure~\ref{histogramofR0opcar3}. This sampling shows that the mean of $R_0$ is 2.0642 and the standard deviation is 2.6865. The probability that $R_0>1$ is 54.86\%. This implies that for the mean of parameter values given in Table~\ref{vaueR0ar3}, we may be confident that the model predicts a endemic state.
\begin{figure}[t!]
\begin{center}
\includegraphics[width=\textwidth]{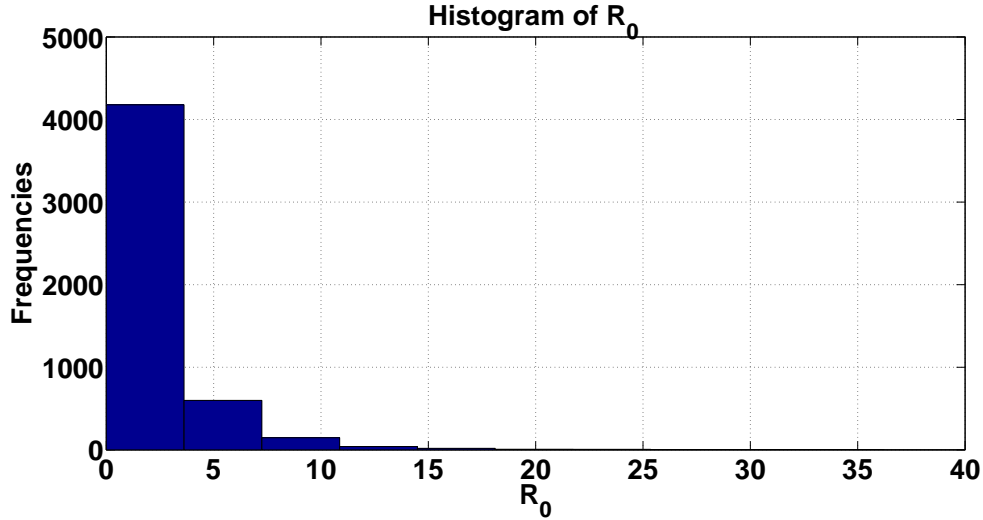}
\caption{Sampling distribution of $R_0$ from 5,000 runs of Latin hypercube sampling. The mean of $R_0$ is 2.0642  and the standard deviation is 2.6865. Furthermore, $\mathbb{P}(R_0>1)=0.5486$.  \label{histogramofR0opcar3}}
\end{center}
\end{figure}

We now use sensitivity analysis to analyze the influence of each parameter on the basic reproductive number. From the previously sampled parameter values, we compute the PRCC between $R_0$ and each parameter of model (\ref{AR3}). The parameters with large PRCC values ($>0.5$ or $<-0.5$) statistically have the most influence \cite{Wu2013}. The results, displayed in Table \ref{prccR0ar3} and Figure \ref{F:SAresultsar3opc} (b), show that the parameters $\alpha_{1}$, the human protection rate, has the highest influence on $R_0$. This suggests that individual protection may potentially be the most effective strategy to reduce $R_0$. The other parameter with an important effect are $\alpha_2$, $\beta_{hv}$, $\beta_{vh}$ and $\theta$. 
%The parameters with very little effect on $R_0$

We note that the order of the most important parameters for $R_0$ from the local sensitivity analysis not match those from the global sensitivity analysis, showing that the local results are not robust.
\begin{table}[t!]
\begin{center}
\caption{Partial Rank Correlation Coefficients between $R_0$ and each parameters of model (\ref{AR3}).\label{prccR0ar3}}
\begin{tabular}{lcccccc}
\hline
Parameter&Correlation &Parameter&Correlation &Parameter&Correlation \\
&Coefficients&&Coefficients&&Coefficients\\
\hline
$\alpha_1$&--0.6125&$l$&0.3767&$\gamma_v$&0.0378\\
$\alpha_2$&0.5960&$\epsilon$&--0.3348&$\mu_L$&--0.0357\\
$\beta_{hv}$&0.5817&$s$&0.2945&$c_m$&-0.0271\\
$\beta_{vh}$&0.5815&$\sigma$&--0.1808&$\eta_h$&0.0178\\
$\theta$&0.5078&$\mu_P$&--0.1594&$\eta_1$&-0.0161\\
$a$&0.4810&$\mu_h$&0.1306&$\mu_E$&-0.0113 \\
$\mu_v$&--0.3911&$\gamma_h$&--0.0605&$\xi$&--0.0109\\
$\Gamma_L$&0.4195&$\eta_v$&0.0578&$\delta$&-0.0077\\
$\Gamma_E$&0.3888&$\mu_b$&0.0439&$\eta_2$&0.0037\\
$\Lambda_h$&--0.3876&$\omega$&0.0410\\
\hline
\end{tabular}
\end{center}
\end{table}

\begin{figure}[t!]
 %\centering
 %\begin{subfigure}[b]{1\textwidth}
  \includegraphics[width=\textwidth]{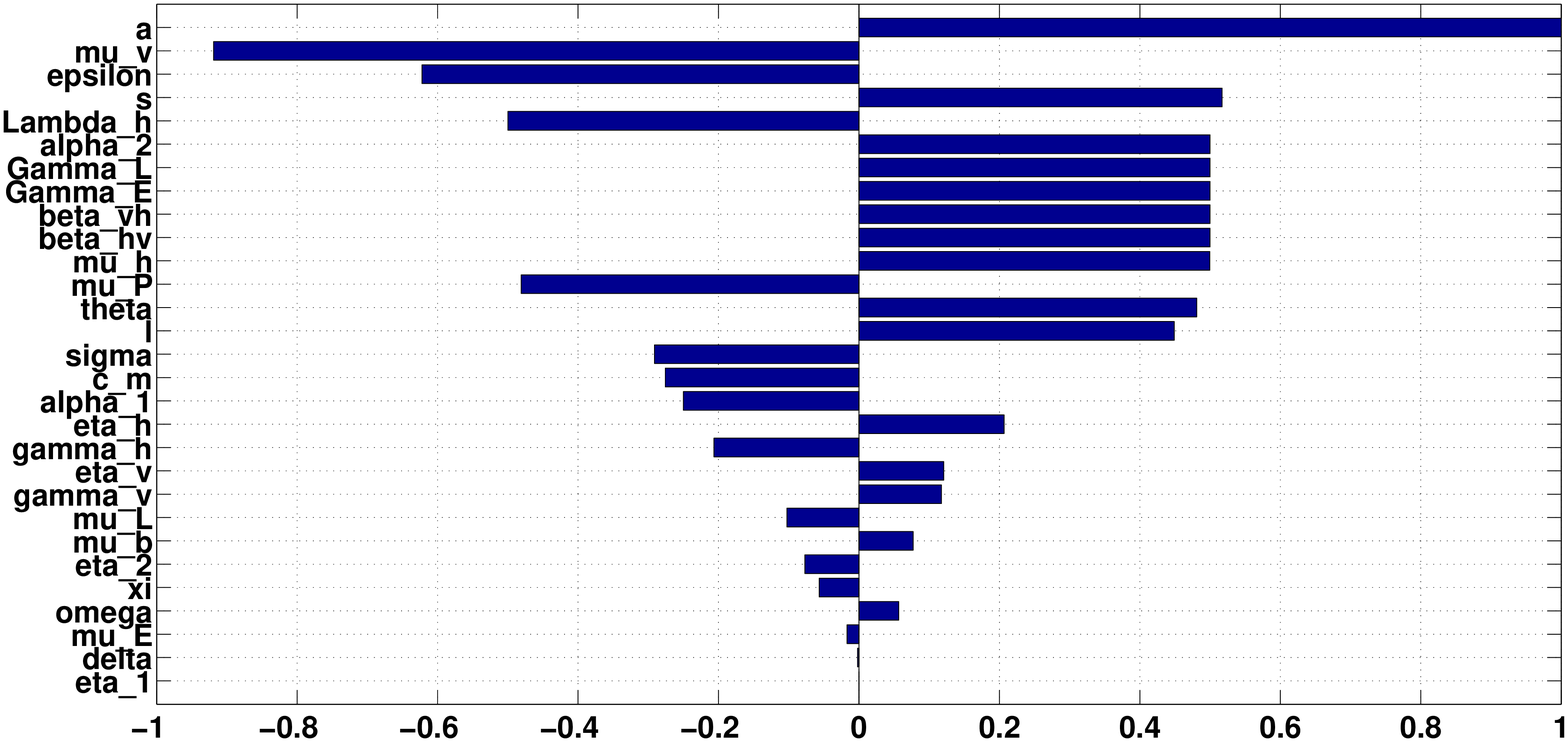}(A)
  %\caption{Local sensitivity indices for $R_0$}
  %\label{F:SAR0}
 %\end{subfigure}%
 %\hspace{0.05\textwidth}
 %\begin{subfigure}[b]{1\textwidth}
  \includegraphics[width=\textwidth]{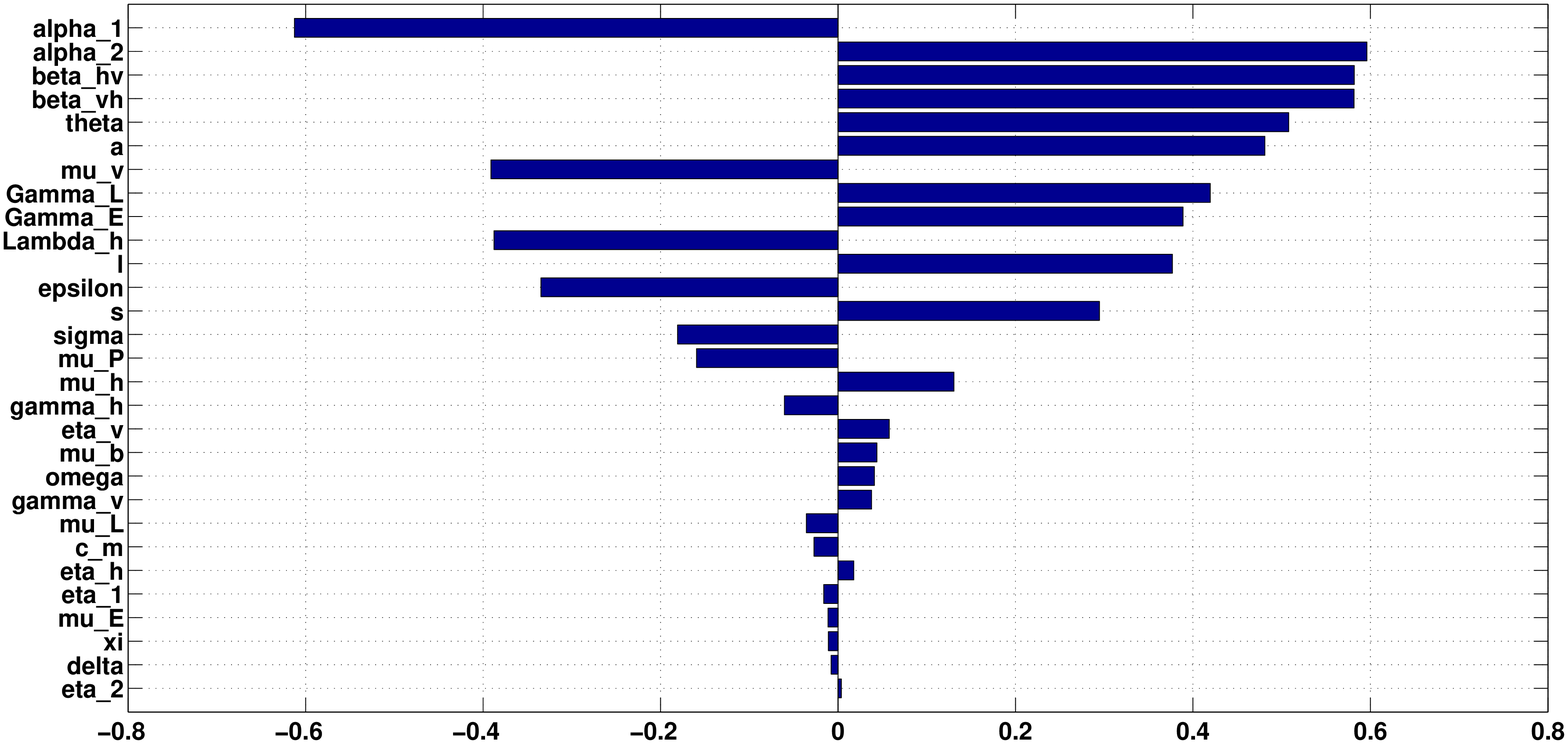}(B)
  %\caption{Partial rank correlation coefficients for $R_0$}
  %\label{F:PRCCR0}
 %\end{subfigure}
 \caption{Local (A) and global (B) sensitivity indices for $R_0$ against model parameters show that the local sensitivity results are not robust: the order of the most important parameters for $R_0$ from the local sensitivity analysis not match those from the global sensitivity analysis. }\label{F:SAresultsar3opc}
\end{figure}

\section{Numerical simulations and discussions}
\label{secNumear3opc}
In the previous model \cite{AbbouEtAl2015}, we have shown that the use of a vaccine with efficacy of about 60\%, was to be accompanied by other measurements control such as means of personal protection (Information in relation to the damage caused by these diseases, spanning wearing clothes during hours of vector activity, use of repellents), vector control (combinig the use of Adulticide to kill adult vectors, chemical control with use of Larvicide to kill the eggs and larvae, and mechanical control to reduce the number of breeding sites at least near inhabited areas)~\cite{duch}. Here, we investigate and compare numerical results, with the different scenario. We use the following initial state variables $S_h(0)=700$, $V_h(0)=10$, $E_h(0)=220$, $I_h(0)=100$, $R_h(0)=60$, $S_v(0)=3000$, $E_v(0)=400$, $I_v(0)=120$, $E(0)=10000$, $L=5000$, $P=3000$.

\subsection{Strategy A: Vaccination combined  with individual protection only}
In this strategy, we consider the model \eqref{AR3} without vector control. we set $\alpha_2=1$ and $c_m=\eta_1=\eta_2=0$ and vary the parameter related to individual potection, namely $\alpha_1$, between 0 and 0.8. The values of other parameters are given in Table \ref{vaueR0ar3}. Figure~\ref{VAC+alpha1} shows that the increase of the individual protection level, permit to reduce  the total number of infected humans, and the total number of infected vectors, but has no impact on the populations of eggs and larvae. However, from this figure, it is clear that, this reduction is significant if the level of protection must turn around 80\% at least, and this, over a long period. Thus, continuous education campaigns of people, on how to protect themselves individually, are important in the fight against the spread of arboviral diseases. 
\begin{figure}[t!]
\begin{center}
\includegraphics[width=\textwidth]{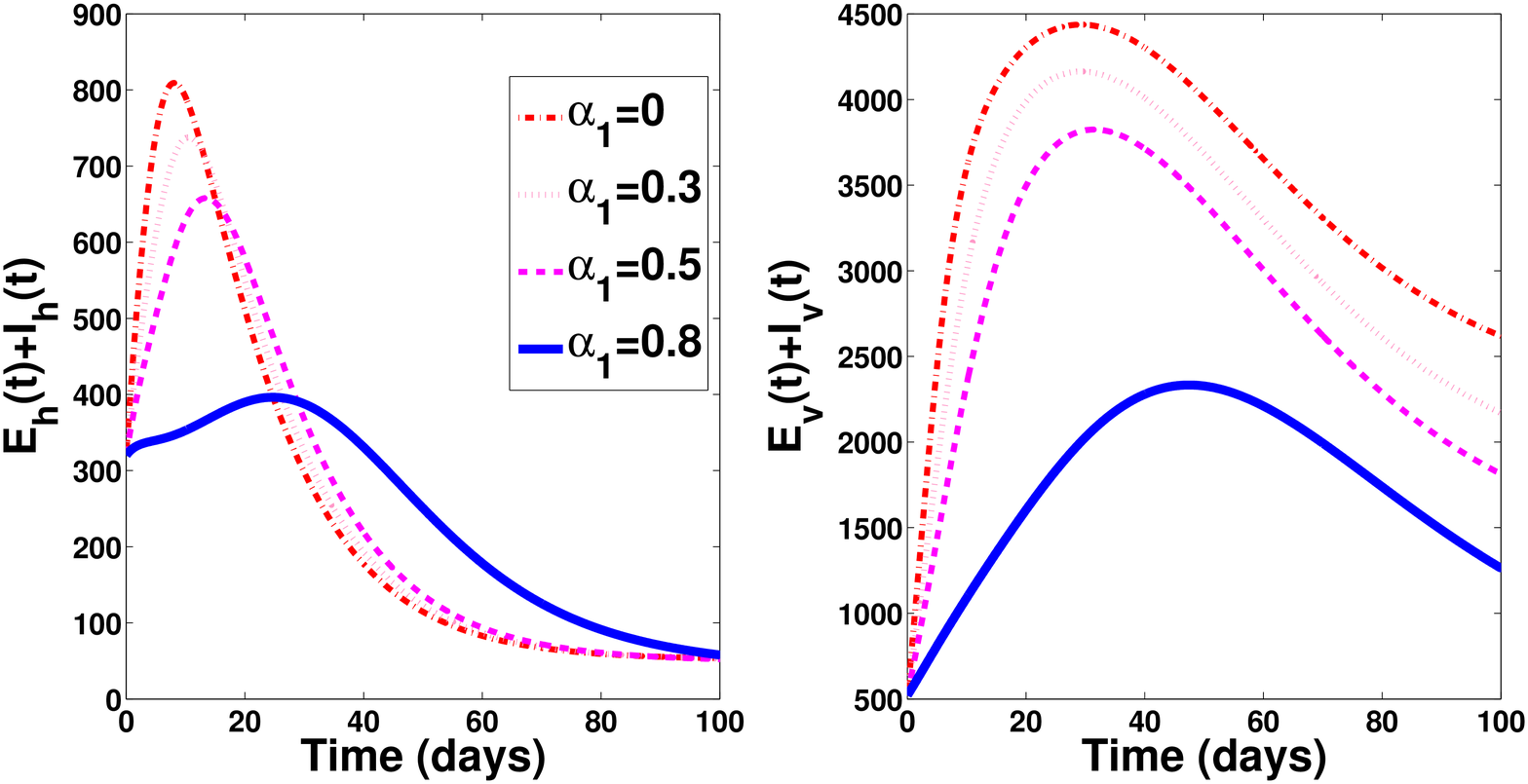}
\includegraphics[width=\textwidth]{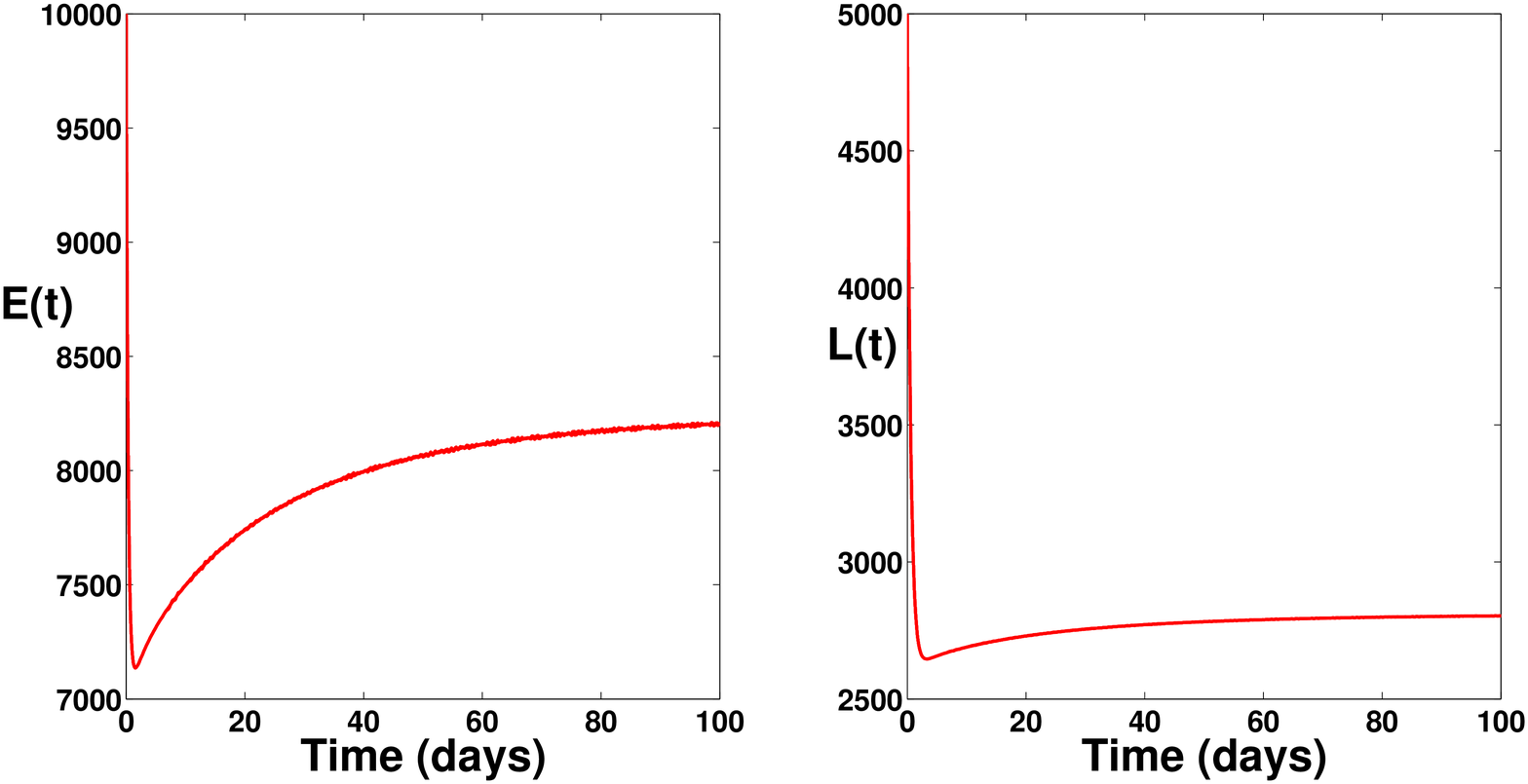}
\caption{Simulations results showing how the total number of infected humans and the total number of infected vectors decrease when the individual protection increase. All others parameters values are in Table~\ref{vaueR0ar3}.\label{VAC+alpha1}}
\end{center}
\end{figure}

\subsection{Strategy B: Vaccination combined  with adulticide}
Nowadays, Deltamethrin is the most used insecticide for impregnation of bednets, because it is a highly effective compound on mosquitoes at of very low doses~\cite{IRD}. However, when sprayed in an open environment, Deltamethrin seems to be effective only during a couple of hours~\cite{Bosc,duch}. Also, its use over a long period and continuously, leads to strong resistance of the wild populations of Aedes aegypti, for example~\cite{IRD}. The mortality of the mosquitoes after spraying varied between 20\% and 80\%. To be more realistic, we will consider the technique called "pulse control" (the control is not continuous in time order is effective only one day every $T$ days)~\cite{duch}. To this aims, we consider that spraying is carried out once a week, and this, for 100 days. We set $\alpha_1=\eta_1=\eta_2=0$ and $\alpha_2=1$.

Simulation result on figure \ref{VAC+cm} show that a mortality rate induced by the use of larvicide, cm, greater than 60\% has a significant impact on the decrease of the total number of infected humans and vectors,  and on the decrease of eggs and larvae.
\begin{figure}[t!]
\begin{center}
\includegraphics[width=\textwidth]{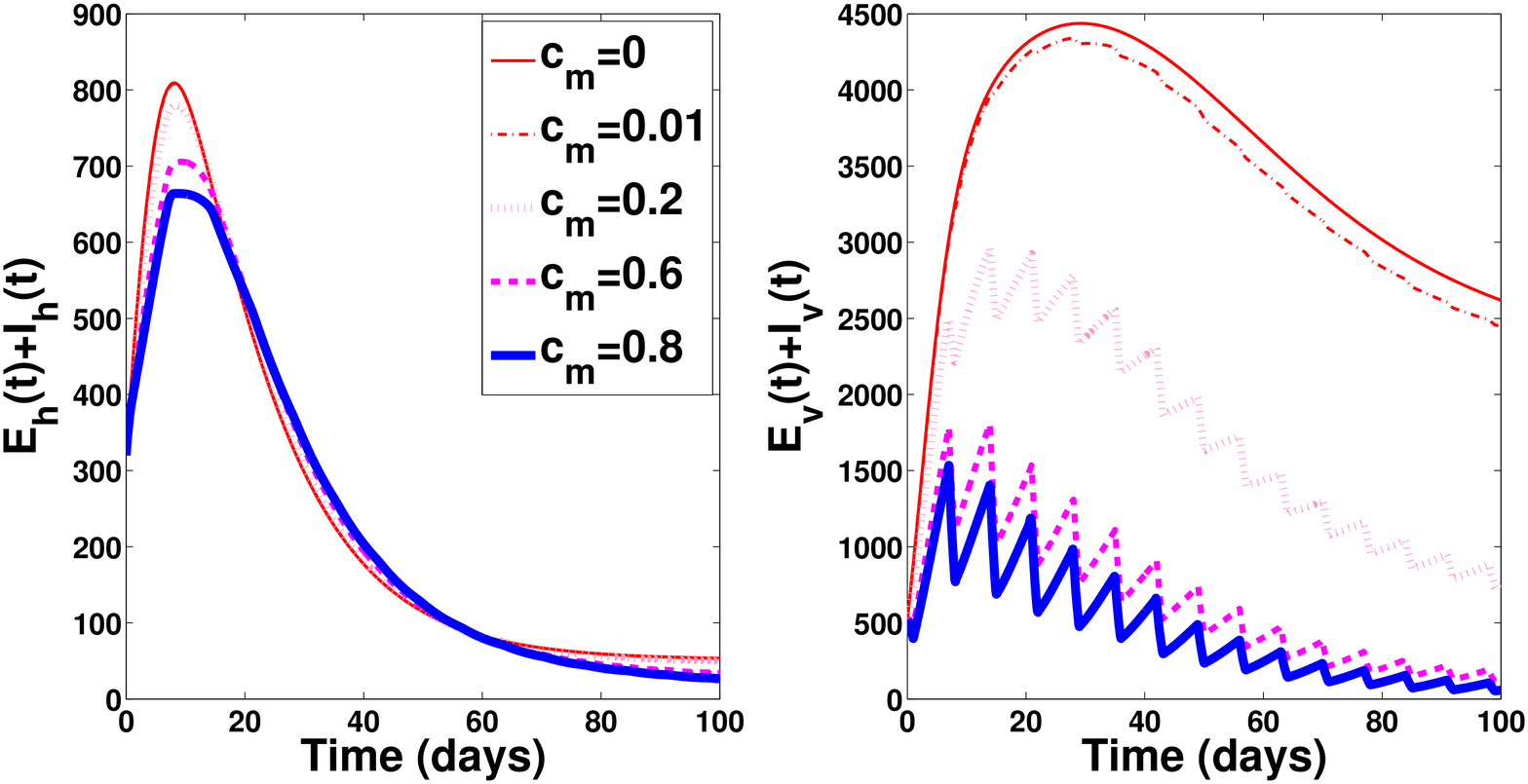}
\includegraphics[width=\textwidth]{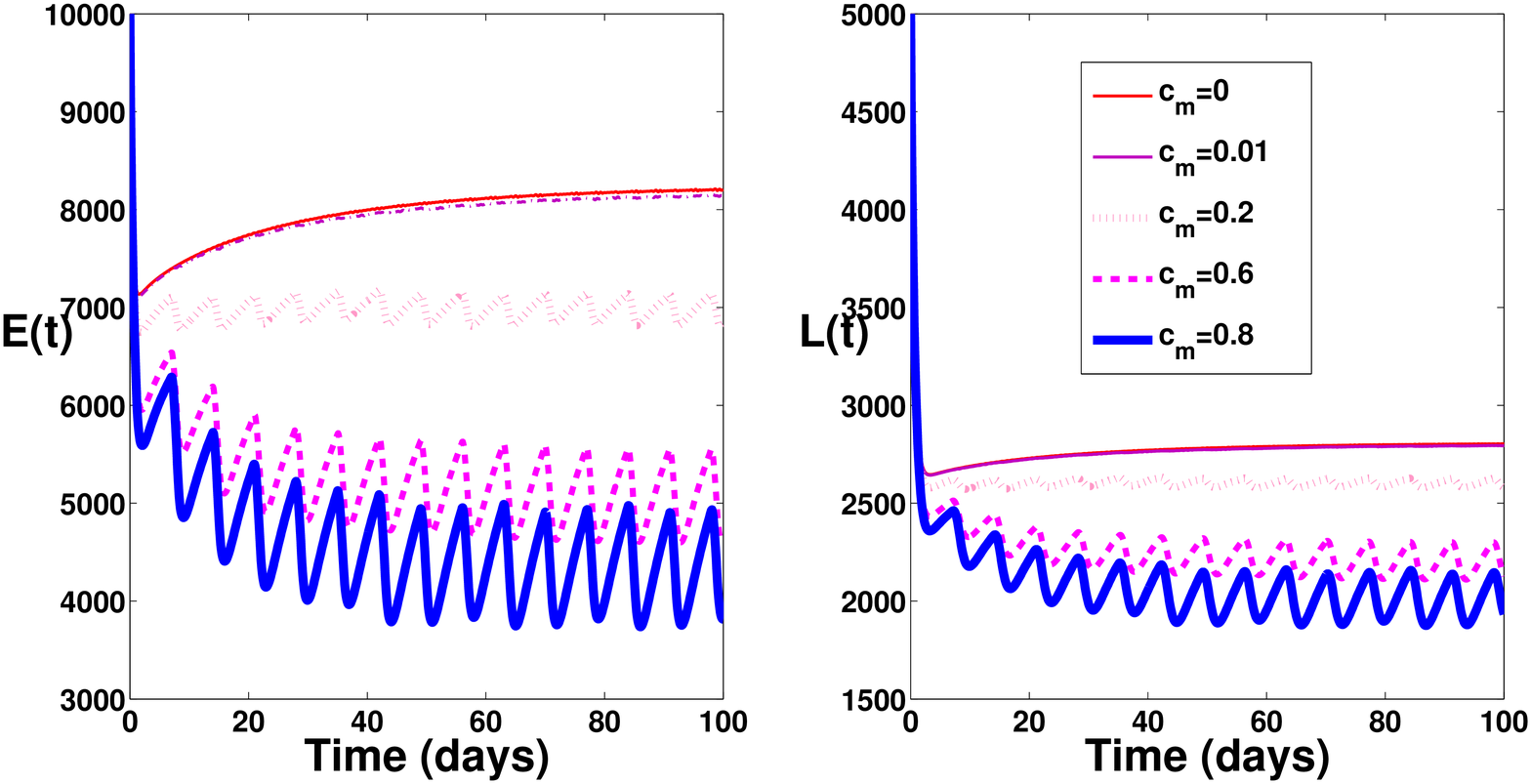}
\caption{Simulations results showing how  the total number of infected vectors, eggs and larvae populations dicrease when the aldulticide control parameter $c_m$ increase. All others parameters values are in Table~\ref{vaueR0ar3}.\label{VAC+cm}}
\end{center}
\end{figure}

\subsection{Strategy C: Vaccination combined  with larvicide}
Since the efficacy and the duration of a larvicide (\emph{Bti=Bacillus thuringiensis var. israelensis}) strongly depend on several factors like water quality, exposure, and even the type of breeding sites. To be more realistic, we thus consider that the duration can vary between a couple of days and two weeks~\cite{duch,Licciardi}. We consider that the larvicide spraying happens once every 15 days, and this, on a period of 100 days. We set $\alpha_1=c_m=0$ and $\alpha_2=1$.

The figure \ref{VAC+eta} shows that the use of larvicide has no significant impact on the decrease of total number of infected humans and vectors, as well as on the number of eggs and larvae. This can be justified by the fact that the use of conventional larvicides neccéssite certain constraints on their use: they can not be used continuously, their duration of action decreases with time. In addition, eggs of certain populations of vectors such as Aedes albopictus, come into prolonged hibernation when conditions in the breading sites are not conducive to their good growth (this is justified by the control rate value $\eta_1=0.001$). Also, the pupae do not  consume anything, until reaching the mature stage. 
\begin{figure}[t!]
\begin{center}
\includegraphics[width=\textwidth]{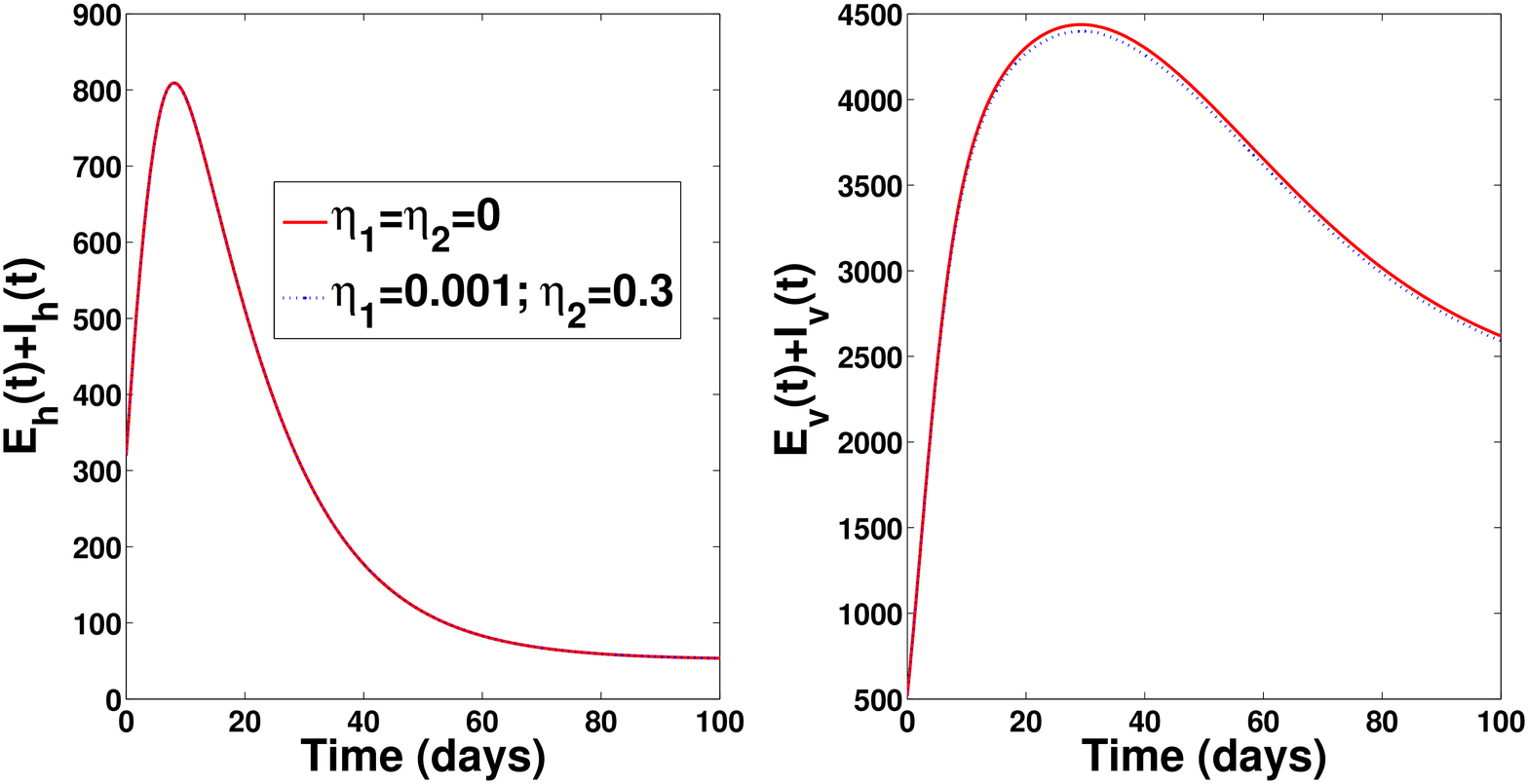}
\includegraphics[width=\textwidth]{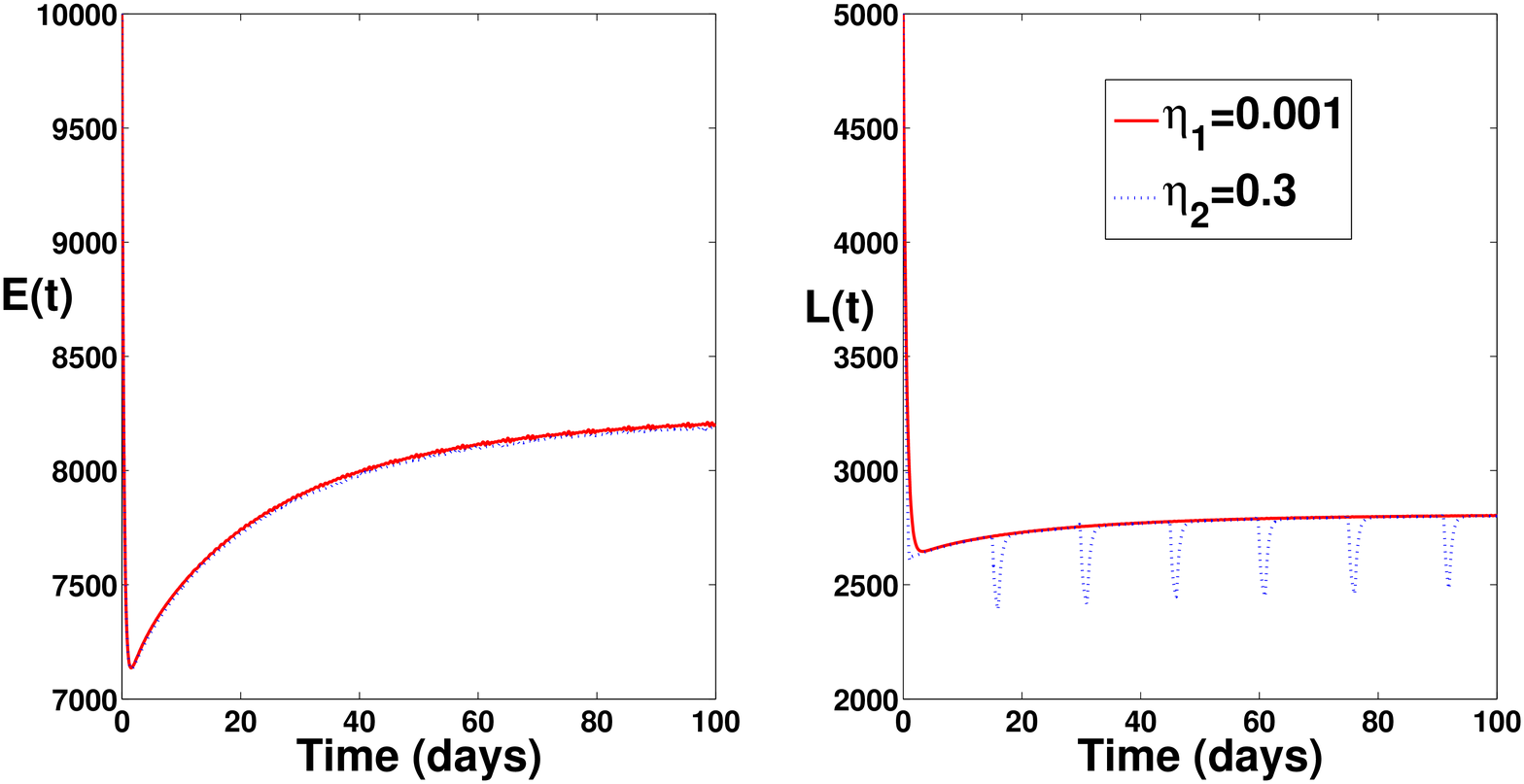}
\caption{Simulations results showing how  the total number of infected huamans, the total number of infected vectors, and the eggs and larvae populations dicrease whith the larvicide control associated parameters $\eta_1$ and $\eta_2$. All others parameters values are in Table~\ref{vaueR0ar3}.\label{VAC+eta}}
\end{center}
\end{figure}

\subsection{Strategy D: Vaccination combined  with mechanical control}
The effectiveness of this type of control depends largely of the awareness campaigns of local populations, in the sense that, to reduce the proliferation of vectors, people must always keep their environment clean by the systematic destruction of breeding sites. So, we consider that this type of control can be achieved by local populations, and this, every daily. We set $\alpha_1=c_m=0=\eta_1=\eta_2$. 

The figure \ref{VAC+alpha2} shows that this type of control is appropriate in the fight against the proliferation of vectors. This can only be possible by the multiplication of local populations awareness campaigns.
\begin{figure}[t!]
\begin{center}
\includegraphics[width=\textwidth]{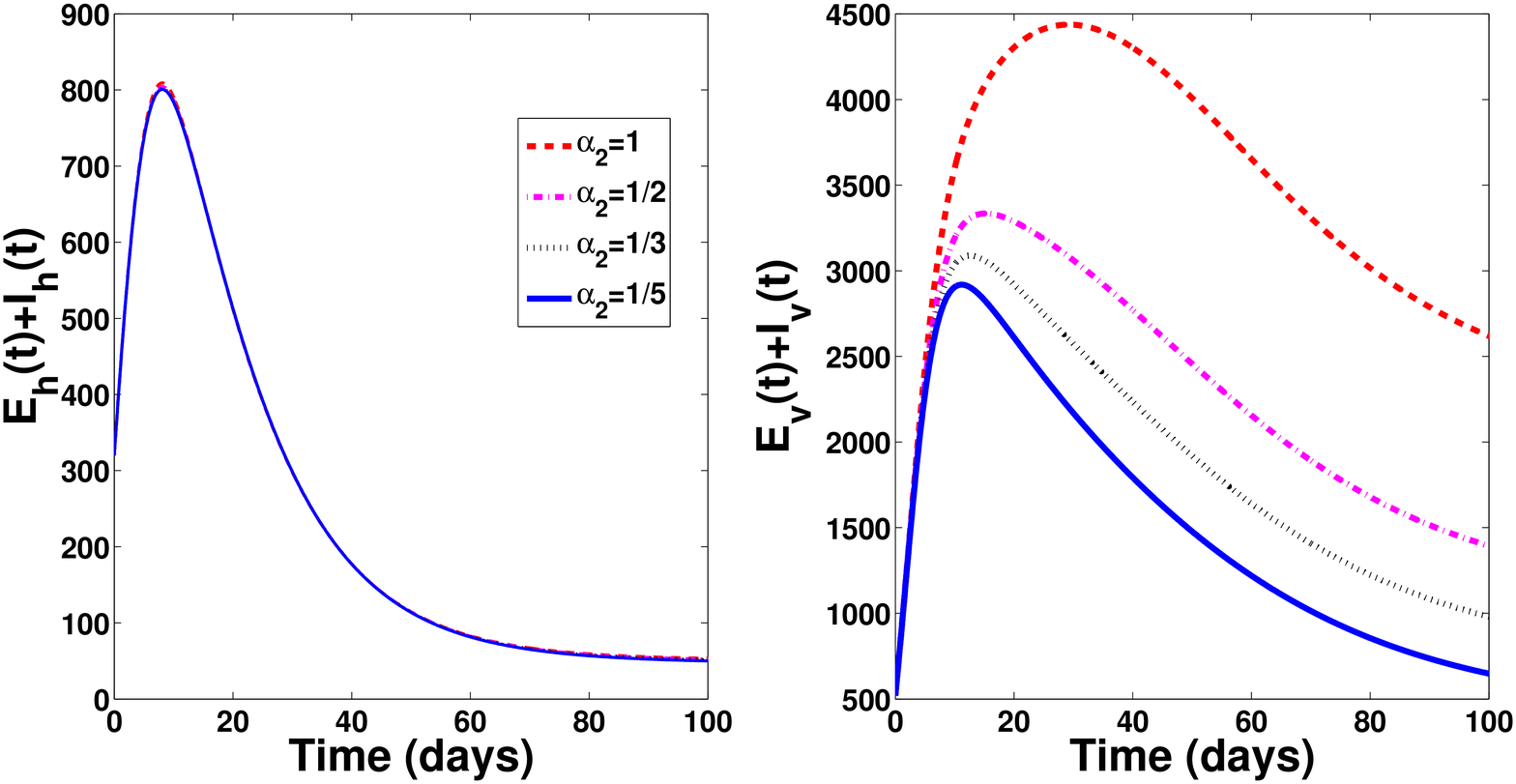}
\includegraphics[width=\textwidth]{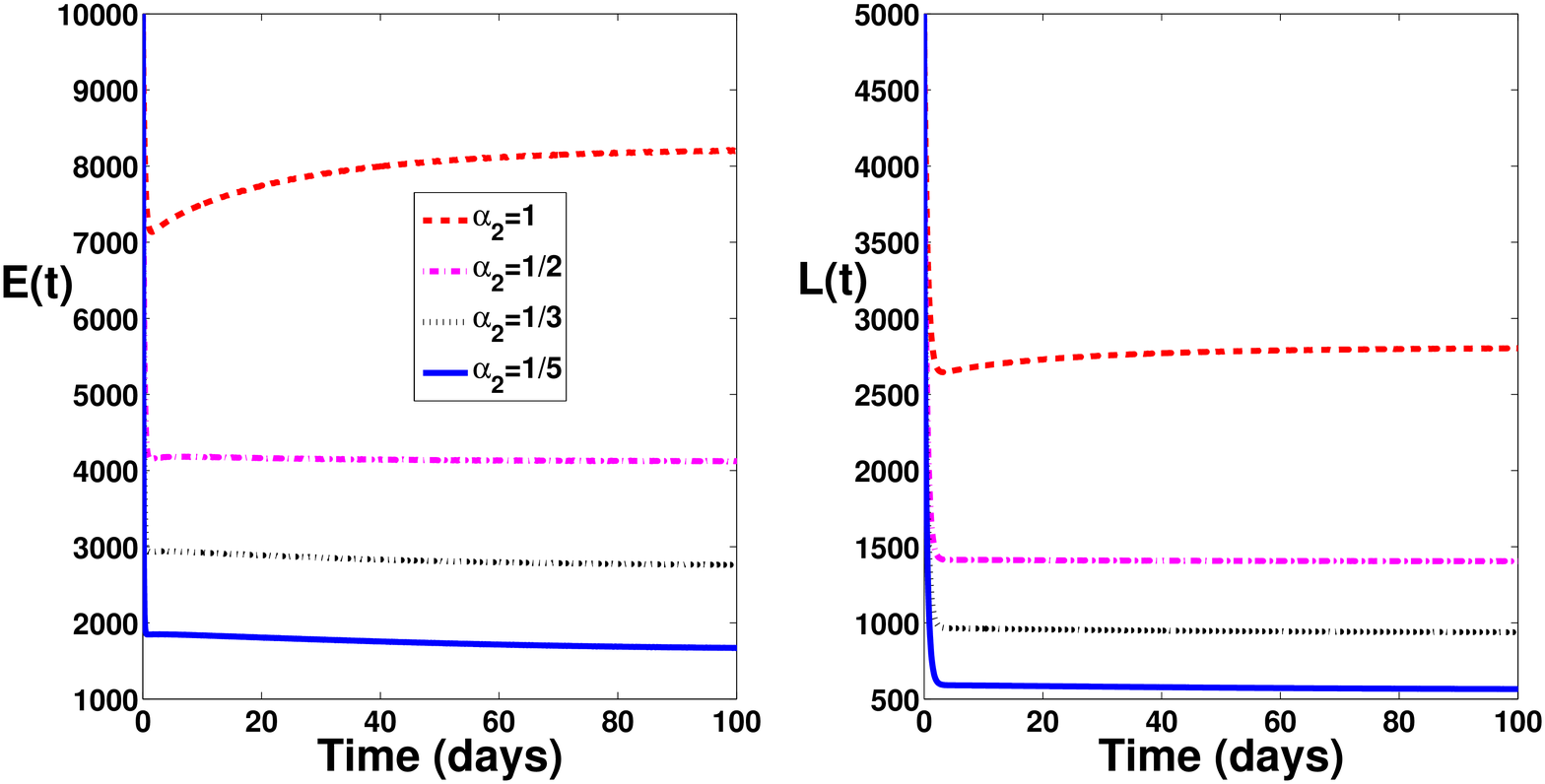}
\caption{Simulations results showing how  the total number of infected vectors, eggs and larvae populations dicrease whith the mechanical control associated parameter $\alpha_2$. All others parameters values are in Table~\ref{vaueR0ar3}.\label{VAC+alpha2}}
\end{center}
\end{figure}

\subsection{Strategy E: Combining vaccination, individual protection and adulticide}
In this strategy, we consider the model \eqref{AR3} without larvicide and mechanical control. we set $\alpha_2=1$ and $\eta_1=\eta_2=0$ and vary the parameter related to individual potection and the use of adulticide, namely $\alpha_1$ and $c_m$, respectively, between 0 and 0.8. The values of other parameters are given in Table \ref{vaueR0ar3}. Figure \ref{VAC+alpha1+cm} shows that the use of the combination of these controls decreases significantly the total number of infected humans, infected vectors as well as the number of eggs and larvae, when its associated rates, namely $\alpha_1$ and $c_m$, are greater than 0.3 and 0.2, respectively.
\begin{figure}[t!]
\begin{center}
\includegraphics[width=\textwidth]{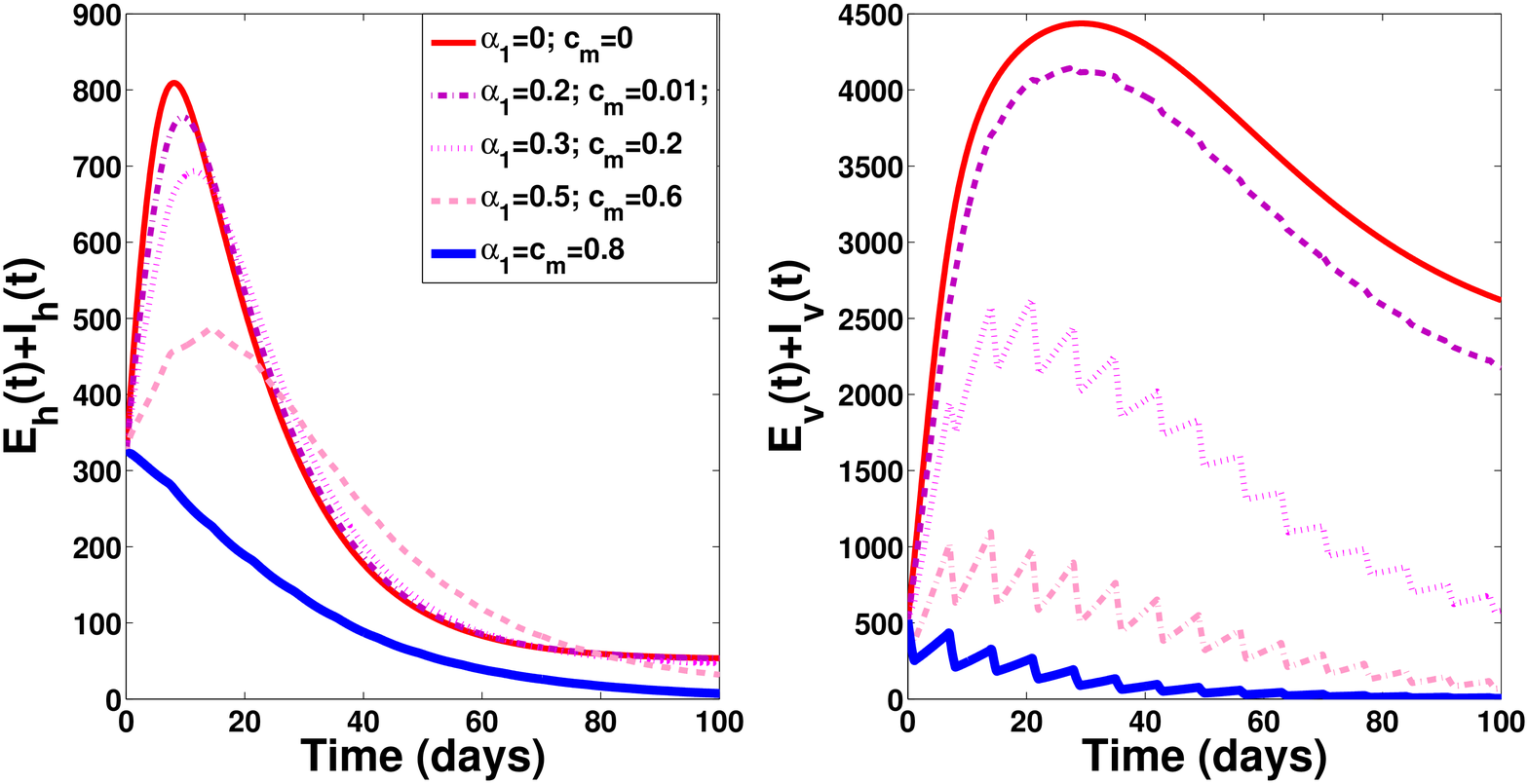}
\includegraphics[width=\textwidth]{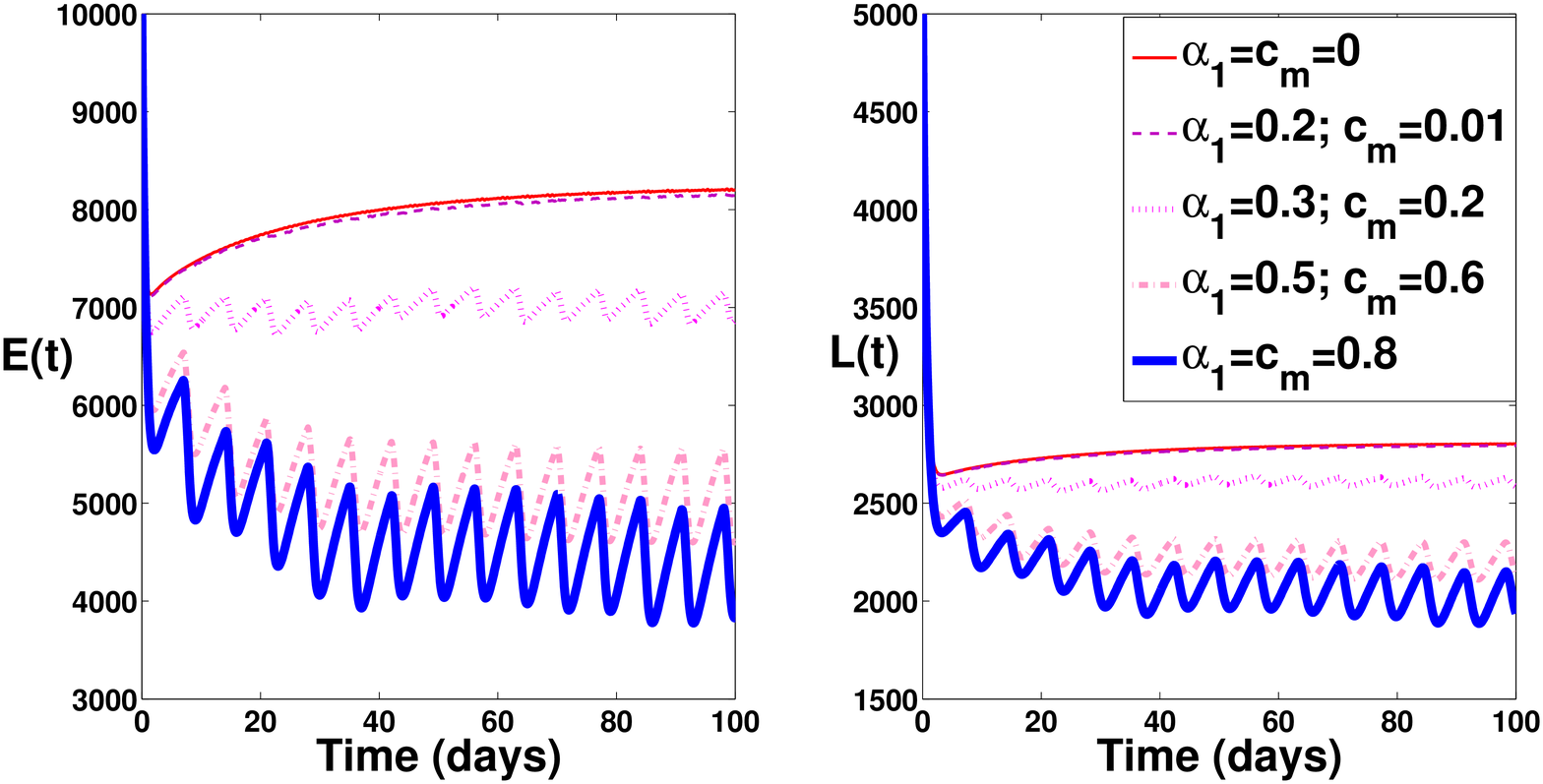}
\caption{Simulations results showing the advantage that we have to combine vaccination, individual protection and adulticide.\label{VAC+alpha1+cm}}
\end{center}
\end{figure}

\subsection{Strategy F: Combining vaccination, individual protection and mechanical control}
Like for strategy E, the combined use of these three types of controls has a positive impact in the vector control.
\begin{figure}[t!]
\begin{center}
\includegraphics[width=\textwidth]{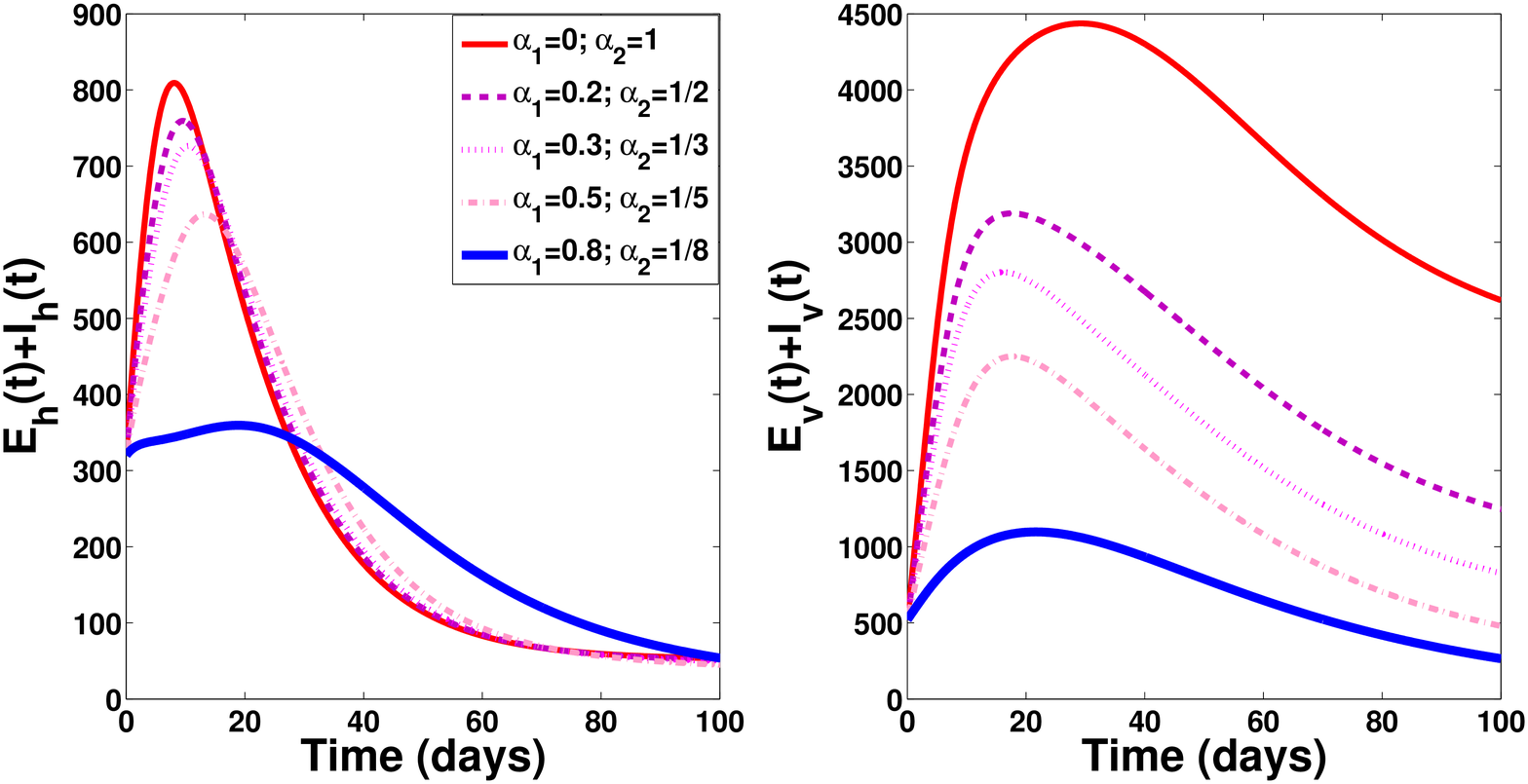}
\includegraphics[width=\textwidth]{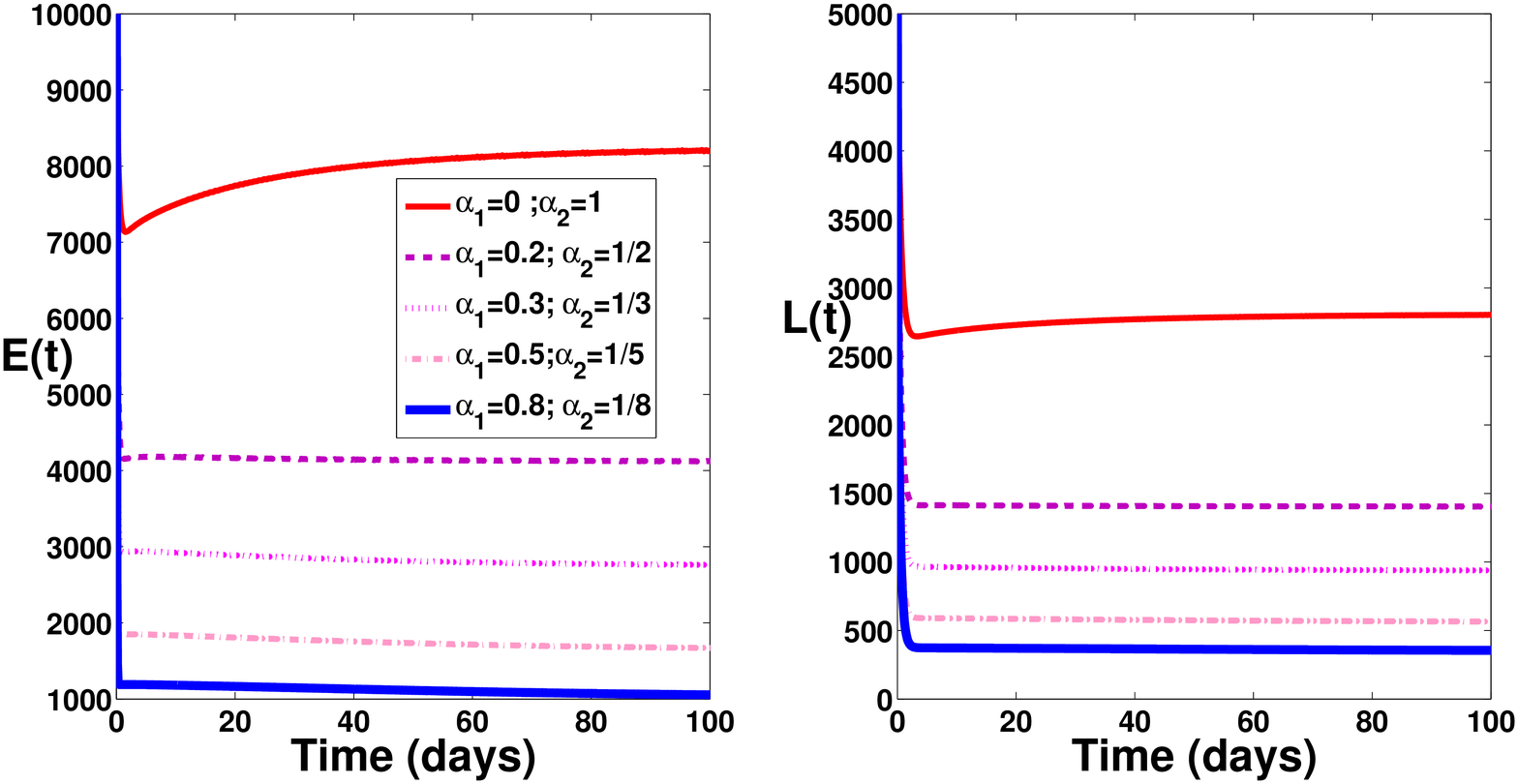}
\caption{Simulations results showing the advantage that we have to combine vaccination, individual protection and mechanical control.\label{VAC+alpha1+alpha2}}
\end{center}
\end{figure}

%\subsection{Strategy H: Combining vaccination, individual protection and vector control}
%\begin{figure}[t!]
%\begin{center}
%\includegraphics[scale=0.30]{FullControl}
%\includegraphics[scale=0.30]{FullControlEL}
%\caption{.\label{FullControl}}
%\end{center}
%\end{figure}
\newpage
%The simulations were carried out using the values of Table \ref{vaueR0ar3}. 

\section{Conclusion}
\label{conclusionAr3opc}
In this paper, we derived and analyzed a deterministic model for the transmission of arboviral diseases with non linear form of infection and complete stage structured model for vectors, which takes into account a vaccination with waning immunity, individual protection and vector control strategies. 

We begin by calculated the net reproductive number $\mathcal{N}$ and the basic reproduction number, $R_0$, and investigated the existence and stability of equilibria. The stability analysis reveals that for $\mathcal{N}\leq 1$, the trivial equilibrium is globally asymptotically stable. When $\mathcal{N}>1$ and $R_0<1$, the disease--free equilibrium is locally asymptotically stable. Under certain condition, the disease--free equilibrium is also globally asymptotically stable.  We found that the model exhibits backward bifurcation. The epidemiological implication of this phenomenon is that for effective eradication and control of diseases, $R_0$ should be less than a critical values less than one. Thus, we proved, that the disease--induced death is the principal cause of the backward bifurcation phenomenon, in the full model and the corresponding model without vaccination.  However, the substitution of standard incidence with mass action incidence removes the backward bifurcation phenomenon. 

We proved that the model admits at least one endemic equilibrium, and only one endemic equilibrium point in the model without disease--induced death, and in the model with mass action incidences, whenever the basic reproduction number is great than unity. 

Using parameters value of Chikungunya and Dengue fever, we calculated the sensitivity indices of the basic reproduction number, $R_0$, to the parameters in the model using both local and global methods. Local sensitivity analysis showed that the model system  is most sensitive to $a$, the average number of mosquitoes bites, followed by $\mu_v$, the natural mortality rate of vectors. Considering that all input parameters vary simultaneously, we use the Latin Hypercube Sampling (LHS) to estimate statistically the mean value of the basic reproduction number. The result showed that the model is in an endemic state, since the mean of $R_0$ is 2.0642, which is greater than unity. Then, using global sensitivity analyisis, we computed the Partial Rank Correlation Coefficients between $R_0$ and each parameter of the model. Unlike the local sensitivity analysis, the global analysis showed that the parameters $\alpha_{1}$, the human protection rate, has the highest influence on $R_0$. The other parameter with an important effect are $\alpha_2$, the efficacity of the mechanical control, $\beta_{hv}$, the probability of transmission of infection from an infectious human to a susceptible vector, $\beta_{vh}$, the probability of transmission of infection from an infectious vector to a susceptible human, and $\theta$, the maturation rate from pupae to adult vectors. This showed that the order of the most important parameters for $R_0$ from the local sensitivity analysis not match those from the global sensitivity analysis. So, the local sensitivity results are not robust.

To assess the impact of combination of different controls, we conduct several simulations, using the called "pulse control" technique. According to the numerical results, we conclude that the use of an imperfect vaccine with low efficiency combined with high individual protection and good vector control statégie (reduction of breeding sites by local populations action, chemical action and use of adulticide), can effectively reduce the transmission of the pathogen and the proliferation of vector populations. However, due to lack of resources to implement these control mechanisms, developing countries should focus on the education of the local populations. Because, unlike diseases such as malaria whose breeding sites of Anopheles mosquitoes are known, those of arboviruses (old tires, flower pots, vases and other hollow...) are smaller and unknown for many local populations, which favor the development of vectors.

Thus, pending the development of a high efficacy vaccine and long-acting, individual protection and the various vector control methods are effective ways to overcome the arboviruses, for developing countries. In addition, the realization of these combination of controls may be too expensive, because it means that, for constant controls, we must keep them at levels  high, and this, for a long time. 
 
\section*{Acknowledgment}
The first author (Hamadjam ABBOUBAKAR) thanks the Direction of UIT of Ngaoundere for their financial assistance in the context of research missions in 2015.
\appendix
\section{Usefull result.\label{Ar3Ap4}}
We use the following result to compute the threshold $R_c$ at Eq. \eqref{R_car3}.
\begin{lemma}[\cite{JCK2008}]
\label{algoJCK2008}
Let M be a square Metzler matrix written in block form
$\left(\begin{array}{cc}
A&B\\
C&D
\end{array} \right) $
with A and D square matrices. M is Metzler stable if and only if matrices $A$ and $D-CA^{-1}B$
are Metzler stable.
\end{lemma}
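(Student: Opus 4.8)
The plan is to reduce the equivalence to the positive-vector characterisation of Metzler stability: a Metzler matrix $M$ is Hurwitz stable if and only if there exists a vector $v\gg 0$ (componentwise strictly positive) with $Mv\ll 0$ (componentwise strictly negative) --- this is the Metzler analogue of the nonsingular $M$-matrix property \cite{Berman,Jacquez}. Two elementary remarks then do all the work. First, any principal submatrix of a Metzler stable matrix is again Metzler stable: from $Mv\ll 0$ with $v\gg 0$ one simply discards the nonnegative contribution of the remaining, positive coordinates. Second, if $A$ is Metzler stable then $A$ is invertible with $-A^{-1}\geq 0$ entrywise, so that for $B,C\geq 0$ the matrix $D-CA^{-1}B=D+C(-A^{-1})B$ is the Metzler matrix $D$ plus a nonnegative matrix; hence it is itself Metzler, which is what makes the phrase ``$D-CA^{-1}B$ Metzler stable'' meaningful.

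First I would write $M=\bigl(\begin{smallmatrix}A&B\\ C&D\end{smallmatrix}\bigr)$, noting that $A,D$ are Metzler and $B,C\geq 0$ because $M$ is Metzler, and set $S:=D-CA^{-1}B$ once $A$ is known to be invertible. For the ``if'' direction I would fix $v_1^{0}\gg 0$ with $Av_1^{0}\ll 0$ and $v_2\gg 0$ with $Sv_2\ll 0$, and for small $\varepsilon>0$ put $v_1:=-A^{-1}Bv_2+\varepsilon v_1^{0}$, which is $\gg 0$ since $-A^{-1}Bv_2\geq 0$. A one-line computation gives $Av_1+Bv_2=\varepsilon Av_1^{0}\ll 0$ and $Cv_1+Dv_2=Sv_2+\varepsilon Cv_1^{0}$, and the latter is $\ll 0$ for $\varepsilon$ small (the vector $Cv_1^{0}\geq 0$ is fixed while $Sv_2\ll 0$), so $M(v_1,v_2)^{T}\ll 0$ and $M$ is Metzler stable. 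For the ``only if'' direction I would take $v=(v_1,v_2)\gg 0$ with $Mv\ll 0$; then $Av_1\leq Av_1+Bv_2\ll 0$ shows $A$ is Metzler stable, and writing $u:=-(Av_1+Bv_2)\gg 0$ yields $v_1+A^{-1}Bv_2=-A^{-1}u\geq 0$, hence $Sv_2=Dv_2-CA^{-1}Bv_2\ll -C(v_1+A^{-1}Bv_2)=CA^{-1}u\leq 0$, using $Cv_1+Dv_2\ll 0$ and then $C\geq 0$, $A^{-1}u\leq 0$; thus $Sv_2\ll 0$ with $v_2\gg 0$ and $S$ is Metzler stable.

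I expect no genuinely hard step: once the positive-vector criterion is in hand everything is block-matrix bookkeeping, and the only place requiring care is the inequality reversal when multiplying a block inequality by $A^{-1}$, which is legitimate precisely because $-A^{-1}\geq 0$, together with first checking that $S$ is Metzler so that the criterion applies to it. If one preferred to bypass the positive-vector lemma, an equally short alternative is to set $N:=-M$, observe that $N$ is a $Z$-matrix, and invoke the classical Schur-complement theorem for nonsingular $M$-matrices \cite{Berman}: a $Z$-matrix is a nonsingular $M$-matrix if and only if a leading principal block and its Schur complement both are; translating $-A^{-1}\geq 0$ and the identity $-(D-CA^{-1}B)=N/(-A)$ back to $M$ then gives the statement.
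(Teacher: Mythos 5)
The paper does not actually prove this lemma: it is quoted verbatim from the cited reference \cite{JCK2008} and used as a black box to derive the threshold $R_c$, so there is no in-paper proof to compare against. Your argument is correct and self-contained. It rests entirely on the semipositivity characterisation of Metzler (Hurwitz) stability --- $M$ is stable iff there is $v\gg 0$ with $Mv\ll 0$ --- and both directions check out: in the ``if'' direction the vector $v_1=-A^{-1}Bv_2+\varepsilon v_1^{0}$ gives exactly $Av_1+Bv_2=\varepsilon Av_1^{0}\ll 0$ and $Cv_1+Dv_2=Sv_2+\varepsilon Cv_1^{0}\ll 0$ for small $\varepsilon$; in the ``only if'' direction the chain $Sv_2\ll -C(v_1+A^{-1}Bv_2)=CA^{-1}u\leq 0$ is legitimate because $-A^{-1}\geq 0$ and $C\geq 0$. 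You also correctly flag the point that is easy to overlook, namely that one must first verify $D-CA^{-1}B$ is itself Metzler (it is $D$ plus the nonnegative matrix $C(-A^{-1})B$) before the criterion can be applied to it. The proof in the cited source, and your own suggested alternative, instead go through the equivalence between Metzler stability of $M$ and the nonsingular $M$-matrix property of $-M$ together with the classical Schur-complement theorem for $M$-matrices; that route is shorter if one is willing to import the $M$-matrix machinery, whereas your positive-vector argument is more elementary and makes the block bookkeeping completely explicit. Either way the lemma stands as stated.
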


\section{Proof of Theorem \ref{th3Ar3}.\label{Ar3Ap1}}
The Jacobian matrix of $f$ at the Trivial equilibrium is given by
\begin{equation}
\label{Jacm1Ar3}
Df(\mathcal{E}_{0})=\left( \begin{array}{ccccccccccc}
Df_1&Df_2\\
Df_3&Df_4
\end{array} \right) .
\end{equation} 
where \\
$
Df_1=\left( \begin{array}{cccccc}
-k_1&\omega&0&0&0&0\\
\xi&-k_2&0&0&0&0\\
0&0&-k_3&0&0&0\\
0&0&\gamma_h&-k_4&0&0\\
0&0&0&\sigma&-\mu_h&0\\
0&0&0&0&0&-k_8\\
\end{array} \right),
$
$
Df_3=\left( \begin{array}{ccccccccccc}
0&0&0&0&0&0\\
0&0&0&0&0&0\\
0&0&0&0&0&\mu_b\\
0&0&0&0&0&0\\
0&0&0&0&0&0
\end{array} \right),
$ \\
$
Df_2=\left( \begin{array}{ccccc}
-\dfrac{a(1-\alpha_1)\beta_{hv}\eta_vS^{0}_h}{N^{0}_h}&-\dfrac{a(1-\alpha_1)\beta_{hv}S^{0}_h}{N^{0}_h}&0&0&0\\
-\dfrac{a(1-\alpha_1)\beta_{hv}\pi\eta_vV^{0}_h}{N^{0}_h}&-\dfrac{a(1-\alpha_1)\beta_{hv}\pi V^{0}_h}{N^{0}_h}&0&0&0\\
\dfrac{a(1-\alpha_1)\beta_{hv}\eta_vH^{0}}{N^{0}_h}&\dfrac{a(1-\alpha_1)\beta_{hv}H^{0}}{N^{0}_h}&0&0&0\\
0&0&0&0&0\\
0&0&0&0&0\\
0&0&0&0&\theta\\
\end{array} \right),
$\\
$
Df_4=\left( \begin{array}{ccccccccccc}
-k_9&0&0&0&0\\
\gamma_v&-k_8&0&0&0\\
\mu_b&\mu_b&-k_5&0&0\\
0&0&s&-k_6&0\\
0&0&0&l&-k_7
\end{array} \right),
$
and $H^{0}=S^{0}_h+\pi V^{0}_h$. 

The characteristic polynomial of $Df(\mathcal{E}_{0})$ is given by:
\[
P(\lambda)=-\left(\lambda+k_{3}\right)\left(\lambda+k_{4}\right)\left(
 \lambda+k_{8}\right)\left(\lambda+k_{9}\right)\left(\lambda+\mu
 _{h}\right)\phi_1(\lambda)\phi_2(\lambda)
\]
where\\
$
\phi_1(\lambda)=\lambda^2+(k_{2}+k_{1})\lambda+\mu_h(k_{2}+\xi)
$
and 
$
\phi_2(\lambda)=\lambda^{4}+A_1\lambda^{3}+A_2\lambda^{2}+A_3\lambda+A_4.
$
we have set
\[
\begin{array}{l}
A_1=k_5+k_6+k_7+k_8,\;\;A_2=k_8(k_5+k_6+k_7)+k_7(k_5+k_6)+k_5k_6,\\
A_3=k_5k_6k_7+k_8(k_5k_6+k_7(k_5+k_6)),\;\; A_4=k_5k_6k_7k_8(1-\mathcal{N}).
\end{array}
\]
The roots of $P(\lambda)$ are $\lambda_1=-\mu_h$, $\lambda_1=-k_1$, $\lambda_2=-k_3$, $\lambda_3=-k_4$, $\lambda_4=-k_8$, $\lambda_4=-k_9$, and the others roots are the roots of $\phi_1(\lambda)$ and $\phi_2(\lambda)$. The real part of roots of $\phi_1(\lambda)$  are negative. Since $\mathcal{N}<1$, it is clear that all coefficients of $\phi_2(\lambda)$ are always positive. Now we just have to verify that the Routh--Hurwitz criterion holds for polynomial $\phi_2(\lambda)$.
To this aim, setting 
$H_1=A_1$, $H_2=\begin{vmatrix}
A_1&1\\
A_3&A_2
\end{vmatrix}$,
$H_3=\begin{vmatrix}
A_1&1&0\\
A_3&A_2&A_1\\
0&A_4&A_3
\end{vmatrix}$,
$H_4=\begin{vmatrix}
A_1&1&0&0\\
A_3&A_2&A_1&1\\
0&A_4&A_3&A_2\\
0&0&0&A_4
\end{vmatrix}=A_4H_3$.\\
The Routh-Hurwitz criterion of stability of the trivial equilibrium $\mathcal{E}^{0}$ is given by
\begin{equation}
\label{routh-Hurwith}
\left\lbrace 
\begin{array}{c}
H_1>0\\
H_2>0\\
H_3>0\\
H_4>0
\end{array}\right.\Leftrightarrow
\left\lbrace 
\begin{array}{c}
H_1>0\\
H_2>0\\
H_3>0\\
A_4>0
\end{array}\right.
\end{equation}
We have $H_1=A_1=k_5+k_6+k_7+k_8>0$,
\[
\begin{split}
H_2&=A_1A_2-A_3\\
&=\left(k_{7}+k_{6}+k_{5}\right)k^2_8+\left(k_{7}^2+\left(2k_{6}+2k_{5}\right)k_{7} +k_{6}^2+2k_{5}k_{6}+k_{5}^2\right)k_8\\
&+\left(k_{6}+k_{5}\right)k_{7}^2
+\left(k_{6}^2+2k_{5}k_{6}+k_{5}^2\right)k_{7}+k_{5}k_{6}^2+k_{5}^2k_{6}
\end{split}
\]
{\footnotesize
\[
\begin{split}
H_3&=A_1A_2A_3-A^{2}_1A_4-A^{2}_3\\
&=(k_{6}+k_{5})\left(k_{7}^2+\left(k_{6}+k_{5}\right)k_{7}+k_{5}k_{6}\right)k^3_{8}\\
&+\left(\mu_{b}ls\theta+\left(k_{6}+k_{5}\right)k_{7}^
 3+2(k_{6}+k_{5})^{2}k_{7}^2+\left(
 k_{6}^3+4k_{5}k_{6}^2+4k_{5}^2k_{6}+k_{5}^3\right)k_{7}+k
 _{5}k_{6}^3+2k_{5}^2k_{6}^2+k_{5}^3k_{6}\right)k^2_{8}\\
&+\left[ \left(2k_{7}+2k_{6}+2k_{5}\right)\mu_{b}ls
 \theta+\left(k_{6}^2+2k_{5}k_{6}+k_{5}^2\right)k_{7}^3+
 \left(k_{6}^3+4k_{5}k_{6}^2+4k_{5}^2k_{6}+k_{5}^3\right)k
 _{7}^2\right.\\
&\left.+\left(2k_{5}k_{6}^3+4k_{5}^2k_{6}^2+2k_{5}^3k_{6}
 \right)k_{7}+k_{5}^2k_{6}^3+k_{5}^3k_{6}^2\right]k_8+
 \left(k_{7}^2+\left(2k_{6}+2k_{5}\right)k_{7}+k_{6}^2+2k_{5}
 k_{6}+k_{5}^2\right)\mu_{b}ls\theta\\
 &+\left(k_{5}k_{6}^2+k_{5}^2k_{6}\right)k_{7}^3+\left(k_{5}k_{6}^3+2k_{5}^2k
 _{6}^2+k_{5}^3k_{6}\right)k_{7}^2+\left(k_{5}^2k_{6}^3+k_{5}^3
 k_{6}^2\right]k_{7}
\end{split}
\]
}
We always have $H_1>0$, $H_2>0$, $H_3>0$ and  $H_4>0$ if $\mathcal{N}<1$. Thus, the trivial equilibrium 
$\mathcal{E}_{0}$ is locally asymptotically stable whenever $\mathcal{N}<1$.

We assume the net reproductive number $\mathcal{N}>1$. Following the procedure and the
notation in \cite{vawa02}, we may obtain the basic reproduction number $R_0$ as the dominant eigenvalue of the \emph{next--generation matrix} \cite{dihe,vawa02}. Observe that model (\ref{AR3}) has four infected populations, namely $E_h$, $I_h$, $E_v$, $I_v$. It follows that the matrices $F$ and $V$ defined in \cite{vawa02}, which take into account the new infection terms and remaining transfer terms, respectively, are given by\\
$
F=\left( \begin{array}{cccc}
0&0&\dfrac{a(1-\alpha_1)\beta_{hv}\eta_vH^{0}}{N^{0}_h}&\dfrac{a(1-\alpha_1)\beta_{hv}H^{0}}{N^{0}_h}\\
0&0&0&0\\
\dfrac{a(1-\alpha_1)\beta_{vh}\eta_vS^{0}_v}{N^{0}_h}&\dfrac{a(1-\alpha_1)\beta_{vh}S^{0}_v}{N^{0}_h}&0&0\\
0&0&0&0
\end{array}\right),$\\
$V=\left(\begin{array}{cccc}
k_3&0&0&0\\
-\gamma_h&k_4&0&0\\
0&0&k_9&0\\
0&0&-\gamma_v&k_8
\end{array}\right).
$

The dominant eigenvalue of the next--generation matrix $FV^{-1}$ is given by \eqref{R0ar3}.
The local stability of the disease--free equilibrium $\mathcal{E}_{1}$ is a direct consequence of Theorem 2 of \cite{vawa02}. This ends the proof.\hfill

\section{Proof of Theorem \ref{GAsTEar3}.\label{Ar3Ap2}}
Setting $Y=X-TE$ with $X=(S_h,V_h,E_h,I_h,R_h,S_v,E_v,I_v,E,L,P)^{T}$, $H^{0}=(S^{0}_h+\pi V^{0}_h)$,
$A_{99}=\left(k_5+\mu_b\dfrac{S_v+E_v+I_v}{K_E}\right)$, and $A_{10}=\left(k_6+s\dfrac{E}{K_L}\right)$.
we can rewrite \eqref{AR3} in the following manner
\begin{equation}
\label{model2Ar2} \dfrac{dY}{dt}=\mathcal{B}(Y)Y
\end{equation}
where $ \mathcal{B}(Y)=\left( \begin{array}{cc}
A(Y)&B(Y)\\
C(Y)&D(Y)
\end{array}\right),
$
 with\\
{\small
$ A(Y)=\left( \begin{array}{cccccc}
-(\lambda^{c}_h+k_1)&\omega&0&0&0&0\\
\xi&-(\pi\lambda^{c}_h+k_2)&0&0&0&0\\
\lambda^{c}_h&\pi\lambda^{c}_h&-k_3&0&0&0\\
0&0&\gamma_h&-k_4&0&0\\
0&0&0&\sigma&-\mu_h&0\\
\end{array}\right),
$\\
$ B(Y)=\left( \begin{array}{ccccc}
-\dfrac{a(1-\alpha_1)\beta_{hv}\eta_vS^{0}_h}{N_h}&-\dfrac{a(1-\alpha_1)\beta_{hv}S^{0}_h}{N_h}
&0&0&0\\
-\dfrac{a(1-\alpha_1)\beta_{hv}\eta_v\pi V^{0}_h}{N_h}&-\dfrac{a(1-\alpha_1)\beta_{hv}\pi V^{0}_h}{N_h}&0&0&0\\
\dfrac{a(1-\alpha_1)\beta_{hv}\eta_vH^{0}}{N_h}&\dfrac{a(1-\alpha_1)\beta_{hv}H^{0}}{N_h}&0&0&0\\
0&0&0&0&0\\
0&0&0&0&0\\
\end{array}\right),
$\\
$ C(Y)=\left( \begin{array}{ccccccccccc}
0&0&0&0&0&-(\lambda^{c}_v+k_8)\\
0&0&0&0&0&\lambda^{c}_v\\
0&0&0&0&0&0\\
0&0&0&0&0&\mu_b\\
0&0&0&0&0&0\\
0&0&0&0&0&0\\
\end{array}\right),
$
$ D(Y)=\left( \begin{array}{ccccc}
0&0&0&0&\theta\\
-k_9&0&0&0&0\\
\gamma_v&-k_8&0&0&0\\
\mu_b&\mu_b&-A_{99}&0&0\\
0&0&s&-A_{10}&0\\
0&0&0&l&-k_7\\
\end{array}\right).
$
}

 It is clear that $Y=(0,0,0,0,0,0,0,0,0,0,0)$ is the only equilibrium.

Then it suffices to consider the following Lyapunov function
$\mathcal{L}(Y)=<g,Y>$ were
$g=\left(1,1,1,1,1,1,1,1,\dfrac{k_8}{\mu_b},\dfrac{k_5k_8}{\mu_bs},\dfrac{k_5k_6k_8}{\mu_bsl}\right)$.
Straightforward computations lead that
\[
\begin{split}
\dot{\mathcal{L}}(Y)&=<g,\dot{Y}>\overset{\mathrm{def}}{=}
<g,\mathcal{B}(Y)Y>\\
&=-\mu_hY_1-\mu_hY_2-\mu_hY_3-(\mu_h+\delta)Y_4-\mu_hY_5\\
&-\dfrac{k_8}{K_E}(Y_6+Y_7+Y_8)-\dfrac{k_5k_8}{\mu_bK_L}Y_9Y_{10}+\theta\left(1-\dfrac{1}{\mathcal{N}}\right)Y_{11}
\end{split}
\]
We have $\dot{\mathcal{L}}(Y)<0$ if $\mathcal{N}\leq 1$ and $\dot{\mathcal{L}}(Y)=0$ if $Y_i=0$, $i=1,2,\hdots, 11$ (i.e $S_h=S^{0}_h$, $V_h=V^{0}_h$ and $E_h=I_h=R_h=S_v=E_v=I_v=E=L=P=0$). Moreover, the maximal invariant set contained in $\left\lbrace \mathcal{L}\vert \dot{\mathcal{L}}(Y)=0\right\rbrace $ is $(0,0,0,0,0,0,0,0,0,0,0)$. Thus, from Lyapunov theory, we deduce that $(0,0,0,0,0,0,0,0,0,0,0)$ and thus, $\mathcal{E}_0$, is GAS if and only if $\mathcal{N}\leq 1$.

\section{Proof of Theorem \ref{existenceEEAR3}.\label{Ar3Ap5}}
In order to determine the existence of endemic equilibria, i. e. equilibria with all
positive components, say
\[
\mathcal{E}^{**}=\left(S^{*}_h,V^{*}_h,E^{*}_h,I^{*}_h,R^{*}_h,S^{*}_v,E^{*}_v,I^{*}_v,E,L,P\right),
\]
we have to look for the solution of the algebraic system of equations obtained by equating
the right sides of system \eqref{AR3} to zero. In this way we consider two case:
\paragraph{(i)} Special case: Absence of disease--induced death in human ($\delta=0$)\\
Note that in the absence of disease--induced death in human population, we have $N^{*}_h=N^{0}_h=\Lambda_h/\mu_h$. Let 
\begin{equation}
\label{fieear3}
\lambda^{c,*}_h=\dfrac{a(1-\alpha_1)\beta_{hv}(\eta_vE^{*}_v+I^{*}_v)}{N^{*}_{h}},\;\;\;
\lambda^{c,*}_v=\dfrac{a(1-\alpha_1)\beta_{vh}(\eta_hE^{*}_h+I^{*}_h)}{N^{*}_{h}}
\end{equation}
be the forces of infection of humans and vectors at steady state, respectively. Solving
the equations in \eqref{AR3} at steady state gives
\begin{equation}
\label{EEh}
\begin{array}{l}
S^{*}_h=\dfrac{\Lambda_h(\pi\lambda^{c,*}_h+k_2)}{\mu_h(k_2+\xi)+\lambda^{c,*}_{h}(\pi\lambda^{c,*}_{h}+\pi k_1+k_2)},\;\;\;V^{*}_h=\dfrac{\xi S^{*}_h}{(\pi\lambda^{c,*}_{h}+k_2)},\\
E^{*}_h=\dfrac{\lambda^{c,*}_{h}(S^{*}_h+\pi V^{*}_h)}{k_3},\;\;I^{*}_h=\dfrac{\gamma_h\lambda^{c,*}_{h}(S^{*}_h+\pi V^{*}_h)}{k_3k_4},\;\;R^{*}_h=\dfrac{\sigma\gamma_h\lambda^{c,*}_{h}(S^{*}_h+\pi V^{*}_h)}{\mu_hk_3k_4},
\end{array}
\end{equation}
and
\begin{equation}
\label{EEv}
\begin{array}{l}
S^{*}_v=\dfrac{\theta P}{(\lambda^{c,*}_v+k_8)},\,\;\; E^{*}_v=\dfrac{\theta P\lambda^{c,*}_v}{k_9(\lambda^{c,*}_v+k_8)},\;\;I^{*}_v=\dfrac{\gamma_v\theta P\lambda^{c,*}_v}{k_8k_9(\lambda^{c,*}_v+k_8)},\\
E=\dfrac{\mu_b\theta K_EP}{(k_5k_8K_E+\mu_b\theta P)},\;\;
L=\dfrac{\mu_b\theta sK_EK_LP}{k_6K_L(k_5k_8K_E+\mu_b\theta P)+s\mu_b\theta K_EP},
\end{array}
\end{equation}
where $P$ is solution of the following equation
\begin{equation}
\label{eggs}
f(P)=-k_7P\left[\mu_b\theta(sK_E+k_6K_L)P+k_5k_6k_8K_EK_L(\mathcal{N}-1)\right]=0
\end{equation}
A direct resolution of the above equation give $P=0$ or $P=\dfrac{k_5k_6k_8K_EK_L(\mathcal{N}-1)}{\mu_b\theta(sK_E+k_6K_L)}$.

Note that $P=0$ corresponds to the trivial equilibrium $\mathcal{E}_{0}$. Now we consider $P>0$ i.e. $\mathcal{N}>1$. Replacing \eqref{EEh} and \eqref{EEv} in \eqref{fieear3} give
\begin{equation}
\label{fiheear3}
\begin{split}
\lambda^{c,*}_h&=\dfrac{a(1-\alpha_1)\beta_{hv}\mu_h}{\Lambda_h}\left(\eta_v\dfrac{\theta P\lambda^{*}_v}{k_9(\lambda^{*}_v+k_8)}+\dfrac{\gamma_v\theta P\lambda^{*}_v}{k_8k_9(\lambda^{*}_v+k_8)}\right) \\
\end{split}
\end{equation}
\begin{equation}
\label{fiveear3}
\begin{split}
\lambda^{c,*}_v&=\dfrac{a(1-\alpha_1)\beta_{vh}\mu_h}{\Lambda_{h}}\left(\eta_h\dfrac{\lambda^{*}_{h}(S^{*}_h+\pi V^{*}_h)}{k_3}+\dfrac{\gamma_h\lambda^{*}_{h}(S^{*}_h+\pi V^{*}_h)}{k_3k_4}\right) \\
\end{split}
\end{equation}
Substuting \eqref{fiveear3} in \eqref{fiheear3} give
\begin{equation}
\label{polyeeAr3}
\left(k_{6}K_{L}+sK_{E}\right)\lambda^{*}_h\left[ a_2(\lambda^{*}_h)^2+a_1\lambda^{*}_h+a_0\right]=0
\end{equation}
where $a_2$, $a_1$ and $a_0$ are given by
\begin{equation}
\label{varintereear3}
\begin{array}{l}
R_b=\dfrac{(\pi\xi+k_2)}{\pi(\xi+k_2)}\left( \dfrac{(k_{1}\pi+k_{2})}{\mu_{h}}+\dfrac{a(1-\alpha_1)\beta_{vh}(\gamma_{h}+k_{4}\eta_{h})(\pi\xi+k_2)}{k_{3}k_{4}k_{8}}\right) ,\\
a_2=\left(a(1-\alpha_1)\beta_{vh}\mu_{h}(\gamma_{h}+k_{4}\eta_{h})+k_{3}k_{4}k_{8}\right)k_{9}\mu_{b}\Lambda_{h}\pi,\\
a_1=\dfrac{k_{3}k_{4}k_{8}k_{9}\mu_{b}\Lambda_{h}\left(\xi+k_{2}\right)\mu_{h}\pi}
{\left(\pi\xi+k_{2}\right)}(R_b-R_1),\\
a_0=\mu_hk_{3}k_{4}k_{8}k_{9}\mu_{b}\Lambda_{h}(\xi+k_{2})\left(1-R_1\right).
\end{array}
\end{equation}
The trivial solution $\lambda^{*}_h=0$ of \eqref{polyeeAr3} corresponds to the disease--free equilibrium $\mathcal{E}_{1}$. Now, we just look the equilibria when $\lambda^{*}_h>0$. Note that coefficient $a_2$ is always positive and $a_0$ is less (resp. greather) than unity if and only if $R_1>1$ (resp.  $R_1<1$). Thus model system \eqref{AR3}, in absence of disease--induced death in human population ($\delta=0$), admits only one endemic equilibrium whenever $R_0>1$. Since the sign of coefficient $a_1$ depend of the value of parameter, we investigate the possibility of occurence of backward bifurcation phenomenon when $R_0<1$. 
Furthermore, consider the inequality 
\begin{equation}
\label{condi1ar3}
R_1\leq R_{b}.
\end{equation}
Since $a_2$ is always positive and $a_0$ is always positive whenever $R_0<1$, then, the occurence of backward bifurcation phenomenon depend of the sign of coefficient $a_1$. The coefficent $a_1$ is always positive if and only if condition \eqref{condi1ar3} holds (i.e $R_1<R_b$). It follows that the disease--free equilibrium is the unique equilibrium when $\mathcal{N}>1$ and $R_0<1$. Now if $R_b<R_1<1$, then in addition to the DFE  $\mathcal{E}_1$, there exists two endemic equilibria whenever $\Delta=a^{2}_1-4a_2a_0>0$. However, $R_b<R_1<1\Rightarrow R_b<1\Leftrightarrow \beta_{vh}<- \dfrac{\left[\pi^2\xi^2+\left(\mu_{h}\pi^2+\left(2\omega+\mu_{h}\right)\pi\right)\xi
+(\omega+\mu_{h})^2\right]k_{3}k_{4}k_{8}}{a(1-\alpha_1)\mu_{h}(\pi\xi+k_2)^{2}(\gamma_{h}+k_{4}\eta_{h})} <0$. Since all parameter of model \eqref{AR3} are nonnegative, we conclude that the condition $R_b<R_1<1$ does not hold.  And thus, the backward bifurcation never occurs in the absence of disease--induced death in human.
\paragraph{(ii)} Presence of disease induced death in human ($\delta\neq 0$).
In this case, we have $N^{*}_h=\dfrac{\Lambda_h-\delta I^{*}_h}{\mu_h}$. Applying the same procedure as case  (i), we obtain that $\lambda^{*}_h$ at steady state is solution of the following equation
\begin{equation}
\label{eefullAr3}
f(\lambda^{*}_h)=\lambda^{*}_h\left[c_4(\lambda^{*}_h)^{4}+c_3(\lambda^{*}_h)^{3}+c_2(\lambda^{*}_h)^{2}
+c_1\lambda^{*}_h+c_0 \right]=0,
\end{equation} 
where
\[
\begin{split}
c_4&=-\pi^2k_{9}K_{12}\mu_{b}\Lambda_{h}\left(k_{3}k_{4}-\delta\gamma_{h}\right)
\left(k_{10}a\mu_{h}(1-\alpha_1)\beta_{vh}
+k_{8}(k_{3}k_{4}-\delta\gamma_{h})\right),
\end{split}
\]
{\footnotesize
\[
\begin{split}
c_3&=\pi(k_{3}k_{4}k_{5}k_{6}k_{10}k_{11}a^2\mu_{h}^2(1-\alpha_1)^{2}\beta_{hv}n\pi\beta_{vh}K_{E}K_{L}
+2k_{9}k_{10}K_{12}a\mu_{b}\delta\Lambda_{h}\mu_{h}\gamma_{h}\pi(1-\alpha_1)\beta_{vh}\xi\\
&-k_{3}k_{4}k_{9}k_{10}K_{12}a\mu_{b}\Lambda_{h}\mu_{h}\pi(1-\alpha_1)\beta_{vh}\xi
-2k_{8}k_{9}K_{12}\mu_{b}\delta^2\Lambda_{h}\gamma_{h}^2\pi\xi
+2k_{3}k_{4}k_{8}k_{9}K_{12}\mu_{b}\delta\Lambda_{h}\gamma_{h}\pi\xi\\
&-k_{1}k_{3}k_{4}k_{9}k_{10}K_{12}a\mu_{b}\Lambda_{h}\mu_{h}\pi(1-\alpha_1)\beta_{vh}+2k_{2}k_{9}k_{10}
K_{12}a\mu_{b}\delta\Lambda_{h}\mu_{h}\gamma_{h}(1-\alpha_1)\beta_{vh}\\
& -2k_{2}k_{3}k_{4}k_{9}k_{10}K_{12}a\mu_{b}\Lambda_{h}\mu_{h}(1-\alpha_1)\beta_{vh}
+2k_{1}k_{3}k_{4}k_{8}k_{9}K_{12}\mu_{b}\delta\Lambda_{h}\gamma_{h}\pi-2k_{1} k_{3}^2k_{4}^2k_{8}k_{9}K_{12}\mu_{b}\Lambda_{h}\pi\\
&-2k_{2}k_{8}k_{9}K_{12}\mu_{b}\delta^2\Lambda_{h}\gamma_{h}^2
+4k_{2}k_{3}k_{4}k_{8}k_{9}K_{12}\mu_{b}\delta\Lambda_{h}\gamma_{h}-2k_{2}k_{3}^2k_{4}^2k_{8}k_{9}K_{12}\mu_{b}\Lambda_{h}),
\end{split}
\]
}
{\scriptsize
\[
\begin{split}
c_2&=k_{3}k_{4}k_{5}k_{6}k_{10}k_{11}a^2\mu_{h}^2(1-\alpha_1)^{2}\beta_{hv}n\pi^2\beta_{vh}\xi K_{E}K_{L}\\ &+k_{1}k_{3}k_{4}k_{5}k_{6}k_{10}k_{11}a^2\mu_{h}^2(1-\alpha_1)^{2}\beta_{hv}n\pi^2\beta_{vh}K_{E}K_{L}\\
&+2k_{2}k_{3}k_{4}k_{5}k_{6}k_{10}k_{11}a^2\mu_{h}^2(1-\alpha_1)^{2}\beta_{hv}n\pi\beta_{vh}K_{E}K_{L}
+k_{9}k_{10}K_{12}a\mu_{b}\delta\Lambda_{h}\mu_{h}\gamma_{h}\pi^2(1-\alpha_1)\beta_{vh}\xi^2\\
&-k_{8}k_{9}K_{12}\mu_{b}\delta^2\Lambda_{h}\gamma_{h}^2\pi^2\xi^2
-k_{1}k_{3}k_{4}k_{9}k_{10}K_{12}a\mu_{b}\Lambda_{h}\mu_{h}\pi^2(1-\alpha_1)\beta_{vh}\xi\\
&+k_{3}k_{4}k_{9}
 k_{10}K_{12}a\mu_{b}\Lambda_{h}\mu_{h}\omega\pi (1-\alpha_1)\beta_{vh}\xi
+2k_{2}k_{9}k_{10}K_{12}a\mu_{b}\delta\Lambda_{h}\mu_{h}\gamma_{h}\pi(1-\alpha_1)\beta_{vh}\xi\\
&-k_{2}k_{3}k_{4}k_{9}k_{10}K_{12}a\mu_{b}\Lambda_{h}\mu_{h}\pi(1-\alpha_1)\beta_{vh}\xi
+2k_{1}k_{3}k_{4}k_{8}k_{9}K_{12}\mu_{b}\delta\Lambda_{h}\gamma_{h}\pi^2\xi\\
&-2k_{3}k_{4}k_{8}k_{9}K_{12}\mu_{b}\delta\Lambda_{h}\gamma_{h}\omega\pi\xi
+2k_{3}^2k_{4}^2k_{8}k_{9}K_{12}\mu_{b}\Lambda_{h}\omega\pi\xi
-2k_{2}k_{8}k_{9}K_{12}\mu_{b}\delta^2\Lambda_{h}\gamma_{h}^2\pi\xi\\
&+2k_{2}k_{3}k_{4}k_{8}k_{9}K_{12}\mu_{b}\delta\Lambda_{h}\gamma_{h}\pi\xi
-2k_{1}k_{2}k_{3}k_{4}k_{9}k_{10}K_{12}a\mu_{b}\Lambda_{h}\mu_{h}(1-\alpha_1)\pi\beta_{vh}\\
&+k_{2}^2k_{9}k_{10}K_{12}a\mu_{b}\delta\Lambda_{h}\mu_{h}\gamma_{h}(1-\alpha_1)\beta_{vh}
-k_{2}^2k_{3}k_{4}k_{9}k_{10}K_{12}a\mu_{b}\Lambda_{h}\mu_{h}(1-\alpha_1)\beta_{vh}\\
&-k_{1}^2k_{3}^2k_{4}^2k_{8}k_{9}K_{12}\mu_{b}\Lambda_{h}\pi^2 +4k_{1}k_{2}k_{3}k_{4}k_{8}k_{9}K_{12}\mu_{b}\delta\Lambda_{h}
 \gamma_{h}\pi-4k_{1}k_{2}k_{3}^2k_{4}^2k_{8}k_{9}K_{12}\mu_{b}\Lambda_{h}\pi\\
&-k_{2}^2k_{8}k_{9}K_{12}\mu_{b}\delta^2\Lambda_{h}\gamma_{h}^2 +2k_{2}^2k_{3}k_{4}k_{8}k_{9}K_{12}\mu_{b}\delta\Lambda_{h}\gamma_{h}
 -k_{2}^2k_{3}^2k_{4}^2k_{8}k_{9}K_{12}\mu_{b}\Lambda_{h},
\end{split}
\]
}
{\footnotesize
\[
\begin{split}
c_1&=((k_{1}k_{3}k_{4}k_{5}k_{6}k_{10}k_{11}a^2\mu_{h}^2(1-\alpha_1)\beta_{hv}n\pi^2 +k_{3}k_{4}k_{5}k_{6}k_{10}k_{11}a^2\mu_{h}^2(1-\alpha_1)^2\beta_{hv}n(k_{2}-\omega)\pi)\beta_{vh}\xi\\
&+(2k_{1}k_{2}k_{3}k_{4}k_{5}k_{6}k_{10}k_{11}a^2\mu_{h}^2(1-\alpha_1)\beta_{hv}n\pi +k_{2}^2k_{3}k_{4}k_{5}k_{6}k_{10}k_{11}a^2\mu_{h}^2(1-\alpha_1)\beta_{hv}n)(1-\alpha_1)\beta_{vh})K_{E}K_{L}\\
&+(k_{3}k_{4}k_{9}k_{10}K_{12}a\mu_{b}\Lambda_{h}\mu_{h}\omega\pi(1-\alpha_1)\beta_{vh}
 -2k_{3}k_{4}k_{8}k_{9}K_{12}\mu_{b}\delta\Lambda_{h}\gamma_{h}\omega\pi)\xi^2\\
& +((k_{2}k_{3}k_{4}k_{9}k_{10}K_{12}a\mu_{b}\Lambda_{h}\mu_{h}\omega-k_{1}k_{2}k_{3}
 k_{4}k_{9}k_{10}K_{12}a\mu_{b}\Lambda_{h}\mu_{h}\pi)(1-\alpha_1)\beta_{vh}\\
& +(2k_{1}k_{3}^2k_{4}^2k_{8}k_{9}K_{12}\mu_{b}\Lambda_{h}\omega+2k_{1}k_{2}k_{3}k
 _{4}k_{8}k_{9}K_{12}\mu_{b}\delta\Lambda_{h}\gamma_{h})\pi\\
& +(2k_{2}k_{3}^2k_{4}^2k_{8}k_{9}K_{12}\mu_{b}\Lambda_{h}-2k_{2}k_{3}k_{4}k_{8}k_{9}K_{12}
 \mu_{b}\delta\Lambda_{h}\gamma_{h})\omega)\xi\\
&-k_{1}k_{2}^2k_{3}k_{4}k_{9}k_{10}K_{12}a\mu_{b}\Lambda_{h}\mu_{h}(1-\alpha_1)\beta_{vh} -2k_{1}^2k_{2}k_{3}^2k_{4}^2 k_{8}k_{9}K_{12}\mu_{b}\Lambda_{h}\pi\\
&+2k_{1}k_{2}^2k_{3}k_{4}k_{8}k_{9}K_{12}\mu_{b}\delta\Lambda_{h}\gamma_{h} -2k_{1}k_{2}^2k_{3}^2k_{4}^2k_{8}k_{9}K_{12}\mu_{b}\Lambda_{h},
\end{split}
\]
}

\[
\begin{split}
c_0&=k^{2}_{3}k^{2}_{4}k_{8}k_{9}K_{12}\mu_{b}\Lambda_{h}\mu^{2}_h(k_{2}+\xi)^{2}
\left(R^{2}_0-1\right),
\end{split}
\]
with $k_{10}=\gamma_h+\eta_hk_4$, $k_{11}=\gamma_v+\eta_vk_8$, $K_{12}=(sK_E+k_6K_L)$ and $n=\mathcal{N}-1$.
 Notes that $c_4$ is always negative and $c_0$ is positive (resp. negative) if $R_0$ is greather (resp. less) that the unity. It follows, depending of the sign of coefficients $c_3$, $c_2$ and $c_1$, that the model system \eqref{AR3} admits at least one endemic equilibrium whenever $R_0>1$ and the phenomenon of backward (resp. forward) bifurcation can occurs when $R_0<1$ (resp. $R_0>1$).  This ends the proof.\hfill

%\section*{References}

\bibliography{mybibfile}

\begin{thebibliography}{10}
\expandafter\ifx\csname url\endcsname\relax
  \def\url#1{\texttt{#1}}\fi
\expandafter\ifx\csname urlprefix\endcsname\relax\def\urlprefix{URL }\fi
\expandafter\ifx\csname href\endcsname\relax
  \def\href#1#2{#2} \def\path#1{#1}\fi

\bibitem{ch}
A.~Chippaux, G\'en\'eralit\'es sur arbovirus et arboviroses overview of
  arbovirus and arbovirosis, Med. Maladies Infect. 33 (2003) 377--384.

\bibitem{ka}
N.~Karabatsos, International Catalogue of Arboviruses, including certain other
  viruses of vertebrates, American Society of Tropical Medicine and Hygiene,
  2001 update.

\bibitem{gu}
D.~J. Gubler, Human arbovirus infections worldwide, Ann. N. Y. Acad. Sci. 951
  (2001) 13--24.

\bibitem{Sanofi2013}
S.~PASTEUR, Dengue vaccine, a priority for global health.

\bibitem{WHO2006}
{World Health Organization}, Dengue and severe dengue, Fact sheet n.117.

\bibitem{WHO2009}
{World Health Organization}, Dengue and dengue haemorhagic fever, Fact sheet
  n.117.

\bibitem{Villar2015}
L.~Villar, M.~D. G.~H. Dayan, {\it al}, Efficacy of a tetravalent dengue
  vaccine in children in latin america, The New England Journal of Medicine
  372~(2).

\bibitem{Arunee2012}
A.~Sabchareon, D.~Wallace, C.~Sirivichayakul, {et \it al}., Protective efficacy
  of the recombinant, live-attenuated, cyd tetravalent dengue vaccine in thai
  schoolchildren: a randomised, controlled phase 2b trial, Lancet 380 (2012)
  1559--67.

\bibitem{Aldila2013}
{Dipo Aldila}, {Thomas G{\"o}tz}, {Edy Soewono}, An optimal control problem
  arising from a dengue disease transmission model, Mathematical Biosciences
  242 (2013) 9--16.

\bibitem{Antonio2001}
M.~Antonio, T.~Yoneyama, Optimal and sub-optimal control in dengue epidemics,
  Optim. Control Appl. Methods 22 63.

\bibitem{caga}
J.~R. Cannon, D.~J. Galiffa, An epidemiology model suggested by yellow fever,
  Math. Methods Appl. Sci. 35 (2012) 196--206.

\bibitem{coutinho2006}
F.~A.~B. Coutinho, M.~N. Burattini, L.~F. Lopez, E.~Massad, Threshold
  conditions for a non-autonomous epidemic system describing the population
  dynamics of dengue, Bulletin of Mathematical Biology 68 (2006) 2263--2282.

\bibitem{cretal}
G.~Cruz-Pacheco, L.~Esteva, C.~Vargas, Seasonality and outbreaks in west nile
  virus infection, Bull. Math. Biol. 71 (2009) 1378--1393.

\bibitem{Derouich2006}
M.~Derouich, A.~Boutayeb, Dengue fever: mathematical modelling and computer
  simulation, Applied Mathematics and Computation 177 2 (2006) 528--544.

\bibitem{duch}
Y.~Dumont, F.~Chiroleu, Vector control for the chikungunya disease, Math.
  Biosci. Eng. 7 (2010) 313--345.

\bibitem{esva98}
L.~Esteva, C.~Vargas, Analysis of a dengue disease transmission model, Math.
  Biosci. 150 (1998) 131--151.

\bibitem{esva99}
L.~Esteva, C.~Vargas, A model for dengue disease with variable human
  population, J. Math. Biol. 38 (1999) 220--240.

\bibitem{fevh}
Z.~Feng, V.~Velasco-Hernadez, Competitive exclusion in a vector--host model for
  the dengue fever, J. Math. Biol. 35 (1997) 523--544.

\bibitem{gaguab}
S.~M. Garba, A.~B. Gumel, M.~R.~A. Bakar, Backward bifurcations in dengue
  transmission dynamics, Math. Biosci. 215 (2008) 11--25.

\bibitem{HelenaSofiaRodrigues2014}
H.~S. Rodrigues, M.~T.~T. Monteiro, D.~F.~M. Torres, Vaccination models and
  optimal control strategies to dengue, Mathematical Biosciences 247 (2014)
  1--12.

\bibitem{BlaynehaetAl}
K.~W. Blayneha, A.~B. Gumel, S.~Lenhart, T.~Clayton, Backward bifurcation and
  optimal control in transmission dynamics of west nile virus, Bulletin of
  Mathematical Biology 72 (2010) 1006--1028.
\newblock \href {http://dx.doi.org/10.1007/s11538-009-9480-0}
  {\path{doi:10.1007/s11538-009-9480-0}}.

\bibitem{maya}
N.~A. Maidana, H.~M. Yang, Dynamic of west nile virus transmission considering
  several coexisting avian populations, Math. Comput. Modelling 53 (2011)
  1247--1260.

\bibitem{moaaca}
D.~Moulay, M.~A. Aziz-Alaoui, M.~Cadivel, The chikungunya disease: Modeling,
  vector and transmission global dynamics, Math. Biosci. 229 (2011) 50--63.

\bibitem{moaaHee2012}
D.~Moulay, M.~A. Aziz-Alaoui, K.~Hee-Dae, Optimal control of chikungunya
  disease: larvae reduction,treatment and prevention, Mathemtical Biosciences
  and Engineering 9~(2).

\bibitem{poetal}
P.~Poletti, G.~Messeri, M.~Ajelli, R.~Vallorani, C.~Rizzo, S.~Merler,
  Transmission potential of chikungunya virus and control measures: the case of
  italy, PLoS One 6~(e18860).

\bibitem{AbbouEtAl2015}
{Hamadjam Abboubakar}, {Jean C. Kamgang}, {L\'eontine N. Nkamba}, {Daniel
  Tieudjo}, {Lucas Emini}, Modeling the dynamics of arboviral diseases with
  vaccination perspective, Biomath 4 (2015).

\bibitem{Figaro}
{Le Figaro}, Un vaccin contre la dengue disponible d\`es la {mi-2015},
  \url{http://www.lefigaro.fr/societes/2014/11/04/20005-
  20141104ARTFIG00301-un-vaccin-contre-la-dengue-disponible-des-la-mi-2015.php}
  (Accessed April 2015).

\bibitem{Scott2010}
T.~W. Scott, A.~C. Morrison, Vector dynamics and transmission of dengue virus:
  implications for dengue surveillance and prevention strategies: vector
  dynamics and dengue prevention, Current Topics in Microbiology and Immunology
  338 (2010) 115--128.

\bibitem{http}
\url{http://www.cg06.fr/fr/servir-les-habitants/action-medicale-sociale/
  votre-sante/lutte-contre-le-moustique-aedes-albopictus/lutte-contre-l}.

\bibitem{Berman}
A.~Berman, R.~J. Plemons, Nonnegative matrices in the mathematical sciences,
  1994.

\bibitem{Jacquez}
J.~A. Jacquez, Qualitative theory of compartmental systems, SIAM Rev. 35 (1993)
  43--79.

\bibitem{Cushing1998}
J.~M. Cushing, An Introduction to Structured Population Dynamics, SIAM,
  Philadelphia, 1998.

\bibitem{CushingetAl1994}
J.~M. Cushing, Z.~Yicang, The net reproductive value and stability in matrix
  population models, Nat. Resour. Model. 8 (1994) 297--333.

\bibitem{dihe}
O.~Diekmann, J.~A.~P. Heesterbeek, Mathematical Epidemiology of Infectious
  Diseases. Model building, analysis and interpretation, John Wiley \& Sons,
  Chichester, 2000.

\bibitem{vawa02}
P.~{van den Driessche}, J.~Watmough, Reproduction numbers and the sub-threshold
  endemic equilibria for compartmental models of disease transmission, Math.
  Biosci. 180 (2002) 29--48.

\bibitem{cretal2005}
G.~Cruz-Pacheco, L.~Esteva, J.~A. Monta$\tilde{n}$o-Hirose, C.~Vargas,
  Modelling the dynamics of west nile virus, Bulletin of Mathematical Biology
  67 (2005) 1157--1172.

\bibitem{JCK2008}
J.~C. Kamgang, G.~Sallet, Computation of threshold conditions for
  epidemiological models and global stability of the disease-free equilibrium
  (dfe), Mathematical Biosciences 213 (2008) 1--12.

\bibitem{arino2003}
J.~Arino, C.~C. McCluskey, P.~{van den Driessche}, Global results for an
  epidemic model with vaccination that exhibits backward bifurcation, SIAM
  Journal on Applied Mathematics 64 (2003) 260--276.

\bibitem{br}
F.~Brauer, Backward bifurcations in simple vaccination models, J. Math. Anal.
  Appl. 298 (2004) 418--431.

\bibitem{SharomietAl2007}
O.~Sharomi, C.~Podder, A.~Gumel, E.~Elbasha, J.~Watmough, Role of incidence
  function in vaccine-induced backward bifurcation in some hiv models,
  Mathematical Biosciences 210 (2007) 436--463.

\bibitem{bbnamc2011}
B.~Buonomo, D.~Lacitignola, On the backward bifurcation of a vaccination model
  with nonlinear incidence, Nonlinear Analysis: Modelling and Control 16~(1)
  (2011) 30--46.

\bibitem{bbnamc}
B.~Buonomo, A note on the direction of the transcritical bifurcation in
  epidemic models, Nonlinear Analysis: Modelling and Control 20 (2015) 38--55.

\bibitem{duhucc}
J.~Dushoff, W.~Huang, C.~Castillo-Chavez, Backward bifurcations and catastrophe
  in simple models of fatal diseases, J. Math. Biol. 36 (1998) 227--248.

\bibitem{saetal}
M.~Safan, M.~Kretzschmar, K.~P. Hadeler, Vaccination based control of
  infections in sirs models with reinfection: special reference to pertussis,
  J. Math. Biol. 67 (2013) 1083--1110.

\bibitem{ccso}
C.~Castillo-Chavez, B.~Song, Dynamical models of tuberculosis and their
  applications, Math. Biosci. Eng. 1 (2004) 361--404.

\bibitem{guho}
J.~Guckenheimer, P.~Holmes, Dynamical Systems and Bifurcations of Vector
  Fields, Nonlinear Oscillations, 1983.

\bibitem{Carr}
J.~Carr, Applications of Centre Manifold Theory, Springer, New York, 1981.

\bibitem{invariance}
J.~K. Hale, Ordinary Differential Equations, John Wiley and Sons, 1969.

\bibitem{14}
J.~P. LaSalle, Stability theory for ordinary differential equations, J. Differ.
  Equ. (1968) 57--65.

\bibitem{13}
J.~P. LaSalle, The stability of dynamical systems, Society for Industrial and
  Applied Mathematics, 1976.

\bibitem{chhycu}
N.~Chitnis, J.~M. Hyman, J.~M. Cushing, Determining important parameters in the
  spread of malaria through the sensitivity analysis of a mathematical model,
  Bull. Math. Biol. 70 (2008) 1272--1296.

\bibitem{Marino2008}
S.~Marino, I.~B. Hogue, C.~J. Ray, , D.~E. Kirschner, A methodology for
  performing global uncertainty and sensitivity analysis in systems biology,
  Journal of Theoretical Biology 254 (2008) 178--196.

\bibitem{Wu2013}
J.~Wu, R.~Dhingra, M.~Gambhir, J.~V. Remais, Sensitivity analysis of infectious
  disease models: methods, advances and their application, Journal of the Royal
  Society Interface 10.

\bibitem{Stein1987}
M.~Stein, Large sample properties of simulations using latin hypercube
  sampling, Technometrics 29 (1987) 143--151.

\bibitem{IRD}
F.~Darriet, S.~Marcombe, V.~Corbel, Insecticides larvicides et adulticides
  alternatifs pour les op\'erations de d\'emoustication en france, synth\`ese
  bibliographique, IRD.

\bibitem{Bosc}
P.~Bosc, V.~Boullet, M.~Echaubard, M.~L. Corre, S.~Quilici, J.~P. Quod,
  J.~Rochat, S.~Ribes, M.~Salamolard, E.~Thybaud, Premier Bilan sur les Impacts
  des Traitements Anti-moustiques, dans le cadre de la lutte Contre le
  Chikungunya, sur les Esp\`eces et les Milieux de l'\^ile de la R\'eunion
  (Juin 2006).

\bibitem{Licciardi}
S.~Licciardi, personnal communication.

\end{thebibliography}
%\end{linenumbers}
\end{document}